\let\oldlb\(
\let\oldrb\)
\renewcommand{\(}{\begin{dmath*}}
\renewcommand{\)}{\end{dmath*}}
\newtheorem{theorem}{Theorem}[section]
\newtheorem{proposition}[theorem]{Proposition}
\newtheorem{corollary}[theorem]{Corollary}
\newtheorem{definition}[theorem]{Definition}
\newtheorem{example}[theorem]{Example}
\newtheorem{question}[theorem]{Question}
\theoremstyle{definition}
\newtheorem{remark}{Remark}
\newcommand{\N}{\mathbb{N}}
\newcommand{\R}{\mathbb{R}}
\newcommand{\Z}{\mathbb{Z}}
\newcommand{\SE}{\mathbb{S}}
\newcommand{\A}{\mathcal{A}}
\newcommand{\U}{\mathcal{U}}
\newcommand{\upperent}{\overline{\operatorname{ent}}}
\newcommand{\lowerent}{\underline{\operatorname{ent}}}
\newcommand{\sep}{\operatorname{sep}}
\newcommand{\spa}{\operatorname{spa}}
\title{Topological slow entropy, sequence entropy, and generalized $[T,T^{-1}]$ systems}
\author{Nicanor Carrasco-Vargas}
\thanks{\textit{Address.} Faculty of Mathematics and Computer Science, Jagiellonian University, Krak\'ow, Poland.}
\thanks{\textit{Email.} \texttt{nicanor.vargas@uj.edu.pl}}
\keywords{Slow entropy, sequence entropy, variational principle, skew products, $[T,T^{-1}]$ systems}
\subjclass{37B40, 37A35, 37B02}
\begin{document}
\maketitle
\begin{abstract}
We consider topological dynamical systems given by skew products $S\rtimes_{\tau} T$, where $S\colon Y\to Y$ is a subshift, $\tau\colon Y\to\Z$ is a continuous cocycle, and $T$ is an arbitrary invertible topological system. For fixed $(Y,S,\tau)$ it may happen that all systems of the form $S\rtimes_\tau T$ have the same topological entropy, and thus it arises the problem of distinguishing two such systems.

We show that if $T_1$ and $T_2$ are invertible topological dynamical systems with different topological entropy then $S\rtimes_{\tau} T_1$ and $S\rtimes_{\tau} T_1$ can be distinguished using slow entropy as introduced by Katok and Thouvenot. We prove a similar result under the assumption that the fiber systems have different slow entropy at some scale (this can be applied if $T_1$ and $T_2$ have both zero entropy, or have the same entropy). These results 
require rather mild assumptions on $(Y,S,\tau)$, and can be applied to some entropy-zero systems in the base. 

We generalize classical results in the theory of sequence entropy, which were proved by Goodman with  the additional assumption of finite topological dimension. We show that the measure-theoretic sequence entropy of any system is bounded by its topological sequence entropy. Under an extra assumption on the sequence we establish a variational principle, and prove that the topological sequence entropy of a system equals its topological entropy multiplied by a positive constant that depends only on the sequence. 
\end{abstract}
\section{Introduction}
Let $S$ and $T$ be measure preserving transformations of the probability spaces  $(Y,\mathcal C,\nu)$ and $(X,\mathcal B,\mu)$ respectively. We require $T$ to be invertible. Let  $\tau\colon Y\to\Z$ be a measurable function, and define a new  measure preserving transformation $S\rtimes_\tau T$ acting on  $(Y\times X,\mathcal C\times \mathcal B,\nu\times\mu)$  by 
\begin{equation}\label{TTmenos1}(y,x)\mapsto (S(y),T^{\tau(y)}(x)).\end{equation}
The transformation  $S\rtimes_\tau T$ is usually called generalized $[T,T^{-1}]$ system with base $S$ and fiber $T$.  The classical $[T,T^{-1}]$ system is obtained by taking $T$ and $S$ as the Bernoulli shift over $\{-1,1\}^\Z$, with the uniform measure, and $\tau$  given by $\tau(y)=y(0)$ for all $y\in Y$. The space consists of pairs $(y,x)$, $x,y\in \{-1,1\}^\Z$. The transformation shifts  $y$, and applies either the shift or the inverse of the shift to $x$, depending on whether $y(0)$ equals $1$ or $-1$. In the famous work \cite{kalikow_t_1982} Kalikow proved that this system is not loosely Bernoulli despite being $K$, and it is generally regarded as the first natural system with this property.  Generalized $[T,T^{-1}]$ systems have been subsequently studied by numerous authors \cite{katok_smooth_nodate,rudolph_asymptotically_1988,den_hollander_mixing_1997,hollander_random_2006,dolgopyat_mixing_2022, ball_entropy_2003, heicklen_entropy_2000, dolgopyat_flexibility_2022, austin_scenery_2014}. 

If we fix the tuple $(Y,\mathcal C,\nu,S,\tau)$ then we have a large family of systems of the form $S\rtimes_\tau T$, and we encounter the problem of distinguishing them. For instance if we take $S$ as the Bernoulli shift over $\{-1,1\}^\Z$ or $\{-1,1\}^\N$ and $\tau(y)=y(0)$, then Abramov and Rokhlin's formula \cite{abramov_entropy_1965} shows that $S\rtimes_\tau T$ has entropy $\log(2)$ for every choice of $(X,\mathcal B,\mu, T)$. Since entropy provides no information for this family of systems, the following question arises naturally:
\begin{question}\label{question}
Does an isomorphism between $S\rtimes_\tau T_1$ and $S\rtimes_\tau  T_2$  imply that $T_1$ and $T_2$ have the same measure theoretic entropy?
\end{question} 
In other words, this question asks whether the entropy of the fiber is an  invariant for isomorphism within the family of systems of the form $S\rtimes_\tau T$, for fixed $S$. This question was answered positively in \cite{heicklen_entropy_2000} in the case where $S$ is  the shift map over $\{-1,1\}^\N$ with the uniform measure and $\tau$ as the zero coordinate. This was later extended to the invertible case in \cite{austin_scenery_2014}. The results in \cite{austin_scenery_2014} are more general as they allow to consider a subshift of finite type in the base  under certain technical assumptions. Related works include \cite{aaronson_relative_2012,ball_entropy_2003}. Not much is known about \Cref{question} when the system in the base has low complexity or intermediate complexity. 

In this work we study the topological counterpart of $[T,T^{-1}]$ systems and the topological counterpart of \Cref{question}. Our setting is rather general in the sense that the system in the base may have zero or positive entropy and both situations are treated in a unified manner. 
\subsection{Results}
Consider a tuple $(Y,S,\tau)$, where $Y\subset \A^\Z$ is a subshift with finite alphabet $\A$, $S\colon Y\to Y$ is the shift transformation over $Y$, and $\tau\colon Y\to \Z$ is a continuous function (i.e. a continuous cocycle). For every invertible topological dynamical system $(X,T)$ we consider the continuous transformation $S\rtimes_\tau T$ over $Y\times X$ defined by \Cref{TTmenos1}. The analog of \Cref{question} asks the following.
\begin{question}\label{question-topological}
    Does a topological conjugacy between $S\rtimes_\tau T_1$ and $S\rtimes_\tau T_2$ imply that $T_1$ and $T_2$ have the same topological entropy?
\end{question}
In other words, this question asks whether the topological entropy of the fiber is a conjugacy invariant within the family of systems of the form $S\rtimes_\tau T$, for fixed $S$.
 
Our main result is a positive answer to \Cref{question-topological} 
under rather mild assumptions on $(Y,S,\tau)$. We show that in fact the topological entropy of $T$ equals the topological \textit{slow entropy} of $S\rtimes_\tau T$ at a suitable scale which only depends on $(Y,S,\tau)$. Let $\textbf{a}=\{a_n(t)\}_{n\in\N,t>0}$ be an arbitrary family of functions increasing to infinity and monotone in $t$,  we call it a scale. The upper topological slow entropy $\upperent_{\textbf{a}}(T)$ of a topological dynamical system $(X,T)$ with scale $\textbf{a}$ is a real-valued invariant of conjugacy. It is obtained by comparing $a_n(t)$ with the number of $n$-Bowen balls of certain radius that are needed to cover $X$. This invariant was first introduced in the measure theoretic setting by Katok and Thouvenot  \cite{AIHPB_1997__33_3_323_0}, see also the survey \cite{gogolev_survey_2024}. One similarly defines the lower slow entropy $\lowerent_{\textbf{a}}(T)$, which is not always equal to the upper slow entropy.

In the next result we assume that $\tau$ has a property which we call being \textit{$\lambda$-unbounded} for some $\lambda>0$ (\Cref{def:good}). This condition informally says that for all $N$ the function $n\mapsto\sum_{i=0}^{n-1}(S^{i}y)$ has at least $N$ different values in its range, for a positive ``proportion'' $\lambda$ of elements $y\in Y$. This is stated as a condition on the language of the subshift and makes no reference to a measure. 

\begin{theorem}\label{thm:entropy}
Let $(Y,S)$ be a subshift, and let $\tau\colon Y\to \Z$ be a continuous map which is $\lambda$-unbounded for some $\lambda>0$. Then there exists a scale $\textbf{a}=\{a_n(t)\}_{n\in\N,t>0}$ depending only on $(Y,S,\tau)$, and such that
\[\upperent_{\textbf{a}}(S\rtimes_\tau T)=\lowerent_{\textbf{a}}(S\rtimes_\tau T)=h_{top}(T)\]  for every invertible topological dynamical system $(X,T)$.
\end{theorem}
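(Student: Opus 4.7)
The plan is to read the scale $\textbf{a}$ directly off the Bowen separation structure of $S\rtimes_\tau T$. For $y\in Y$ set $\tau_k(y)=\sum_{i=0}^{k-1}\tau(S^i y)$, so that $(S\rtimes_\tau T)^k(y,x)=(S^k y, T^{\tau_k(y)}x)$, and let $I_n(y)=\{\tau_k(y):0\le k<n\}\subset\Z$ with window length $D_n(y)=\max I_n(y)-\min I_n(y)$. Since $\tau$ is locally constant on $Y$, for $\epsilon$ small enough two points in $Y\times X$ lying over distinct $n$-cylinders of $Y$ are automatically base-separated, while two points over the same cylinder $[w]$ are $(n,\epsilon)$-separated by $S\rtimes_\tau T$ iff the fiber coordinates are separated by $T$ at some time in $I_n(y_w)$. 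This produces the comparison
\[
\sep(S\rtimes_\tau T,n,\epsilon)\;\asymp\;\sum_{w\in L_n(Y)}\sep_T(I_n(y_w),\epsilon),
\]
where $L_n(Y)$ is the $n$-language of $Y$ and $y_w$ is a representative of $[w]$. Motivated by this, I define
\[
a_n(t)\;=\;\sum_{w\in L_n(Y)}\exp\bigl(t\cdot D_n(y_w)\bigr),
\]
which depends only on $(Y,S,\tau)$, is monotone in $t$, and tends to $\infty$ by the $\lambda$-unbounded hypothesis, which forces $D_n(y_w)\to\infty$ on a $\lambda$-fraction of words.

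For the upper bound $\upperent_{\textbf{a}}(S\rtimes_\tau T)\le h_{top}(T)$ the inclusion $I_n(y)\subset[\min I_n(y),\min I_n(y)+D_n(y)]$ dominates each fiber term, so $\sep_T(I_n(y),\epsilon)\le\sep_T(D_n(y)+1,\epsilon)\le\exp((h_{top}(T)+\delta)D_n(y))$ eventually, by the definition of topological entropy applied to $T$. Summing over $w$ gives $\sep(S\rtimes_\tau T,n,\epsilon)\le C(\epsilon)\cdot a_n(h_{top}(T)+\delta)$, hence $\upperent_{\textbf{a}}\le h_{top}(T)+\delta$ for every $\delta>0$. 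This direction works for every $T$ and requires nothing beyond the existence of the scale.

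The matching lower bound $\lowerent_{\textbf{a}}(S\rtimes_\tau T)\ge h_{top}(T)$ is the delicate step. I need $\sep_T(I_n(y_w),\epsilon)\gtrsim\exp((h_{top}(T)-\delta)D_n(y_w))$ for a positive proportion of $w$. The $\lambda$-unboundedness guarantees that for a $\lambda$-fraction of $w\in L_n(Y)$ the set $I_n(y_w)$ has many distinct values; combined with the fact that $\tau$ takes only finitely many integer values (say in $V\subset\Z$), a pigeonhole argument should extract from $I_n(y_w)$ a long arithmetic progression $\{a,a+g,\dots,a+(\ell-1)g\}$ with $g$ bounded in terms of $V$ and $\ell g\asymp D_n(y_w)$. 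Separation of $T$ along such a progression equals separation of $T^g$ of length $\ell$, and using $h_{top}(T^g)=g\cdot h_{top}(T)$ gives $\sep_T(I_n(y_w),\epsilon)\gtrsim\exp((h_{top}(T)-\delta)D_n(y_w))$. Summing over the good words and comparing to $a_n(h_{top}(T)-\delta)$ yields the lower bound; letting $\delta\to 0$ and combining with the upper bound gives the claimed equality.

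The principal obstacle will be this arithmetic-regularity step in the lower bound: converting the qualitative statement of $\lambda$-unboundedness (many distinct values in $I_n(y_w)$) into the quantitative existence of a long arithmetic progression inside $I_n(y_w)$ spanning most of $D_n(y_w)$, for a controlled positive proportion of words $w$. This is where the finite-range structure of $\tau$ must be exploited carefully, and where the precise formulation of the $\lambda$-unbounded condition does its work. A natural fallback, should the AP extraction be too rigid, is to replace $D_n(y_w)$ in the scale by the coarser statistic $R_n(y_w)=|I_n(y_w)|$, so that the hypothesis directly controls both bounds; this would require a parallel modification of the upper bound argument.
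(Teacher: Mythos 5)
Your scale is, up to an immaterial constant, the paper's: since the range $I_n(y_w)$ of the walk has gaps bounded by $m=\max_{y}|\tau(y)|$, the exponent the paper uses, $q_n(w)=|I_n(y_w)|\cdot C_m(I_n(y_w))=|I_n(y_w)+\{0,\dots,m-1\}|$, equals exactly $D_n(y_w)+m$, so your $a_n(t)$ and the paper's differ by the factor $e^{tm}$ and define the same slow entropy. Your word-by-word decomposition of $\sep(S\rtimes_\tau T,\{0,\dots,n-1\},\epsilon)$ is the paper's \Cref{prop:the-set-A}, and your upper bound follows the paper's route, with one caveat you gloss over: the bound $\spa(T,F,\epsilon)\le e^{(h_{top}(T)+\delta)|F\text{-diameter}|}$ only holds once the diameter exceeds a threshold $n_0(\epsilon,\delta)$, and words with small range persist for all $n$; controlling their total contribution $|L_n(Y)|\cdot N(\epsilon)$ against $a_n(t)$ is precisely where the $\lambda$-unbounded hypothesis enters the upper bound too, so it is not true that this direction ``requires nothing beyond the existence of the scale.''

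The genuine gap is the lower bound. You need $\spa(T,I_n(y_w),\epsilon)\ge e^{(h_{top}(T)-\delta)D_n(y_w)}$, and the AP-extraction strategy cannot produce it. First, a progression with $\ell g\asymp D_n$ is not enough: if it only spans a fraction $c<1$ of the diameter you only get $\lowerent_{\textbf{a}}\ge c\,h_{top}(T)$, so you would need $\ell g\ge(1-o(1))D_n$ with $g$ bounded. Second, such progressions need not exist: the range of a walk with steps in $\{-m,\dots,m\}$ can be an essentially arbitrary set with $m$-bounded gaps (e.g.\ pick one point from each block $\{3k,3k+1,3k+2\}$ according to a generic sequence; this is realized over a full shift, hence with a $1$-unbounded cocycle), and a generic such set contains no arithmetic progression of bounded common difference of length more than $O(\log D_n)$. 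The paper's resolution is not to find structure \emph{inside} $I_n(y_w)$ but to fatten the observable: choose a cover $\mathcal V$ with $N(\mathcal V^{\{0,\dots,k-1\}})\ge e^{k(h_{top}(T)-\delta)}$ for all $k$ and set $\U=\mathcal V^{\{0,\dots,m-1\}}$; then $\U^F=\mathcal V^{F+\{0,\dots,m-1\}}$, and $F+\{0,\dots,m-1\}$ is an \emph{interval} of length $D+m$ whenever $F$ has $m$-bounded gaps, whence $\spa(T,F,\epsilon)\ge N(\U^F)\ge e^{(D+m)(h_{top}(T)-\delta)}$ for $\epsilon$ below the Lebesgue number of $\U$ (the paper derives this from the Downarowicz--Frej--Romagnoli infimum rule, \Cref{downarowicz}, valid for arbitrary finite $F$; for gap-bounded $F$ submultiplicativity of $k\mapsto N(\mathcal V^{\{0,\dots,k-1\}})$ already suffices). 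Your proposed fallback of replacing $D_n$ by $|I_n|$ in the scale does not help: the required lower bound $\spa(T,F,\epsilon)\ge e^{(h_{top}(T)-\delta)|F|}$ for arbitrary finite $F$ is again the infimum rule rather than anything AP-based, and the upper bound then breaks, since for sparse $F$ one has $\spa(T,F,\epsilon)\approx e^{h_{top}(T)\,C_m(F)|F|}$ with $C_m(F)$ as large as $m$.
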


  Let us remark that \Cref{question-topological} is most interesting for a tuple $(Y,S,\tau)$ with the property that all the skew products $S\rtimes_\tau T$ have the same topological entropy as $S$. Informally speaking, this means that the number of $\epsilon,\{0,\dots,n-1\}$-Bowen balls associated to $S\rtimes_\tau T$ needed to cover $Y\times X$ can be estimated as
\[\spa(S\rtimes T,\{0,\dots,n-1\},\epsilon)\approx e^{nh_{top}(S)+f(n)h_{top}(T)}.\]
for a sublinear function $f(n)$. One intuitive interpretation of \Cref{thm:entropy} is that the scale $a_n(t)$ isolates the contribution of $T$ (or $h_{top}(T)$).

With the extra assumption that $\tau$ only attains values in $\{-1,0,1\}$ then we can prove a more general result, where the topological entropy of the fiber is replaced by slow entropy at an arbitrary scale. The main reason behind this assumption is that it guarantees that every set $\{\sum_{i=1}^k \tau(S^i y ) : k=1,\dots,n-1\}$, $y\in Y$, $n\in\N$ is in fact an interval, and this simplifies matters significantly (see \Cref{sec:estimates}).
\begin{theorem}\label{thm:slow-entropy}
Let $(Y,S)$ be a subshift, and let $\tau\colon Y\to \Z$ be a continuous map which is $\lambda$-unbounded for some $\lambda>0$. Suppose further that $\tau(y)\in\{-1,0,1\}$ for every $y\in Y$. Let $\textbf{b}=\{b_n(t)\}_{n\in\N,t>0}$ be an arbitrary scale. Then there exists a scale $\textbf{c}=\{c_n(t)\}_{n\in\N,t>0}$ such that
\[\upperent_{\textbf{c}}(S\rtimes_\tau T)\leq\upperent_{\textbf{b}}(T)\]  \[\lowerent_{\textbf{c}}(S\rtimes_\tau T)\geq \lowerent_{\textbf{b}}(T)\] 
for every invertible topological system $(X,T)$.
\end{theorem}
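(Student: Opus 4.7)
My plan is to follow the approach of \Cref{thm:entropy} but track slow entropy at the scale $\mathbf{b}$ rather than topological entropy. The essential structural input, and the role of the hypothesis $\tau(Y)\subseteq\{-1,0,1\}$, is that for every $y\in Y$ and $n\in\N$ the partial-sum set $K(y,n):=\{\sum_{i=0}^{j-1}\tau(S^iy):0\leq j\leq n-1\}$ is an interval of consecutive integers; I will write $\ell(y,n):=|K(y,n)|$ for its length. Consequently, covering $X$ by sets that separate $T$-orbits along $K(y,n)$ is, after shifting by a power of $T$, the same as an $(\ell(y,n),\epsilon)$-Bowen covering of $X$ for $T$.

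The main step will be to exhibit the Bowen balls of $S\rtimes_\tau T$ as products. Since $\tau$ is continuous on a subshift it depends on only finitely many coordinates, so there exists $r=r(Y,S,\tau)$ such that whenever $\epsilon<2^{-r}$ and $(y,x),(y_0,x_0)$ are $(n,\epsilon)$-Bowen close under $S\rtimes_\tau T$, one has $\tau(S^iy)=\tau(S^iy_0)$ for all $i<n$, so $K(y,n)=K(y_0,n)$ depends only on the cylinder $[w]$, $w\in\mathcal{L}_{n+2r}(Y)$, containing $y_0$. Denoting the common length by $\ell(w)$, the $X$-coordinate condition collapses to $x$ and $x_0$ being $(\ell(w),\epsilon)$-Bowen close under $T$ after a power-of-$T$ shift. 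Since the cylinders $\{[w]:w\in\mathcal{L}_{n+2r}(Y)\}$ partition $Y$ and optimal spanning sets decompose accordingly, I expect to obtain the two-sided bound
\[\sum_{w\in\mathcal{L}_{n+2r}(Y)}\spa(T,\ell(w),\epsilon)\;\leq\;\spa(S\rtimes_\tau T,n,\epsilon)\;\leq\;K_\epsilon\sum_{w\in\mathcal{L}_{n+2r}(Y)}\spa(T,\ell(w),\epsilon),\]
with $K_\epsilon$ a constant depending only on $\epsilon$ and $(Y,S,\tau)$.

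I will then define the scale by $c_n(t):=\sum_{w\in\mathcal{L}_{n+2r}(Y)}b_{\ell(w)}(t)$. This is a legitimate scale: it is monotone in $t$, and the $\lambda$-unboundedness hypothesis furnishes a function $L(n)\to\infty$ together with at least $\lambda|\mathcal{L}_{n+2r}(Y)|$ words $w$ satisfying $\ell(w)\geq L(n)$, so $c_n(t)\geq\lambda|\mathcal{L}_{n+2r}(Y)|b_{L(n)}(t)\to\infty$. Both inequalities of the theorem then follow by a standard splitting argument applied to the bounds above: for $t>\upperent_{\mathbf{b}}(T)$ and $\delta>0$, I choose $M$ such that $\spa(T,m,\epsilon)\leq\delta b_m(t)$ for $m\geq M$; the contribution from words with $\ell(w)<M$ is at most $|\mathcal{L}_{n+2r}(Y)|\spa(T,M,\epsilon)=o(c_n(t))$ by the lower bound on $c_n(t)$ just noted, and the contribution from $\ell(w)\geq M$ is at most $\delta c_n(t)$, giving $\limsup_n\spa(S\rtimes_\tau T,n,\epsilon)/c_n(t)\leq K_\epsilon\delta$, which is arbitrarily small. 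The reverse inequality is dual, using $\spa(T,m,\epsilon)\geq\eta b_m(t)$ for $m$ large when $t<\lowerent_{\mathbf{b}}(T)$. The point expected to require the most care is establishing the two-sided bound above; this relies crucially on the interval structure of $K(y,n)$ afforded by $\tau(Y)\subseteq\{-1,0,1\}$, without which different values of $y$ in the same cylinder could produce genuinely different orbit visiting sets in $\Z$ and the fiber-wise cover could no longer be rephrased as a standard Bowen cover of $T$.
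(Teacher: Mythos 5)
Your proposal is correct and follows essentially the same route as the paper: the same cylinder decomposition and two-sided bound $\sum_w \spa(T,\ell(w),\epsilon) \lesssim \spa(S\rtimes_\tau T,n,\epsilon) \lesssim K_\epsilon \sum_w \spa(T,\ell(w),\epsilon)$ (the paper's \Cref{prop:the-set-A}, where the two sides actually carry slightly different radii $2\epsilon$ and $\epsilon/2$ --- harmless since slow entropy takes $\epsilon\to 0$), the same scale $c_n(t)=\sum_w b_{\ell(w)}(t)$, and the same splitting of the sum into words with small and large $\ell(w)$, using $\lambda$-unboundedness to absorb the small-word contribution and a finite positive minimum for the dual lower bound.
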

The conclusion of this result is most meaningful when the upper and lower topological slow entropy of $T$ with scale $b_n(t)$ are equal. In this case the upper and lower topological slow entropy of $S\rtimes_\tau T$ at scale $c_n(t)$ are both equal to the slow entropy of $T$ with scale $b_n(t)$.

The next Corollary summarizes the relevant consequence of these two results in relation to \Cref{question-topological}.
\begin{corollary}\label{corollary}
    Let $(Y,S)$ be a subshift and let $\tau\colon Y\to \Z$ be a continuous map which is $\lambda$-unbounded for some $\lambda>0$. Let $T_1$ and $T_2$ be invertible topological dynamical systems and assume that there is a topological factor map from $S\rtimes_\tau T_1$ to $S\rtimes_\tau T_2$. Then $h_{top}(T_1)\geq h_{top}(T_2)$. 

    With the extra assumption that $\tau(y)\in \{-1,0,1\}$ for every $y\in Y$, we have $\upperent_{\textbf{b}}(T_1)\geq \lowerent_{\textbf{b}}(T_2)$ for every scale $\textbf{b}$.  
\end{corollary}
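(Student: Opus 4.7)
The plan is to derive the corollary as a direct consequence of Theorems~\ref{thm:entropy} and~\ref{thm:slow-entropy} combined with the monotonicity of topological slow entropy under factor maps. The key auxiliary fact I would record first is the following: if $\pi\colon(X_1,F_1)\to(X_2,F_2)$ is a topological factor map and $\textbf{a}$ is any scale, then $\upperent_{\textbf{a}}(F_1)\geq\upperent_{\textbf{a}}(F_2)$ and $\lowerent_{\textbf{a}}(F_1)\geq\lowerent_{\textbf{a}}(F_2)$. This is routine: given $\epsilon>0$, uniform continuity of $\pi$ furnishes $\delta>0$ with $d_{X_1}(x,y)<\delta\Rightarrow d_{X_2}(\pi(x),\pi(y))<\epsilon$, and the image under $\pi$ of any $(n,\delta)$-spanning set of $X_1$ is an $(n,\epsilon)$-spanning set of $X_2$. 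Hence $\spa(F_1,n,\delta)\geq\spa(F_2,n,\epsilon)$, and the inequality for the slow entropies follows from the definition.

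With this lemma, the first assertion is immediate: invoke Theorem~\ref{thm:entropy} to obtain a scale $\textbf{a}$ (depending only on $(Y,S,\tau)$) with $\upperent_{\textbf{a}}(S\rtimes_\tau T)=h_{top}(T)$ for every invertible $(X,T)$, and apply monotonicity to the given factor map to conclude $h_{top}(T_1)=\upperent_{\textbf{a}}(S\rtimes_\tau T_1)\geq\upperent_{\textbf{a}}(S\rtimes_\tau T_2)=h_{top}(T_2)$.

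For the second assertion, I would fix an arbitrary scale $\textbf{b}$ and apply Theorem~\ref{thm:slow-entropy} (using the extra hypothesis $\tau(y)\in\{-1,0,1\}$) to produce a scale $\textbf{c}$ satisfying $\upperent_{\textbf{c}}(S\rtimes_\tau T)\leq\upperent_{\textbf{b}}(T)$ and $\lowerent_{\textbf{c}}(S\rtimes_\tau T)\geq\lowerent_{\textbf{b}}(T)$ uniformly in $T$. Applying the first inequality to $T_1$, the second to $T_2$, threading the monotonicity lemma through the factor map and using the trivial inequality $\upperent\geq\lowerent$, one obtains the chain $\upperent_{\textbf{b}}(T_1)\geq\upperent_{\textbf{c}}(S\rtimes_\tau T_1)\geq\upperent_{\textbf{c}}(S\rtimes_\tau T_2)\geq\lowerent_{\textbf{c}}(S\rtimes_\tau T_2)\geq\lowerent_{\textbf{b}}(T_2)$, which is the desired conclusion. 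Since both assertions reduce to the two main theorems plus a standard monotonicity lemma, the corollary itself poses no substantial obstacle; all the genuine work is packaged into the proofs of Theorems~\ref{thm:entropy} and~\ref{thm:slow-entropy}, and the only point that requires a moment's care is matching the directions of the inequalities in Theorem~\ref{thm:slow-entropy} so that the upper bound is applied to $T_1$ and the lower bound to $T_2$.
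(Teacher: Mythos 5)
Your proposal is correct and is essentially the argument the paper intends: the corollary is deduced directly from Theorems~\ref{thm:entropy} and~\ref{thm:slow-entropy} together with the fact, recorded in the preliminaries, that $\upperent_{\textbf{a}}$ and $\lowerent_{\textbf{a}}$ are non-increasing under factor maps. Your chain of inequalities for the second assertion, including the use of $\upperent\geq\lowerent$ on the factor system, is exactly the right bookkeeping.
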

This result is a topological generalization of the results in \cite{
heicklen_entropy_2000,ball_entropy_2003,austin_scenery_2014}. As we mentioned before, known results for measure-preserving transformations require the system in the base to have positive entropy ($\{-1,1\}^\N$ with the uniform measure in \cite{
heicklen_entropy_2000,ball_entropy_2003}, and a mixing $\Z$-SFT in \cite{austin_scenery_2014}). In contrast, the result above can be applied to zero-entropy systems in the base. For instance, this result applies to any minimal subshift $(Y,S)$ with a cocycle $\tau$ which is not a coboundary.  We shall review this and other examples in \Cref{sec:examples}.

It is natural to ask  if the scales that we construct can be used for measure-theoretic $T,T^{-1}$ systems. This is not the case in general, in the sense that the scale may not capture the entropy of the fiber. For instance, consider a skew product $S\rtimes_\tau T$, where $S$ is the shift over $Y=\{-1,1\}^\Z$, $\tau$ is given by $\tau(y)=y(0)$ for all $y$, and $(X,T)$ is an arbitrary invertible system. Endow $Y\times X$ with the product $\nu\times \mu$ of the uniform measure $\nu$ on $\{-1,1\}^\Z$, and a $T$ invariant Borel probability measure $\mu$ on $X$ with finite entropy. It is not difficult to verify that the measure-theoretic slow entropy of $S\rtimes_\tau T$ with the measure $\nu\times\mu$ and the scale $a_n(t)$ from \Cref{thm:entropy} is zero. This means that the correct estimate for Hamming (pseudo)-balls associated to $S\rtimes_\tau T$ grows much slower than the one for Bowen balls. This is a different situation from \cite{kanigowski_slow_2019,kanigowski_slow_2018,kanigowski_slow_2022}, where the two estimates coincide and topological slow entropy can be used to compute measure-theoretic slow entropy. The possibility of results analogous to \Cref{thm:entropy} and \Cref{thm:slow-entropy} for measure-preserving systems will be studied in future work. 

Another invariant considered in this work is sequence entropy. This invariant is obtained by replacing in the definition of entropy the iterates $T^1,T^2,T^3,\dots$ by $T^{t_1},T^{t_2},T^{t_2},\dots$, for an arbitrary sequence of natural numbers $A=(t_i)_{i\geq 1}$. The invariant obtained is denoted $h^A_{top}(T)$ for a topological system $(X,T)$, and $h^A_{\mu}(T)$ for a measure preserving system $(X,\mathcal B,\mu, T)$. This invariant was first introduced for measure preserving systems in \cite{kushnirenko_metric_1967}, and was later extended to topological systems in \cite{goodman_topological_1974}. Subsequent works in this subject include \cite{gao_sequence_2020, canovas_interval_2000,hric_topological_nodate}. 

In this work we strengthen a classical result in the theory of sequence entropy. Namely, the next result was proved in the seminal work of Goodman  \cite[Theorems 4.1 and 4.4]{goodman_topological_1974} under the extra assumption of finite topological dimension. 
\begin{theorem}\label{Topological-kruw-newton}\label{goodman-v2}
Let $X$ be a compact metric space and let $T\colon X\to X$  be continuous  transformation. Let $A=\{t_1,\dots\}$ be a sequence of natural numbers, and define
\[S_A(n,m)=\{t_i+j : i=1,\dots,n,\ j=0,\dots,m-1\}, \ \ n,m\in\N\]
\begin{equation}\label{def:K}
K(A)=\lim_{m\to\infty} \limsup_{n\to\infty} \frac{1}{n}|S_A(n,m)|.
\end{equation}
Then:
\begin{enumerate}
    \item Goodwyn's theorem holds for sequence entropy:
    \[\sup_{\mu\in M_T(X)} h^{A}_{\mu}(T)\leq h^A_{top}(T).\]
    \item Unless ($K(A)=\infty$ and $h_{top}(T)=0$), the variational principle holds for sequence entropy:
    \[\sup_{\mu\in M_T(X)} h^{A}_{\mu}(T) = h^A_{top}(T).\]
    \item  Unless ($K(A)=\infty$ and $h_{top}(T)=0$), 
    \[h^A_{top}(T)= \begin{cases}
        K(A)h_{top}(T) &K(A)<\infty, \  h_{top}(T)<\infty\\
        \infty & K(A)=\infty, \ h_{top}(T)>0\\
        \infty & K(A)>0, \ h_{top}(T)=\infty\\
        0&  K(A)=0,  \ h_{top}(T)=\infty.
    \end{cases}
    \]
\end{enumerate}
Here $M_T(X)$ stands for the set of $T$-invariant Borel probability measures on $X$.
\end{theorem}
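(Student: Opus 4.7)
The plan is to prove the three parts in the order (1), (3), (2); the variational principle (2) will emerge as a by-product of the lower bound in (3). The unifying strategy is to replace Goodman's finite-dimension hypothesis with a combination of elementary cover counting and Sinai's factor theorem.

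For Part (1), I would adapt the classical Goodwyn argument, which needs no modification to accommodate the sequence $A$. Given a finite measurable partition $\xi=\{A_1,\dots,A_k\}$ and $\delta>0$, regularity of $\mu$ yields pairwise disjoint compact sets $K_j\subseteq A_j$ with $\mu(A_j\setminus K_j)<\delta$, hence pairwise disjoint open neighborhoods $U_j\supseteq K_j$. The open cover $\U=\{U_1,\dots,U_k,X\setminus\bigcup_j K_j\}$ is ``adapted'' to $\xi$ in the standard sense, and a routine estimate gives
\[H_\mu\Bigl(\bigvee_{i=1}^n T^{-t_i}\xi\Bigr)\leq \log N\Bigl(\bigvee_{i=1}^n T^{-t_i}\U\Bigr)+n\,\epsilon(\delta),\]
with $\epsilon(\delta)\to 0$. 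Dividing by $n$, taking $\limsup_n$, then $\delta\to 0$ and the supremum over $\xi$ gives (1); no dimension hypothesis is needed.

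For the upper bound in (3), set $\mathcal{V}_m=\bigvee_{j=0}^{m-1}T^{-j}\U$. The identity $\bigvee_{i=1}^n T^{-t_i}\mathcal{V}_m=\bigvee_{k\in S_A(n,m)}T^{-k}\U$ exhibits a refinement of $\bigvee_{i=1}^n T^{-t_i}\U$. Decompose $S_A(n,m)$ into its maximal runs of consecutive integers $[a_l,b_l]$, each of length $L_l\ge m$; submultiplicativity of $N(\cdot)$ together with $N(T^{-a}\mathcal{W})\leq N(\mathcal{W})$ gives
\[N\Bigl(\bigvee_{k\in S_A(n,m)}T^{-k}\U\Bigr)\le \prod_l N(\mathcal{V}_{L_l}).\]
Choosing $m$ so large that $\log N(\mathcal{V}_L)\leq L(h_{top}(T,\U)+\epsilon)$ for every $L\ge m$, the right-hand side is bounded by $\exp\bigl((h_{top}(T,\U)+\epsilon)\,|S_A(n,m)|\bigr)$. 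Dividing by $n$ and successively taking $\limsup_n$, $m\to\infty$, $\epsilon\to 0$, and $\sup_\U$ yields $h^A_{top}(T)\le K(A)h_{top}(T)$ under the convention $0\cdot\infty=0$. This already settles every case of (3) where the claimed value is $0$.

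The matching lower bound $h^A_{top}(T)\ge K(A)h_{top}(T)$ is the main obstacle, since this is where Goodman used finite dimension; my plan is to route around it via measure theory. Assume $h_{top}(T)>0$. The standard variational principle and the affinity of entropy under ergodic decomposition supply an ergodic $T$-invariant measure $\mu$ with $h_\mu(T)$ arbitrarily close to $h_{top}(T)$ (or arbitrarily large when $h_{top}(T)=\infty$); one may pass to the natural extension to ensure invertibility. Sinai's factor theorem then produces a Bernoulli factor $(Y,\nu,S)$ of $(X,\mu,T)$ of any entropy $h_\nu(S)<h_\mu(T)$. A direct computation with cylinder partitions, using independence of coordinates, yields $h^A_\nu(S)=K(A)h_\nu(S)$ for the Bernoulli shift. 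Monotonicity of sequence entropy under factors combined with Part (1) gives
\[h^A_{top}(T)\ge h^A_\mu(T)\ge h^A_\nu(S)=K(A)h_\nu(S),\]
and letting $h_\nu(S)\to h_\mu(T)\to h_{top}(T)$ finishes the lower bound. Part (2) is an immediate corollary: the ergodic measures constructed in this step satisfy $h^A_\mu(T)\to K(A)h_{top}(T)=h^A_{top}(T)$, so they attain the supremum in the variational principle. The exclusion $K(A)=\infty,\,h_{top}(T)=0$ is unavoidable, since Sinai provides no positive-entropy factor in that regime and $h^A_{top}(T)$ can genuinely take any value in $[0,\infty]$.
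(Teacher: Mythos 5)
The decisive gap is in your Part (1), and it propagates to everything else since your Parts (2) and (3) both invoke Part (1). The ``routine estimate'' you assert, namely $H_\mu\bigl(\bigvee_{i=1}^n T^{-t_i}\xi\bigr)\leq \log N\bigl(\bigvee_{i=1}^n T^{-t_i}\mathcal{U}\bigr)+n\,\epsilon(\delta)$ with $\epsilon(\delta)\to 0$, is not what the adapted-cover construction gives. In the standard Misiurewicz/Goodwyn argument each element of the adapted cover meets \emph{two} atoms of the auxiliary partition $\{K_0,K_1,\dots,K_k\}$, so each element of the $n$-fold join of the cover meets up to $2^n$ atoms of the join of the partition; the error term is therefore $n\log 2$ plus a small conditional-entropy term, and the $n\log 2$ does not shrink as $\delta\to 0$. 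One only obtains $h^A_\mu(T)\leq h^A_{top}(T)+\log 2$. Classically the additive constant is removed by applying the inequality to $T^N$ or to $T\times\cdots\times T$ and using that entropy is linear under powers and products --- but sequence entropy is \emph{not}: the identity $h^A_\mu(T\times T)=2h^A_\mu(T)$ claimed by Kushnirenko was disproved by Lema\'nczyk, and $h^A$ of $T^N$ is not $N$ times $h^A$ of $T$ either. This is precisely why Goodman had to assume finite topological dimension (to control the multiplicity of the covers) and why the earlier dimension-free ``proofs'' of Part (1) are flawed. The paper circumvents this by an entirely different, Katok-style argument: it introduces $\spa(T,\{t_1,\dots,t_n\},\epsilon,\delta)$ (minimal number of Bowen balls covering a set of measure $1-\delta$), compares Bowen balls with Hamming pseudo-balls relative to a partition with $\mu$-null boundary, and bounds $H(\xi^{\{t_1,\dots,t_n\}})$ by splitting the partition into a piece controlled by the Hamming-ball count and a piece with one atom of measure close to $1$. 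Your proposal contains no substitute for this step, so Part (1) is not established.

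The remainder of your plan is essentially sound and close in spirit to the paper's. Your upper bound in Part (3) (decomposing $S_A(n,m)$ into maximal runs of length at least $m$ and using subadditivity of $\log N(\cdot)$) is exactly Goodman's Lemma 4.2, which the paper cites. For the lower bound, your detour through Sinai's factor theorem is heavier than necessary and introduces avoidable technicalities (you need $h^A$ to be preserved under passing to the natural extension, and Sinai requires invertibility and ergodicity): the inequality $h^A_\mu(T)\geq K(A)h_\mu(T)$ holds for \emph{every} invariant $\mu$ by a direct computation with the refined partitions $\xi^{\{0,\dots,m-1\}}$, since $\bigvee_{i=1}^n T^{-t_i}\xi^{\{0,\dots,m-1\}}=\xi^{S_A(n,m)}$ and $\tfrac{1}{|F|}H(\xi^F)\geq h_\mu(T,\xi)$; this is Goodman's Lemma 4.3, which the paper uses verbatim, and it makes your Bernoulli-factor computation a special case. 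Part (2) then follows from (1), (3), and the standard variational principle exactly as you say. But none of this can be salvaged until Part (1) is proved by a method that does not rely on the vanishing of the cover-multiplicity error or on the behaviour of sequence entropy under powers and products.
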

\begin{remark} 
    Part (1) in \Cref{goodman-v2} was proved by Goodman \cite[Theorem 3.1]{goodman_topological_1974}, by Eberlein \cite[Theorem 4.3]{eberlein_topological_nodate}, and more recently by Huang and Ye \cite[Theorem 2.6]{huang_combinatorial_2009}. These proofs rely on the relation $h^{A}_{\mu}(T\times T)=2 h^{A}_{\mu}(T)$, stated in Kushnirenko's work \cite{kushnirenko_metric_1967}. This relation was  disproved in \cite{lemanczyk_sequence_1985} (see also \cite[Section 4]{gogolev_survey_2024}). To our knowledge the proof presented here is the first to avoid this issue. See also \Cref{remark:explanation-goodman-proof}.
\end{remark}

There are families of systems for which both sequence entropy and slow entropy have shown to be useful invariants, see for instance \cite{kanigowski_slow_2019}. This may give the impression that both invariants have similar distinguishing power. In \Cref{example:slow-entropy-and-sequence-entropy} we find a family of systems where sequence entropy provides no information, but slow entropy does.

\subsection*{Acknowledgements}
The author thanks A. Kanigowski for his guidance, helpful questions, and for kindly reading preliminary versions of this text. This work was supported by a grant from the Priority Research Area SciMat under the Strategic Programme Excellence Initiative at Jagiellonian University.

\section{Preliminaries}\label{sec:preliminaries}
In this section we review definitions and basic facts about sequence entropy and topological slow entropy. The reader is referred to the survey \cite{gogolev_survey_2024} for more details.

Let $(X,T)$ be a topological dynamical system. By this we mean that $T\colon X\to X$ is a continuous transformation of the compact metric space $(X,d)$. We do not require $T$ to be invertible.  We let $\SE=\Z$ if $T$ is invertible and $\SE=\N$ otherwise. We write $F\Subset \SE$ when $F$ is a finite subset of $\SE$. The notation $F+F'$ stands for $\{f+f' : f\in F, f'\in F'\}$, $F+n$ denotes  $\{f+n : f\in F\}$, $F,F'\Subset \SE$, $n\in\SE$, and $|F|$ denotes the cardinality of $F$. We call $F$ an interval if it is equal to $\{n,\dots,m\}$ for some $n,m\in\SE$.
\subsection{Bowen metrics and open covers}
For each $F\Subset \SE$ we define the Bowen metric $d^B_F$ on $X$ by
\[d^B_{F}(x,x')=\max\{d(T^{i}(x),T^{i}(x'): i\in F\}.\]
The Bowen $F,\epsilon$-ball is  $B^B_F(x,\epsilon)=\{y\in X: d_F^B(x,y)<\epsilon\}$. If $d^B_{F}(x,x')\leq \epsilon$, we say that $x$ and $x'$ are $\epsilon,F$-close. A set $E\subset X$ is called $\epsilon,F$-separated if $d^B_F(x,y)>\epsilon$ for every $x,y \in E$, and $\epsilon,F$-spanning if for all $x\in X$ there is $y\in E$ with $d^B_F(x,y)\leq\epsilon$. $\sep(T,F,\epsilon)$ denotes the maximal cardinality of an $\epsilon,F$-separated set, and  $\spa(T,F,\epsilon)$ denotes the minimal cardinality of a $\epsilon,F$-spanning set. One easily verifies the inequalities
\begin{equation}\label{sep-and-spa}\spa(T,F,\epsilon)\leq \sep(T,F,\epsilon)\leq \spa(T,F,\epsilon/2), \ \ \epsilon>0, \ F\Subset\SE.
\end{equation}  
We now review some basic facts about finite open covers. A cover $\U$ of $X$ is a finite collection of open subsets of $X$ whose union equals $X$. Given two covers $\U$ and $\mathcal V$ we write $\U\vee \mathcal V=\{U\cap V : U\in \U, V\in \mathcal V\}$.  We denote by $N(\U)$ the minimal cardinality of a subset of $\U$ that covers $X$, called a subcover. We write \[\U^F = \bigvee_{i\in F}T^{-i}\U, \ \ F\Subset \SE.\] 
\begin{proposition}\label{wellknown}
Fix a finite open cover $\U$ of $X$ and $\epsilon>0$. If every element in $\U$ has diameter smaller than $\epsilon$ then
    \[\spa(T,F,\epsilon)\leq N(\U^F) \ \text{for all }F\Subset \SE.\]
If $\epsilon$ is smaller than the  Lebesgue number for $\U$ then
    \[N(\U^F)\leq\sep(T,F,\epsilon) \ \text{for all }F\Subset \SE.
    \]
\end{proposition}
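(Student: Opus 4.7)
The plan is to prove the two inequalities separately; each amounts to a routine translation between the language of finite open covers and the language of Bowen balls.

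For the first, $\spa(T,F,\epsilon)\leq N(\U^F)$, I would fix a minimal subcover $\U'\subseteq\U^F$ with $|\U'|=N(\U^F)$ and select one point $x_V$ from each $V\in\U'$. Each such $V$ has the form $\bigcap_{i\in F}T^{-i}(U_i)$ for some $U_i\in\U$, and since each $U_i$ has diameter less than $\epsilon$, any two points of $V$ are $\epsilon,F$-close. In particular every $y\in X$ lies in some $V\in\U'$ and is therefore $\epsilon,F$-close to $x_V$, so $\{x_V:V\in\U'\}$ is an $\epsilon,F$-spanning set of size $N(\U^F)$, yielding the inequality.

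For the second, $N(\U^F)\leq\sep(T,F,\epsilon)$, I would pick a maximal $\epsilon,F$-separated set $E$ of size $\sep(T,F,\epsilon)$ and observe that, by maximality, the closed Bowen balls $\{y:d^B_F(y,x)\leq\epsilon\}$ centered at points of $E$ cover $X$. Writing $\delta$ for the Lebesgue number of $\U$ and picking an auxiliary $\epsilon'\in(\epsilon,\delta)$, every closed metric ball of radius $\epsilon$ sits inside the corresponding open ball of radius $\epsilon'$, which by the Lebesgue-number property lies in some element of $\U$. Applying this to each $T^i(x)$, the closed Bowen $\epsilon,F$-ball around $x$ is contained in some $\bigcap_{i\in F}T^{-i}(U_i^x)\in\U^F$, so selecting one such element per $x\in E$ produces a subcover of $\U^F$ of cardinality at most $|E|$.

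I do not expect any serious obstacle: the only point requiring a little attention is the open-versus-closed ball distinction in part two, handled via the intermediate radius $\epsilon'$. This is the standard argument found in textbook treatments of topological entropy, and the writeup should be short and self-contained.
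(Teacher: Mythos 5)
Your proof is correct and is exactly the standard argument the paper itself relies on: the paper gives no proof of its own but cites \cite{kerr_ergodic_2016} and \cite{petersen_ergodic_1983}, remarking only that the textbook argument for $F=\{0,\dots,n-1\}$ carries over verbatim to arbitrary $F\Subset\SE$, which is what you do. Your handling of the open-versus-closed ball issue via the intermediate radius $\epsilon'$ is consistent with the convention (implicit in the paper's later use of this proposition, where $\sep(T,F,2\delta)\geq N(\U^F)$ is invoked for $2\delta$ below the Lebesgue number) that every ball of radius less than the Lebesgue number lies in a member of $\U$.
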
See \cite[Lemma 9.37]{kerr_ergodic_2016} for a proof. Alternatively, the proof in the case $F=\{0,\dots,n-1\}$ is well known and can be found in \cite[Proposition 3.8, Chapter 6]{petersen_ergodic_1983}. The same argument can be applied to  $F\Subset\SE$. 

\begin{proposition}\label{monotony-properties-of-spa}Let $F,F'\Subset\SE$ and $n\in\SE$. 
    \begin{enumerate}
        \item If $F\subset F'$ then $\spa(T,F,\epsilon)\leq\spa(T,F',\epsilon)$. 
        \item We have $\spa(T,F,\epsilon)=\spa(T,F+n,\epsilon)$. 
        \item If $F$ and $F'$ are intervals and $|F|\leq |F'|$ then $\spa(T,F,\epsilon)\leq \spa(T,F',\epsilon)$
    \end{enumerate}
    \end{proposition}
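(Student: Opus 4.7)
The plan is to establish parts (1), (2), (3) in sequence, with (3) being a direct consequence of (1) and (2).

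Part (1) is immediate from the definition: when $F \subset F'$ the identification of $d^B_F$ as a maximum over $F$-iterates gives the pointwise inequality $d^B_F(x,y) \leq d^B_{F'}(x,y)$. Therefore any $\epsilon,F'$-spanning set is automatically $\epsilon,F$-spanning, proving $\spa(T,F,\epsilon) \leq \spa(T,F',\epsilon)$.

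For part (2), the key identity I would use is $d^B_{F+n}(x,y) = d^B_F(T^n x, T^n y)$, which is obtained by reindexing the maximum defining $d^B_{F+n}$. When $T$ is invertible, $T^n$ is a homeomorphism of $X$, so the assignment $E \mapsto T^n E$ is a cardinality-preserving bijection that sends $\epsilon,F+n$-spanning sets of $X$ to $\epsilon,F$-spanning sets of $X$ and back, forcing the equality of the two minima. In the non-invertible case one must be more careful, since $T^n$ need not be surjective and the assignment $E \mapsto T^n E$ only gives a spanning set for the compact subspace $T^n X \subseteq X$; however, this regime is not needed for the applications to the invertible skew products $S\rtimes_\tau T$ appearing in the main theorems, so I would either assume invertibility or insert surjectivity as a standing hypothesis.

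Part (3) then follows formally: by applying (2) to translate both intervals so that they share the same left endpoint, we may assume $F = \{0,\dots,|F|-1\}$ and $F' = \{0,\dots,|F'|-1\}$, which gives $F \subset F'$ whenever $|F| \leq |F'|$, and (1) concludes. I do not anticipate any significant obstacle; the proposition is a short compendium of standard monotonicity and translation-invariance properties of the Bowen counting functions, each reducing to an almost formal observation about the Bowen metric. The mildest subtlety is the point already flagged above, namely ensuring (2) is applied in a setting where $T^n$ is genuinely bijective on $X$, which is automatic throughout the rest of the paper.
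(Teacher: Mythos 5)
Your proof is correct and follows essentially the same route as the paper, which simply records that (1) and (2) are immediate from the definitions and that (3) follows because $|F|\leq|F'|$ lets one translate $F$ inside $F'$. Your caveat about (2) in the non-invertible case is a fair observation (the identity $d^B_{F+n}(x,y)=d^B_F(T^nx,T^ny)$ genuinely needs $T^n$ to be surjective for the two minima to coincide), but as you note every application of the proposition in the paper is to an invertible $T$, so nothing is affected.
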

\begin{proof}
    The first two claims follow directly from the definitions. The third follows from (1) and (2), since $|F|\leq |F'|$ implies that for some $n$ we have $F+n\subset F'$.
\end{proof}
\subsection{Topological entropy and sequence entropy} Let $A=(t_i)_{i\geq 1}$ be a sequence of natural numbers. The topological sequence entropy $h^A_{top}(T)$ is defined by 
\[ h^{A}_{top}(T,\U)=\limsup_n \frac{1}{n}\log(N( \U^{\{t_1,\dots,t_n\}})) \]
\[
h^A_{top}(T)=\sup_{\U}h^A_{top}(T,\U).\]
Here $\U$ ranges over finite open covers of $X$. Topological sequence entropy can also be computed in terms of $\spa(T,F,\epsilon)$. For this we define 
\begin{equation}\label{definition-h-T-epsilon}h^A_{top}(T,\epsilon)=\limsup_{n} \frac{1}{n}\log \spa(T,\{t_1,\dots,t_n\},\epsilon), \ \ \ \epsilon>0.\end{equation}
\begin{proposition}\label{sequence-entropy-spanning-sets}
    $h^A_{top}(T)=\sup_{\U}h^A_{top}(T,\U)=\lim_{\epsilon\to 0^+} h^A_{top}(T,\epsilon)$
\end{proposition}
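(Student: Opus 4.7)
The first equality is merely the definition of $h^A_{top}(T)$, so the real content is the identity
$\sup_{\U}h^A_{top}(T,\U)=\lim_{\epsilon\to 0^+} h^A_{top}(T,\epsilon)$.
This is a verbatim adaptation of the classical fact for topological entropy (case $t_i=i$), and the plan is to chase the two inequalities through \Cref{wellknown} and \Cref{sep-and-spa}. I would begin by noting that $\spa(T,F,\epsilon)$ is non-increasing in $\epsilon$ for every fixed $F\Subset\SE$, so $h^A_{top}(T,\epsilon)$ as defined in \eqref{definition-h-T-epsilon} is non-increasing in $\epsilon$. Hence the limit $\lim_{\epsilon\to 0^+} h^A_{top}(T,\epsilon)$ exists in $[0,\infty]$ and equals $\sup_{\epsilon>0} h^A_{top}(T,\epsilon)$.

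For the inequality $\sup_{\U}h^A_{top}(T,\U)\le\lim_{\epsilon\to 0^+} h^A_{top}(T,\epsilon)$, I would fix a finite open cover $\U$, let $\delta>0$ be its Lebesgue number, and apply the second half of \Cref{wellknown} together with \eqref{sep-and-spa} to get
\[ N(\U^{F_n})\le \sep(T,F_n,\delta)\le \spa(T,F_n,\delta/2), \]
where $F_n=\{t_1,\dots,t_n\}$. Taking $\tfrac{1}{n}\log$ and $\limsup_n$ yields $h^A_{top}(T,\U)\le h^A_{top}(T,\delta/2)\le \lim_{\epsilon\to 0^+}h^A_{top}(T,\epsilon)$; taking sup over $\U$ closes this direction.

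For the reverse inequality, I would pick, for each $\epsilon>0$, a finite open cover $\U_\epsilon$ all of whose elements have diameter smaller than $\epsilon$ (such a cover exists by compactness of $X$). The first half of \Cref{wellknown} then gives $\spa(T,F_n,\epsilon)\le N(\U_\epsilon^{F_n})$, so that $h^A_{top}(T,\epsilon)\le h^A_{top}(T,\U_\epsilon)\le \sup_{\U}h^A_{top}(T,\U)$. Letting $\epsilon\to 0^+$ yields the desired bound.

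There is no real obstacle here; the only care needed is that the $\limsup$ is taken \emph{after} the cover/Lebesgue comparisons, since those comparisons hold \emph{termwise} in $n$, so they pass to $\limsup_n$ without issue. The argument is essentially identical to the standard one in \cite[Proposition 3.8, Chapter 6]{petersen_ergodic_1983} with $\{0,\dots,n-1\}$ replaced by $F_n=\{t_1,\dots,t_n\}$, which is legitimate because \Cref{wellknown} and \eqref{sep-and-spa} are stated for arbitrary $F\Subset\SE$.
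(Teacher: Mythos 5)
Your argument is correct and is essentially the paper's own proof, just written out in full: both directions come from \Cref{wellknown} combined with \Cref{sep-and-spa}, exactly as in the paper's two-line justification. The only cosmetic point is that \Cref{wellknown} asks for $\epsilon$ \emph{strictly smaller} than the Lebesgue number, so you should take $\delta$ slightly below it rather than equal to it; this changes nothing in the argument.
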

\begin{proof}
\Cref{wellknown} shows that for every $\epsilon$ there is a finite open cover $\U$ such that $h^A_{top}(T,\epsilon)\leq h^A_{top}(T,\U)$, and that for every $\U$ there is $\epsilon$ small enough such that $h^A_{top}(T,\U)\leq h^A_{top}(T,\epsilon)$.
\end{proof}
In the case $A=\{0,1,2,\dots\}$ we omit $A$ from the notations and $h^A_{top}(T)=h_{top}(T)$ is called the topological entropy of $T$. 
\subsection{Measure theoretic entropy and sequence entropy}
Let $\mu$ be a Borel $T$-invariant probability measure on $X$. If $\xi$ is a finite Borel measurable partition of $X$ then we write $H(\xi)=\sum_{P\in \xi}-\mu(P)\log \mu(P)$. If $\eta$ is another partition then $\xi\vee \eta $ denotes $\{P\cap Q : P\in \xi, Q\in \eta\}$. We write
\[\xi^F = \bigvee_{i\in F}T^{-i}\xi, \ \ F\Subset \SE.\] 

The measure-theoretic sequence entropy $h^{A}_{\mu}(T)$ is defined by
\[h^{A}_{\mu}(T)=\limsup_n \frac{1}{n}H(\xi^{\{t_1,\dots,t_n\}}),\]
\[h^{A}_{\mu}(T)=\sup_{\xi} h^{A}_{\mu}(T,\xi)\]
Here $\xi$ ranges over all finite Borel-measurable partitions of $X$. In the case where the sequence $A$ is given by $t_n=n-1$ we omit $A$ from the notation and $h^{A}_{\mu}(T)=h_{\mu}(T)$ is called the measure theoretic entropy of $T$ associated to $\mu$. 
\subsection{Topological slow entropy}
Let $\textbf{a}=\{a_n(t)\}_{n\in\N,t>0}$ be a family of positive sequences increasing to infinity and monotone in $t$. We say that $\{a_n(t)\}_{n\in\N,t>0}$ is a scale. The topological slow entropy $\upperent_{\textbf{a}}(T)$ of $T$ with respect to $\{a_n(t)\}_{n\in\N,t>0}$ is defined by  
\[\upperent_{\textbf{a}}(T,\epsilon)=\sup(\{0\}\cup\{t>0 : \limsup_n\frac{\spa(T,\{0,\dots,n-1\},\epsilon)}{a_n(t)}>0\}), \  \ \epsilon>0\]
\[\upperent_{\textbf{a}}(T)=\lim_{\epsilon\to 0} \upperent_{\textbf{a}}(T,\epsilon)\]
The lower topological slow entropy $\lowerent_{\textbf{a}}(T)$ is defined similarly, replacing $\limsup$ by $\liminf$.

One easily verifies that $\upperent_{\mathbf{a}}$ and $\lowerent_{\mathbf{a}}$ are conjugacy invariants, and non-increasing by factor maps.
\begin{proposition}\label{prop:slow-entropy-with-exponential-scale-is-entropy}
The topological entropy $h_{top}(T)$ is equal to the upper and lower topological slow entropy of $T$ with respect to   the scale $\textbf{e}=\{e^{nt}\}_{n\in\N,t>0}$. That is, 
\[\upperent_{\mathbf{e}}(T)=\lowerent_{\mathbf{e}}(T)=h_{top}(T).\]
\end{proposition}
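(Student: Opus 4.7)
The plan is to reduce everything to the observation that topological entropy can be computed via $\spa(T,\{0,\dots,n-1\},\epsilon)$ (as asserted by \Cref{sequence-entropy-spanning-sets} applied to the sequence $A=\{1,2,3,\dots\}$), and then to compare directly the exponential growth rate of a sequence $s_n$ with the behaviour of the ratios $s_n/e^{nt}$.

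First I would fix $\epsilon>0$ and write $s_n=\spa(T,\{0,\dots,n-1\},\epsilon)$, and set $\overline h(\epsilon)=\limsup_n \tfrac{1}{n}\log s_n$ and $\underline h(\epsilon)=\liminf_n \tfrac{1}{n}\log s_n$. The key elementary fact I would use is: for any sequence of positive reals $s_n$, the set $\{t>0 : \limsup_n s_n e^{-nt}>0\}$ equals $(0,\overline h(\epsilon))$ (interpreted as empty when $\overline h(\epsilon)=0$, and as $(0,\infty)$ when $\overline h(\epsilon)=\infty$), and likewise with $\liminf$ in place of $\limsup$ and $\underline h$ in place of $\overline h$. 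The argument is a one-line comparison: for $t<\overline h(\epsilon)$ one finds a subsequence $n_k$ with $\log s_{n_k}>n_k t$, so the ratio exceeds $1$ infinitely often; for $t>\overline h(\epsilon)$ one picks $t'\in(\overline h(\epsilon),t)$ and notes $s_n/e^{nt}<e^{n(t'-t)}\to 0$ eventually. Taking suprema (with the element $0$ adjoined) gives $\upperent_{\mathbf{e}}(T,\epsilon)=\overline h(\epsilon)$ and $\lowerent_{\mathbf{e}}(T,\epsilon)=\underline h(\epsilon)$.

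Next I would take $\epsilon\to 0^+$. For the upper version, \Cref{sequence-entropy-spanning-sets} applied with $A=\{1,2,\dots\}$ directly gives $\lim_{\epsilon\to 0^+}\overline h(\epsilon)=h_{top}(T)$, so $\upperent_{\mathbf{e}}(T)=h_{top}(T)$. For the lower version I need the extra fact that $\lim_{\epsilon\to 0^+}\underline h(\epsilon)=h_{top}(T)$ as well. This is where I would invoke subadditivity: for any finite open cover $\U$ with Lebesgue number exceeding $\epsilon$, \Cref{wellknown} gives $N(\U^{\{0,\dots,n-1\}})\leq \sep(T,\{0,\dots,n-1\},\epsilon)\leq \spa(T,\{0,\dots,n-1\},\epsilon/2)$; since $\log N(\U^{\{0,\dots,n-1\}})$ is subadditive in $n$, the liminf, limsup, limit and infimum of $\tfrac{1}{n}\log N(\U^{\{0,\dots,n-1\}})$ all coincide and equal $h_{top}(T,\U)$. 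Hence $h_{top}(T,\U)\leq \underline h(\epsilon/2)\leq \overline h(\epsilon/2)$, and taking $\sup_{\U}$ gives $h_{top}(T)\leq \liminf_{\epsilon\to 0^+}\underline h(\epsilon)$, while the reverse inequality is immediate from the other direction of \Cref{wellknown}.

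Combining these, $\lowerent_{\mathbf{e}}(T)=\lim_{\epsilon\to 0^+}\underline h(\epsilon)=h_{top}(T)=\lim_{\epsilon\to 0^+}\overline h(\epsilon)=\upperent_{\mathbf{e}}(T)$, which is the statement. I expect no serious obstacle; the only point requiring attention is the lower slow entropy, where the $\liminf$ in the definition must be matched against the $\limsup$ in the standard formula for $h_{top}$, and it is precisely the subadditivity of $\log N(\U^{\{0,\dots,n-1\}})$ in $n$ (together with the sandwich between $\spa$ and $N(\U^{\bullet})$ from \Cref{wellknown}) that closes the gap.
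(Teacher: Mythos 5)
Your proposal is correct and follows essentially the same route as the paper: both reduce the statement to comparing the exponential growth rate of $\spa(T,\{0,\dots,n-1\},\epsilon)$ against $e^{nt}$, handling the upper bound via a $t'\in(h_{top}(T,\epsilon),t)$ and the lower bound via the $\liminf$ formula for topological entropy. The only difference is that the paper merely asserts that $h_{top}(T)=\lim_{\epsilon\to 0^+}\liminf_n\frac{1}{n}\log\spa(T,\{0,\dots,n-1\},\epsilon)$, whereas you supply the justification through subadditivity of $\log N(\U^{\{0,\dots,n-1\}})$ and the sandwich from \Cref{wellknown} — a worthwhile addition, not a different argument.
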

\begin{proof}
We start by proving that $\upperent_{\mathbf{e}}(T)\leq h_{top}(T)$. This is clearly true if $h_{top}(T)=\infty$. Otherwise, let $t$ be an arbitrary real number with $h_{top}(T)<t$. Choose $\epsilon$ small enough so that $h_{top}(T,\epsilon)<t$, and let $t'$ satisfy $h_{top}(T,\epsilon)<t'<t$. Since $h_{top}(T,\epsilon)$ is defined as the limsup of $\frac{1}{n}\log(\spa(T,\{0,\dots,n\},\epsilon)$ and this limsup is strictly smaller than $t'$, it follows that for all $n$ large enough we have $\spa(T,\{0,\dots,n-1\},\epsilon)\leq e^{nt'}$. Therefore
\[\limsup_{n\to\infty} \frac{\spa(T,\{0,\dots,n-1\},\epsilon)}{e^{nt}}\leq \limsup_{n\to\infty} \frac{e^{nt'}}{e^{nt}}=\lim_{n\to\infty}e^{n(t'-t)}=0.\]
According to the definition of upper slow entropy, this means that  $\upperent_{\mathbf{e}}(T,\epsilon)\leq t$. Since $t$ was an arbitrary value with $h_{top}(T)<t$, our claim that $\upperent_{\mathbf{e}}(T)\leq h_{top}(T)$ follows.  For the remaining inequality $h_{top}(T)\leq \lowerent_{\mathbf{e}}(T)$ one follows a similar argument, but instead one uses the fact that $h_{top}(T)$ equals the limit as $\epsilon\to^+0$ of $\liminf_{n\to\infty}\frac{1}{n}\log(\spa(T,\{0,\dots,n\},\epsilon)$.\end{proof} 
\subsection{Subshifts}\label{subsec:subshifts} A subshift on $\SE$ is a subset  $Y\subset \A^{\SE}$ which is topologically closed in the prodiscrete topology, and invariant for the shift map $\sigma\colon \A^{\SE}\to \A^{\SE}$,
\[x\mapsto \sigma(x), \ \sigma(x)(n)=x(n+1), \ \ \ n\in\SE.\]
A pattern with support $F\Subset\SE$ is a map $w\colon F\to \A$. We write $L_F(Y)=\{y|_F : y\in Y\}$. We also write $w\sqsubset y$ when the restriction of $y$ to $F$ equals $w$. The pattern $w$ defines the cylinder $[w]=\{y\in Y : w\sqsubset y\}$. A metric for the prodiscrete topology on $Y$ is given by
\[d(x,y)=\inf(\{2^{-n} : x(i)=y(i) \ \forall i\in \SE, \ |i|\leq n\} \cup \{2\})\]
We call this the standard metric for subshifts. 

Given $n\geq 1$ we write $L_n(Y)=L_{\{0,\dots,n-1\}}(Y)$. Given $s\geq 0$   the notation $L_{n,s}(Y)$ stands for $L_{\{-s,\dots,s+n-1\}}(Y)$ when $\SE=\Z$, and $L_{\{0,\dots,s+n-1\}}(Y)$ when $\SE=\N$.

\section{Sequence entropy and proof of \Cref{goodman-v2}}
In this section we prove \Cref{goodman-v2}. It will be convenient to fix for the rest of the section the objects with which we will work. Fix a compact metric space $(X,d)$, a (possibly non-invertible) continuous transformation $T\colon X\to X$,  a Borel $T$-invariant probability measure $\mu$ on $X$, and a sequence $A=(t_i)_{i\in\N}$ of natural numbers.

\begin{definition}
    For $F\Subset\N$, $\epsilon>0$, and $\delta\in (0,1)$, let $\spa(T,F,\epsilon,\delta)$ be the minimal number of Bowen $\epsilon,F$-balls needed to cover a subset of $X$ with measure $\mu$ at least $1-\delta$.
\end{definition}
A. Katok proved \cite[Theorem 1.1]{katok_lyapunov_1980} that if $\mu$ is ergodic and $T$ is invertible, then the measure-theoretic entropy $h_{\mu}(T)$ can be computed as 
\[h_\mu(T)=\lim_{\epsilon\to 0^+} \left(\limsup_n \frac{1}{n}\log \spa(T,\{0,\dots,n-1\},\epsilon,\delta)\right)\]
for any fixed  $\delta\in(0,1)$. This formula fails without assuming ergodicity. For instance, if $h_{\mu}(T)>0$  and $X$ has a subset with measure $\delta>0$ where $T$ acts as the identity function, then the formula at the right side will be $0$ and in particular different from $h_\mu(T)$.

In the next result we show that without the ergodicity assumption, if we take the limit $\delta\to 0$ in the formula above, then the inequality $\leq$ is still true. This holds in the more general context of sequence entropy. 
\begin{proposition}\label{katok-v2}
Let $\mu$ be a $T$-invariant Borel probability measure on $X$.  
Then the measure-theoretic sequence entropy $h^A_{\mu}(T)$ satisfies
    \[h^{A}_{\mu}(T)\leq \lim_{\delta\to^+0}\lim_{\epsilon\to 0^+}\left(\limsup_n \frac{1}{n}\log \spa(T,\{t_1,\dots,t_n\},\epsilon,\delta)\right).\]
\end{proposition}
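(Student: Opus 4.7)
The plan is to adapt Katok's classical argument for his generator-style entropy formula to the sequence-entropy setting while avoiding the ergodicity assumption. I fix an arbitrary finite Borel partition $\xi=\{P_1,\ldots,P_k\}$ and aim to bound $h_{\mu}^A(T,\xi)$. Given $\epsilon_0>0$, inner regularity produces compact $C_i\subset P_i$ with $\mu(P_i\setminus C_i)<\epsilon_0/k$. Set $Z=X\setminus\bigcup_i C_i$ (so $\mu(Z)<\epsilon_0$) and $\eta=\{C_1,\ldots,C_k,Z\}$; by compactness, $r:=\min_{i\neq j}d(C_i,C_j)>0$. A direct computation gives $H(\xi\mid\eta)\leq\mu(Z)\log k<\epsilon_0\log k$, and subadditivity of conditional entropy combined with $T$-invariance of $\mu$ yields, with $G_n=\{t_1,\ldots,t_n\}$,
\[H(\xi^{G_n})\leq H(\eta^{G_n})+n\epsilon_0\log k.\]

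Next I estimate $H(\eta^{G_n})$ via a spanning set. Fix $\epsilon<r/2$, $\delta>0$, and a set $E$ of cardinality $\spa(T,G_n,\epsilon,\delta)$ whose $\epsilon,G_n$-Bowen balls cover some $B\subset X$ with $\mu(B)\geq 1-\delta$. The geometric key is that if $\epsilon<r/2$ and $x\in B^B_{G_n}(y,\epsilon)$, then for each $i\in G_n$ the point $T^ix$ lies within $\epsilon$ of $T^iy$ and hence can meet at most one of the $r$-separated sets $C_1,\ldots,C_k$; the $\eta$-atom containing $T^ix$ is therefore either $Z$ or a single uniquely determined $C_{j(i,y)}$, giving two options per coordinate. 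Naively this produces $2^{|G_n|}$ atoms per ball, too many to be useful. The essential refinement uses $T$-invariance of $\mu$: for $N_Z(x):=|\{i\in G_n:T^ix\in Z\}|$ one has $\int N_Z\,d\mu\leq n\epsilon_0$, so Markov gives $\mu(\{N_Z>K\})\leq n\epsilon_0/K$; choosing $K=\lceil n\epsilon_0/\delta'\rceil$ bounds this by $\delta'$. Since $N_Z$ is constant on $\eta^{G_n}$-atoms, those with $N_Z\leq K$ that meet any given Bowen ball number at most $\sum_{j\leq K}\binom{n}{j}$.

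Splitting atoms of $\eta^{G_n}$ into three categories — (I) those with $N_Z\leq K$ meeting $B$, (II) those with $N_Z\leq K$ disjoint from $B$ (total measure $\leq\delta$), and (III) those with $N_Z>K$ (total measure $\leq\delta'$) — a standard conditioning-on-category bound and the trivial estimate $|\eta|=k+1$ yield
\[H(\eta^{G_n})\leq \log 3+\log\spa(T,G_n,\epsilon,\delta)+\log\sum_{j\leq K}\binom{n}{j}+(\delta+\delta')n\log(k+1).\]
Dividing by $n$ and using Stirling, $n^{-1}\log\sum_{j\leq n\epsilon_0/\delta'}\binom{n}{j}\to 0$ as $\epsilon_0/\delta'\to 0^+$. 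Taking $\limsup_n$ and passing to limits in exactly the order prescribed by the right-hand side of the statement — first $\epsilon\to 0^+$ (any $\epsilon<r(\epsilon_0)/2$ is admissible), then $\delta\to 0^+$, and finally $\epsilon_0,\delta'\to 0^+$ along a path with $\epsilon_0/\delta'\to 0$ (say $\epsilon_0=\delta'^2$) — all four error terms $\log 3/n$, $(\delta+\delta')\log(k+1)$, $\epsilon_0\log k$ and the Stirling term vanish, and a supremum over $\xi$ concludes.

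The main obstacle is precisely the factor $2^{|G_n|}$ from the two-choices-per-coordinate geometry between a Bowen ball and the atoms of $\eta^{G_n}$; a naive count would lose $\log 2$ per coordinate and ruin the estimate. The resolution is to discard the atoms with many coordinates landing in the \emph{trash} set $Z$: by $T$-invariance of $\mu$ these atoms, although numerous, carry negligible measure (via Markov), while the surviving atoms admit only a sub-exponential count per Bowen ball. This allows the bound to be established without recourse to the identity $h^A_\mu(T\times T)=2h^A_\mu(T)$ that underlies the earlier Goodman, Eberlein, and Huang--Ye arguments mentioned in the remark following \Cref{goodman-v2} and that is now known to fail.
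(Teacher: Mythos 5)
Your proof is correct, but it takes a genuinely different route from the paper's. The paper follows Katok's original scheme: it reduces to partitions $\xi$ with $\mu(\partial\xi)=0$, takes $\gamma$-neighborhoods $U_\gamma$ of the atom boundaries with $\mu(U_\gamma)<\epsilon^2$, shows via a Markov/invariance argument that outside a set of measure $\epsilon$ the orbit along $\{t_1,\dots,t_n\}$ visits $U_\gamma$ fewer than $n\epsilon$ times, deduces that a Bowen $\gamma$-ball intersected with this good set sits inside a Hamming pseudo-ball of radius $\epsilon$ for the partition metric, and then counts atoms per Hamming ball using the asymptotic cardinality formula \Cref{estimate-for-cardinality-of-hamming-balls}; the measure-$\delta$ leftover is absorbed by writing $\xi^{\{t_1,\dots,t_n\}}=\alpha_n\vee\beta_n$ and invoking \Cref{partition-with-one-very-big-atom}. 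You instead approximate $\xi$ from inside by compact sets, pay the price $nH(\xi\mid\eta)\leq n\epsilon_0\log k$ up front via the Misiurewicz-style conditional-entropy step, exploit the positive separation $r$ between the $C_i$ to force a two-choices-per-coordinate geometry, and replace the Hamming-ball count by the direct binomial bound $\sum_{j\leq K}\binom{n}{j}$ after discarding the low-measure atoms with many coordinates in $Z$. The two arguments share their essential mechanisms — a Markov bound on the number of ``ambiguous'' coordinates that is valid only because $\mu$ is $T$-invariant (this is exactly where ergodicity is circumvented in both proofs), and a coarse split of the partition to absorb the exceptional sets of measure $\delta$, $\delta'$ — and both avoid the faulty identity $h^A_\mu(T\times T)=2h^A_\mu(T)$. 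What your version buys: it works directly for an arbitrary finite Borel partition (no appeal to the existence of null-boundary partitions or to the continuity lemma of Kushnirenko used in the paper's reduction), and your per-ball count is slightly cleaner since the non-$Z$ coordinates are forced to a single atom rather than to one of $|\xi|-1$ alternatives. What the paper's version buys is the explicit Hamming pseudo-metric framework, which mirrors Katok's original formula and is reusable elsewhere. One cosmetic caution: your final bookkeeping of limits is more delicate than it needs to be; since $\spa(T,\{t_1,\dots,t_n\},\epsilon,\delta)$ is non-increasing in both $\epsilon$ and $\delta$, the iterated limit on the right-hand side is a supremum, so your inequality at any fixed admissible $(\epsilon,\delta,\epsilon_0,\delta')$ already bounds $h^A_\mu(T,\xi)$ by that supremum plus the error terms, and one simply sends $\delta,\delta'\to 0$ and $\epsilon_0=\delta'^2\to 0$ at the end.
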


In the proof of this result we will repeat some steps from A. Katok's proof in \cite[Theorem 1.1]{katok_lyapunov_1980} with obvious modifications. That is, we replace $\{0,\dots,n-1\}$ by $\{t_1,\dots,t_n\}$ as index set for Bowen and Hamming (pseudo-)metrics. We follow similar notations to simplify the comparison.  However, some important modifications are required to drop the ergodicity assumption. We will need the following elementary result. 
\begin{proposition}\label{partition-with-one-very-big-atom}
    Let $\mu$ be a Borel probability measure on $X$. 
    Let $\delta\in(0,1
    )$. Let $\beta$ be a finite measurable partition of $X$ with $k+1$ elements ($k\geq 1$) and suppose further that there is an atom $P\in\beta$ with $\mu(P)\geq 1-\delta$. Then $H(\beta)\leq 2/e+\delta\log(k)$. 
\end{proposition}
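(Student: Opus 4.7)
The plan is to do an elementary entropy calculation, splitting the contribution of the large atom from that of the small ones and applying two standard bounds.

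Write $\beta=\{P,Q_1,\dots,Q_k\}$ and set $s=1-\mu(P)=\sum_{i=1}^k\mu(Q_i)\leq\delta$. I would split
\[
H(\beta)=-\mu(P)\log\mu(P)+\sum_{i=1}^k-\mu(Q_i)\log\mu(Q_i)
\]
and handle the two parts separately. For the tail sum, I would factor $\mu(Q_i)=s\,p_i$ with $p_i=\mu(Q_i)/s$ (summing to $1$ over $i=1,\dots,k$), which gives
\[
\sum_{i=1}^k-\mu(Q_i)\log\mu(Q_i)=-s\log s-s\sum_{i=1}^k p_i\log p_i \leq -s\log s + s\log k,
\]
using the standard fact that the entropy of a probability vector on $k$ atoms is at most $\log k$.

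Combining this with $-\mu(P)\log\mu(P)=-(1-s)\log(1-s)$, I obtain
\[
H(\beta)\leq \bigl[-(1-s)\log(1-s)-s\log s\bigr]+s\log k.
\]
The bracketed expression is the binary entropy $H_2(s)$. The key analytic input is the well-known bound $-x\log x\leq 1/e$ for $x\in[0,1]$ (attained at $x=1/e$), applied to both $x=s$ and $x=1-s$, which yields $H_2(s)\leq 2/e$. Finally, since $s\leq\delta$ and $\log k\geq 0$ (using $k\geq 1$), we get $s\log k\leq\delta\log k$, and the claim follows.

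There is no real obstacle here; the only thing to be careful about is the case $k=1$ (where $\log k=0$ so the second atom contribution is bounded by $-s\log s\leq 1/e$, which is already absorbed into $2/e$) and the boundary cases $s=0$ or $s=1$, which are handled by the convention $0\log 0=0$.
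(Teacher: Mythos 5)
Your proof is correct and follows essentially the same route as the paper: both arrive at the identical intermediate bound $-(1-s)\log(1-s)-s\log s+s\log k$ (you via the factorization $\mu(Q_i)=sp_i$ and the bound $\log k$ on the entropy of a $k$-atom probability vector, the paper via Jensen/concavity showing the tail is maximized at equal masses --- the same fact), and both then finish with $-x\log x\leq 1/e$ applied twice and $s\leq\delta$. Your handling of the edge cases $k=1$ and $s\in\{0,1\}$ is a welcome extra detail but does not change the argument.
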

\begin{proof}
    Let $P\in\beta$ be the atom in the statement, and let $\delta_0$ satisfy $1-\delta_0=\mu(P)\geq 1-\delta$.  Let $P_1,\dots,P_k$ be the rest of the atoms so that $\sum_{i=1}^k\mu(P_i)=\delta_0$. The standard argument using Jensen's inequality and the concavity of the map $f(x)=-x\log(x)$ for $x\in [0,1]$ shows that $H(\beta)$ is maximized when all atoms $P_1\dots,P_k$ have equal measure $\delta_0/k$, and therefore \(H(\beta)\leq -\mu(P)\log\mu(P)-\sum_{i=1}^k \delta_0/k\log(\delta_0/k) 
    =-(1-\delta_0)\log(1-\delta_0)+-\delta_0\log(\delta_0)+\delta_0\log(k)\)
    We omit this computation since the argument is well-known. The reader is referred to   \cite[page 231]{petersen_ergodic_1983}. Furthermore, an elementary computation using derivatives shows that the function $f(x)=-x\log(x)$ over $[0,1]$ attains a maximum at $f(1/e)=1/e$ (see \cite[Page 79]{walters_introduction_2000}). Also note that $\delta_0\leq \delta$. Thus the inequality above becomes $H(\beta)\leq 1/e+1/e+\delta\log(k)$. \end{proof}
    We  now review some facts about Hamming (pseudo-) metrics that will be used in the proof of \Cref{katok-v2}. Let $F$ be a finite set and let $n\in\N$. Consider the set  $F^n$ of $n$-tuples of elements in $F$ endowed with the standard Hamming metric \[d^H(u,v)=\frac{1}{n}|\{i\in\{0,\dots,n-1\} : u_i\ne v_i\}|, \ \ \ u=(u_i)_{i=0}^{n-1}, \ v=(v_i)_{i=0}^{n-1}\in F^n.\]
    We simply write $d^H$ without reference to $F^n$ as there is no risk of ambiguity. The corresponding Hamming ball in the metric space $(F^n,d^H)$ is denoted
    \[
    B^H_{F^n}(u,r)=\{v\in F^n : d^H(u,v)<r\}, \ \ \ r>0.
    \]
    Since  the cardinality of this ball is independent of $u$ we may simply write $|B^H_{F^n}(u,r)|=|B^H_{F^n}(r)|$. In \cite[Equation 1.3]{katok_lyapunov_1980} it is proved the following estimate for $0<r<\frac{1}{|F|}(|F|-1)$:  \begin{equation}\label{estimate-for-cardinality-of-hamming-balls}
    \lim_{n\to\infty}\frac{1}{n}\log|B^H_{F^n}(r)|=r\log(|F|-1)-r\log r-(1-r)\log(1-r).
    \end{equation}
Now let $\xi$ be a finite and measurable partition for $X$. We consider the finite set $\xi^n$ with the Hamming metric $d^H$ defined before, and we use this to define a family of pseudometrics on $X$ associated to $T$,  $\xi$,  and the sequence $A=\{t_1,\dots,\}$. 

Given $x\in X$ we denote by  $\xi(x)$  the atom from $\xi$ to which $x$ belongs. We denote by $c^\xi_n(x)$ the ``code'' $(\xi(T^{t_1}x),\dots,\xi(T^{t_n} x))\in\xi^n$. We define the pseudo-metric $d^{H,\xi}_{\{t_1,\dots,t_n\}}$ on $X$  by \[d^{H,\xi}_{\{t_1,\dots,t_n\}}(x,y)=d^H(c^\xi_n(x),c^\xi_n(y)).\] 
    Given $x\in X$ and $r>0$ we define the corresponding Hamming pseudo-ball:
    \[B^{H,\xi}_{\{t_1,\dots,t_n\}}(x,r)=\{y\in X : d^{H,\xi}_{\{t_1,\dots,t_n\}}(x,y)<r\}.\]
    In this manner two elements $x,y$ are close in the pseudo-metric $d^{H,\xi}_{\{t_1,\dots,t_n\}}$ when their iterates $T^{t_1},\dots,T^{t_n}$ are Hamming-close according to $\xi$. 

\begin{proof}[Proof of \Cref{katok-v2}]
    Observe that $h^{A}_{\mu}(T)$ equals the supremum of $h^{A}_{\mu}(T,\xi)$, with $\xi$ ranging over all Borel-measurable finite partitions of $X$ such that $\mu(\partial\xi)=0$, where $\partial \xi$ is the union of the boundaries $\partial P$ of the atoms  $P\in \xi$. Indeed, we can choose $\xi$ such that $\partial \xi=0$ and with diameter as small as we want \cite[Lemma 8.5]{walters_introduction_2000}. This ensures that the supremum is attained by \cite[Lemma 2]{kushnirenko_metric_1967}. We note that \cite[Lemma 2]{kushnirenko_metric_1967} is stated for an invertible transformation, but this assumption is not used in the proof. 
    
    Thus in order to  prove \Cref{katok-v2} it suffices to fix an arbitrary  finite Borel measurable partition $\xi$ for $X$ with $\mu(\partial \xi)=0$ and   prove  \begin{equation}\label{katok-v2-inequality-leq}
        h^{A}_{\mu}(T,\xi)\leq \lim_{\delta\to^+0}\lim_{\epsilon\to 0^+}\left(\limsup_n \frac{1}{n}\log \spa(T,\{t_1,\dots,t_n\},\epsilon,\delta)\right).
\end{equation} 
    Fix $\delta\in(0,1)$, $\epsilon\in (0,\frac{|\xi|-1}{|\xi|})$, and $n\geq 1$. For each $\gamma>0$ we define \[U_\gamma(P)=\{x\in P : \exists y\in X\smallsetminus P, d(x,y)<\gamma)\}, \ P\in \xi\]
    and $U_\gamma(\xi)=\bigcup_{P\in\xi}U_\gamma(P)$.
    Since $\xi$ is chosen with $\mu(\partial\xi)=0$, it follows that 
    \[
    0=\mu(\partial \xi)=\mu(\bigcap _{\gamma>0} U_{\gamma})=\lim_{\gamma\to 0}\mu(U_\gamma)\]
    Therefore we can choose $\gamma=\gamma(\epsilon)$ small enough such that  $\mu(U_\gamma(\xi))<\epsilon^2$ and $\gamma<\epsilon$.       
    
    Let $\chi_\gamma$ be the characteristic function of $U_\gamma$, and let $B_{n,\epsilon}$ be the set of elements in $X$ such that $\sum_{i=1}^n \chi_{\gamma}(T^{t_i}(x))<n\epsilon$. 
    We claim that  $\mu(X\smallsetminus B_{n,\epsilon})<\epsilon$. Indeed, since $T$ preserves $\mu$ and $\sum_{i=1}^n \chi_{\gamma}(T^{t_i}(x))$ is larger than $n\epsilon$ when $x\in X\smallsetminus B_{n,\epsilon}$, we have 
    \(\mu(X\smallsetminus B_{n,\epsilon}) n\epsilon  \leq \int_{X\smallsetminus B_{n,\epsilon}} \sum_{i=1}^n \chi_{\gamma}(T^{t_i}(x)) d\mu(x)
    \leq n \int_{X\smallsetminus B_{n,\epsilon}} \chi_{\gamma} d\mu
    \leq n \mu (U_{\gamma})=n\epsilon^2.\)
    
    Thus $\mu(X\smallsetminus B_{n,\epsilon})<\epsilon$.
    
    Now let  $x,y \in X$ with $d^B_{\{t_1,\dots,t_n\}}(x,y)<\gamma$. The definition of $U_\gamma$ ensures that for every $i=1,\dots,n$ such that $T^{t_i}(x)$ and $T^{t_i}(y)$ belong to a different atom from $\xi$, we must have $T^{t_i}(x),T^{t_i}(y)\in U_\gamma$.  If $x$ is chosen in $B_{n,\epsilon}$ then $T^{t_i}x\in U_\gamma$ is possible for at most $\epsilon n$ values of  $i\in\{1,\dots,n\}$, and for all other values of $i$ we must have $\xi(T^{t_i} x)=\xi(T^{t_i}y)$. Therefore the conditions $d^B_{\{t_1,\dots,t_n\}}(x,y)<\gamma$ and $x\in B_{n,\epsilon}$ imply $d^{H,\xi}_{\{t_1,\dots,t_n\}}(x,y)<\epsilon$.

    The observation in the previous paragraph shows that for every $y\in X$,  the intersection of $B_{n,\epsilon}$ with the Bowen ball 
    $B^B_{\{t_1,\dots,t_n\}}(y,\gamma)$
    is contained in the Hamming pseudo-ball $B^{H,\xi}_{\{t_1,\dots,t_n\}}(y,\epsilon)$. That is,
    \begin{equation}\label{bowen-contained-in-hamming}
    B_{n,\epsilon}\cap B^B_{\{t_1,\dots,t_n\}}(y,\gamma)\subset B^{H,\xi}_{\{t_1,\dots,t_n\}}(y,\epsilon).\end{equation}
    Let $\mathfrak U$ be a collection of elements in $X$ such that \[|\mathfrak U|=\spa(T,\{t_1,\dots,t_n\},\gamma,\delta),\] 
    \[\mu(\bigcup_{y\in\mathfrak U}B^B_{\{t_1,\dots,t_n\}}(y,\gamma))\geq 1-\delta.\]
    Since we proved that $\mu(B_{n,\epsilon})\geq 1-\epsilon$, it follows that
    \[\mu(B_{n,\epsilon}\cap \bigcup_{y\in\mathfrak U}B^B_{\{t_1,\dots,t_n\}}(y,\gamma))\geq 1-\delta-\epsilon.\]
    Then by \Cref{bowen-contained-in-hamming} we also have   
    \[\mu(\bigcup_{y\in\mathfrak U}B^{H,\xi}_{\{t_1,\dots,t_n\}}(y,\epsilon))\geq 1-\delta-\epsilon.\]
    Let us now introduce a convenient notation. If $\alpha$ is a finite measurable partition of $X$, then $N(\alpha,\delta)$ denotes the minimal number of atoms from $\alpha$ needed to cover a subset of $X$ with measure at least $1-\delta$. Since each Hamming pseudo-ball $B^{H,\xi}_{\{t_1,\dots,t_n\}}(y,\epsilon)$ is equal to the union of at most $|B^H_{\xi^n}(\epsilon)|$ atoms of the partition $\xi^{\{t_1,\dots,t_n\}}$, the last inequality shows that
    \[N(\xi^{\{t_1,\dots,t_n\}},\delta+\epsilon)\leq\spa(T,\{t_1,\dots,t_n\},\gamma,\delta) |B^H_{\xi^n}(\epsilon)|.\]
    Let $X_n\subset X$ be a subset with $\mu(X_n)\geq 1-\delta-\epsilon$ and such that $X_n$ is the union of exactly $N(\xi^{\{t_1,\dots,t_n\}},\delta+\epsilon)$ atoms from $\xi^{\{t_1,\dots,t_n\}}$. We now define two Borel measurable partitions $\alpha_n$ and $\beta_n$ of $X$ such that $\xi^{\{t_1,\dots,t_n\}}=\alpha_n\vee\beta_n$. We define $\alpha_n$ as the partition obtained by taking  $\xi^{\{t_1,\dots,t_n\}}$, and collapsing all atoms that do not intersect $X_n$. Similarly, we define $\beta_n$  as the partition obtained by taking  $\xi^{\{t_1,\dots,t_n\}}$, and collapsing the set $X_n$ to a single atom.     By definition of $\alpha_n$, it has at most $\spa(T,\{t_1,\dots,t_n\},\gamma,\delta) |B^H_{\xi^n}(\epsilon)|+1$ atoms. By the trivial bound $H(\alpha_n)\leq \log|\alpha_n|$ and \Cref{estimate-for-cardinality-of-hamming-balls} we have 
    \(\limsup_n \frac{1}{n}H(\alpha_n)\leq \limsup_n \frac{1}{n}\log(\spa(T,\{t_1,\dots,t_n\},\gamma,\delta))+\epsilon \log(|\xi|-1)-\epsilon\log ( \epsilon)-(1-\epsilon)\log(1-\epsilon)\)
    On the other hand, since $\beta_n$ has the atom $X_n$ with measure at least $1-\delta-\epsilon$,  \Cref{partition-with-one-very-big-atom} shows that
    \[
    \limsup_n \frac{1}{n} H(\beta_n)\leq \limsup_n \frac{2/e+(\delta+\epsilon)\log|\beta_n|}{n}.
    \]
    Since $|\beta_n|\leq |\xi^{\{t_1,\dots,t_n\}}|\leq |\xi|^n$, this limsup is at most $(\delta+\epsilon)\log|\xi|$. Since $\xi^{\{t_1,\dots,t_n\}}=\alpha_n\vee \beta_n$ and $H(\xi^{\{t_1,\dots,t_n\}})\leq H(\alpha_n)+H(\beta_n)$, it follows that
    \(
    h^{A}_{\mu}(T,\xi)=
    \limsup_n \frac{1}{n}H(\xi^{\{t_1,\dots,t_n\}})\leq 
    \limsup_n \frac{1}{n}\log(\spa(T,\{t_1,\dots,t_n\},\gamma,\delta))+\epsilon \log(|\xi|-1)-\epsilon\log ( \epsilon)-(1-\epsilon)\log(1-\epsilon)+(\delta+\epsilon)\log|\xi|.\)
    The elements in the last row vanish when $\epsilon$ and $\delta$ tend to $0$. Moreover, this inequality holds for every $\delta\in(0,1)$, $\epsilon\in (0,\frac{|\xi|-1}{|\xi|})$, and $\gamma=\gamma(\epsilon)$ with  $\mu(U_\gamma)<\epsilon^2$.   This  proves  \Cref{katok-v2-inequality-leq}.   
\end{proof}
We are ready to prove part (1) of \Cref{goodman-v2}.
\begin{proof}[Proof of \Cref{goodman-v2}, part (1)]
It is clear that for every $\epsilon>0$ and $\delta\in(0,1)$ we have \[\spa(T,\{t_1,\dots,t_n\},\epsilon,\delta)\leq \spa(T,\{t_1,\dots,t_n\},\epsilon).\]
Therefore by \Cref{katok-v2} and \Cref{sequence-entropy-spanning-sets} we have
\(
h^{A}_{\mu}(T)\leq \lim_{\delta\to^+0}\lim_{\epsilon\to 0^+}\left(\limsup_n \frac{1}{n}\log \spa(T,\{t_1,\dots,t_n\},\epsilon,\delta)\right)\leq\lim_{\epsilon\to 0^+}\left(\limsup_n \frac{1}{n}\log \spa(T,\{t_1,\dots,t_n\},\epsilon)\right) = h^A_{top}(T).\)\end{proof}
As mentioned before, in \Cref{goodman-v2} we derive (2) and (3) from (1). We note that this argument is essentially the same as in Goodman's work \cite{goodman_topological_1974}, and we provide the details for completeness. 
\begin{proof}[Proof of \Cref{goodman-v2}, part (3)]
    Recall that given the sequence $A=\{t_1,\dots\}$, the constant $K(A)$ is defined in 
\Cref{def:K}. We assume that either $K(A)<\infty$ or $h^A_{top}(T)>0$. We must prove 
\begin{equation}\label{topological-krug-newton-2}
    h^A_{top}(T)= \begin{cases}
        K(A)h_{top}(T) &K(A)<\infty, \  h_{top}(T)<\infty\\
        \infty & K(A)=\infty, \ h_{top}(T)>0\\
        \infty & K(A)>0, \ h_{top}(T)=\infty\\
        0&  K(A)=0,  \ h_{top}(T)=\infty.
       \end{cases}
\end{equation}
The inequality $\leq$ in the first and fourth case is proved by Goodman in \cite[Lemma 4.2]{goodman_topological_1974}. Moreover, the second and third cases hold for trivial reasons. Thus we only need to prove the inequality $\geq$. 

Lemma 4.3 in \cite{goodman_topological_1974}	states that for every $\mu\in M_T(X)$ we have
\begin{equation}\label{krug-newton-2}
		h^{A}_{\mu}(T) \geq 
		\begin{cases}
			K(A)h_{\mu}(T) &K(A)<\infty, \  h_{\mu}(T)<\infty\\
			\infty & K(A)=\infty, \  h_{\mu}(T)>0\\
			\infty & K(A)>0, \  h_{\mu}(T)=\infty\\
			0&  K(A)=0,  \  h_{\mu}(T)=\infty.
		\end{cases}
\end{equation}
In order to prove \Cref{topological-krug-newton-2}, let us assume first that we are in the case ($K(A)=\infty$ and $h_{top}(T)<\infty$). We have: 
\(
h^A_{top}(T)\geq \sup _{\mu\in M_T(X)} h^{A}_{\mu}(T)
\geq \sup _{\mu\in M_T(X)}K(A)h_\mu(T) \geq K(A)\sup _{\mu\in M_T(X)}h_\mu(T)=K(A)h_{top}(T)
\)
The first inequality is by (1) in \Cref{goodman-v2}, which we already proved. The second inequality is \Cref{krug-newton-2}. Since we assumed $h_{top}(T)<\infty$, the standard variational principle shows that $h_\mu(T)<\infty$ for every $\mu\in M_T(X)$.  Thus \Cref{krug-newton-2} is used in the first case, which says  $h^{A}_{\mu}(T)\geq K(A)h_{\mu}(T)$. The third inequality is clear, and the last inequality is the standard variational principle. Thus we have proved the claim in the case ($K(A)=\infty$ and $h_{top}(T)<\infty$). The remaining cases can be proved with a similar argument.
\end{proof}
\begin{proof}[Proof of \Cref{goodman-v2}, part (2)]
We must prove that unless ($K(A)=\infty$ and $h_{top}(T)=0$), the variational principle holds for sequence entropy:
\begin{equation}\label{sequential-variational-principle-proof}
    \sup_{\mu\in M_T(X)} h^{A}_{\mu}(T) = h^A_{top}(T)
\end{equation}
We already proved the inequality $\leq$, so we just need to focus on the inequality $\geq$. As before, we will consider cases according to the values of $K(A)$ and $h_{top}(T)$.

Let us first consider the case ($h_{top}(T)<\infty$ and $h_{top}(T)<\infty$). We have:
\(
\sup_{\mu\in M_T(X)}h^{A}_{\mu}(T)\geq \sup _{\mu\in M_T(X)} K(A)h_{\mu}(T)=K(A)\sup _{\mu\in M_T(X)} h_{\mu}(T)=K(A)h_{top}(T)=h^A_{top}(T)
\)
The first inequality is by \Cref{krug-newton-2}. Observe that by our assumption $h_{top}(T)<\infty$ and by the standard variational principle, we have $h_\mu(T)<\infty$ for every  $\mu\in M_T(X)$. Therefore we are applying the first case of \Cref{krug-newton-2}, which says $h^{A}_{\mu}(T)\geq K(A)h_{\mu}(T)$. The third equality is by the standard variational principle, and the last equality is by part (3) of \Cref{goodman-v2}, which we already proved. 

Thus we have verified the inequality $\geq$ for \Cref{sequential-variational-principle-proof} in the case ($h_{top}(T)<\infty$ and $h_{top}(T)<\infty$). The remaining cases ($K(A)=\infty$ and $h_{top}(T)>0$), ($K(A)>0$ and $h_{top}(T)=\infty$) and ($K(A)=0$ and $h_{top}(T)=\infty$) follow by the same argument. 
\end{proof}
\begin{remark}\label{remark:explanation-goodman-proof}
Goodman's proof of part (1) in \Cref{goodman-v2} is similar in ideas to the well-known proof of the variational principle of Misiurewicz \cite{misiurewicz_short_nodate}, see also \cite{petersen_ergodic_1983,walters_introduction_2000}. That is, by approximating a measurable partition by open (or closed) covers one first establishes a relation of the form $h_{\mu}(T)\leq h_{top}(T)+C$, for some constant $C>0$. Then one replaces $T$ by its powers  $T^n$ or products $T\times\dots\times T$ to remove the constant $C$. The second step is not possible for sequence entropy, which may not behave like entropy with respect to powers and products \cite{lemanczyk_sequence_1985,hulse_counterexamples_2009}. We also note that the argument using Hamming (pseudo)-balls can be  generalized to prove that measure-theoretic entropy is at most topological entropy in the context of amenable group actions (compare with \cite[Theorem 9.48]{kerr_ergodic_2016}).
\end{remark}
\begin{remark}
    In the next section we will prove  estimates for $\spa(T,F,\epsilon)$ for an invertible transformation $T$ and for a set $F\Subset\Z$ (\Cref{estimate}). These can be interpreted as a finitary version of part (3) in \Cref{goodman-v2}. Indeed, an alternative proof of part (3) in \Cref{goodman-v2} can be obtained by taking $F=\{t_1,\dots,t_n\}$ in \Cref{estimate}, item (2). 
\end{remark}
\section{Proof of \Cref{thm:entropy} and \Cref{thm:slow-entropy}}\label{sec:slow-entropy}
In this section we prove our results regarding topological slow entropy of skew products, \Cref{thm:entropy} and \Cref{thm:slow-entropy}. We recall that notation and terminology for Bowen metrics and subshifts are reviewed in \Cref{sec:preliminaries}.

\subsection{Estimating  $\spa(T,F,\epsilon)$}\label{sec:estimates}
In this subsection we present some preliminary computations that will be needed for the proof of \Cref{thm:entropy}.

Let $T$ be a continuous and invertible transformation of the compact metric space $(X,d)$. Here we prove estimates for $\spa(T,F,\epsilon)$, where $F\Subset \Z$ is not necessarily an interval, but its ``gaps'' are bounded by a constant $m$. 

\begin{definition}\label{constant-C_m} Let $m\in\N$. A set $F\Subset \Z$ is said to have $m$-\textbf{bounded gaps} if the set $F+\{0,\dots,m-1\}$ is an interval. We define $C_m(F)\in\R $ by $
C_m(F)=|F+\{0,\dots,m-1\}|/|F|$.
\end{definition} The main result of this subsection is the following.
\begin{proposition}\label{estimate}
Fix $m\in\N$. Suppose that $h_{top}(T)<\infty$. Then for every  $\epsilon>0$ we can choose $\delta\in (0,\epsilon)$ and $n_0\in\N$ such that
\begin{enumerate}
    \item $\frac{1}{|F|}\log \spa(T,F,\delta)\leq C_m(F) (h_{top}(T)+\epsilon)$ for every $F\Subset\Z$ with $m$-bounded gaps and $|F|\geq n_0$,
    \item $
    \frac{1}{|F|}\log \spa(T,F,\delta)\geq C_m(F) (h_{top}(T)-\epsilon)$ for every $F\Subset\Z$.
\end{enumerate}
    If $h_{top}(T)=\infty$ then for every $N\in\N$ we can choose $\delta>0$ such that $C_m(F)\cdot N\leq \frac{1}{|F|}\log \spa(T,F,\delta)$ for every $F\Subset\Z$.
\end{proposition}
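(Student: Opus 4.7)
Both bounds reduce to the interval case by combining the monotonicity properties (\Cref{monotony-properties-of-spa}) with a uniform-continuity pairing of scales $\delta$ and $\delta'$.

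For the upper bound (1), the $m$-bounded gaps hypothesis forces $I := F+\{0,\dots,m-1\}$ to be a genuine interval of length $|I|=C_m(F)|F|$. Since $F \subset I$, part (1) of \Cref{monotony-properties-of-spa} gives $\spa(T,F,\delta) \leq \spa(T,I,\delta)$, and translation invariance identifies this with $\spa(T,\{0,\dots,|I|-1\},\delta)$. From the definition $h_{top}(T) = \lim_{\delta\to 0}\limsup_n \tfrac{1}{n}\log\spa(T,\{0,\dots,n-1\},\delta)$, I would pick $\delta\in(0,\epsilon)$ with $h_{top}(T,\delta)<h_{top}(T)+\epsilon/2$ and then $n_0$ such that $\spa(T,\{0,\dots,n-1\},\delta)\leq e^{n(h_{top}(T)+\epsilon)}$ for all $n\geq n_0$. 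Requiring $|F|\geq n_0$ forces $|I|\geq n_0$, and dividing by $|F|$ yields (1).

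For the lower bound (2), uniform continuity of $T^0,T^1,\dots,T^{m-1}$ produces, for every $\delta'>0$, a $\delta\in(0,\delta')$ such that $d(x,y)\leq\delta$ implies $d(T^i x,T^i y)\leq\delta'$ for every $i\in\{0,\dots,m-1\}$. Consequently any $(F,\delta)$-spanning set is automatically an $(I',\delta')$-spanning set, where $I' := F+\{0,\dots,m-1\}$, so $\spa(T,F,\delta)\geq\spa(T,I',\delta')$. When $I'$ is an interval (the bounded-gaps situation, which is the one used in applications) of length $C_m(F)|F|$, the $\liminf$-characterization $h_{top}(T)=\lim_{\delta'\to 0}\liminf_n \tfrac{1}{n}\log\spa(T,\{0,\dots,n-1\},\delta')$ used in the proof of \Cref{prop:slow-entropy-with-exponential-scale-is-entropy} lets me pick $\delta'$ and a threshold $n_1$ such that $\spa(T,\{0,\dots,n-1\},\delta')\geq e^{n(h_{top}(T)-\epsilon)}$ for $n\geq n_1$; together with translation invariance, this handles every $F$ with $C_m(F)|F|\geq n_1$. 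For short $F$ the exponent is bounded by the uniform constant $mn_1(h_{top}(T)-\epsilon)$, so I would shrink $\delta$ once more until $\spa(T,\{0\},\delta)\geq e^{mn_1(h_{top}(T)-\epsilon)}$; monotonicity and translation invariance then give $\spa(T,F,\delta)\geq\spa(T,\{0\},\delta)$, covering the remaining $F$. The infinite-entropy case runs identically with $N$ replacing $h_{top}(T)-\epsilon$, since $h_{top}(T)=\infty$ makes the $\liminf$-limit in $\delta'$ equal to $+\infty$.

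\textbf{Main obstacle.} The principal delicacy is sequencing the three constraints placed on $\delta$ in part (2): the uniform-continuity relation with $\delta'$; the margin $\epsilon$ in the interval bound at scale $\delta'$; and dominating $e^{mn_1(h_{top}(T)-\epsilon)}$ to absorb the short-$F$ regime. The correct order is to fix $\delta'$ first from the $\liminf$-characterization, then $\delta$ from uniform continuity, and finally to shrink $\delta$ once more to exceed the short-$F$ threshold.
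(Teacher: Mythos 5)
Your part (1) is essentially the paper's argument: pass to the interval $I=F+\{0,\dots,m-1\}$, use monotonicity and translation invariance, and the $\limsup$ characterization of $h_{top}(T,\delta)$. No issues there.

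Part (2), however, has a genuine gap. The proposition asserts the lower bound $\frac{1}{|F|}\log\spa(T,F,\delta)\geq C_m(F)(h_{top}(T)-\epsilon)$ for \emph{every} $F\Subset\Z$, with no bounded-gaps hypothesis, and this generality is actually used in the paper (the corollary on non-F\o lner sequences applies part (2) to an arbitrary sequence $(F_n)$, whose members need not have $m$-bounded gaps; note also that $C_m(F)$ is defined for all $F$). Your uniform-continuity step correctly yields $\spa(T,F,\delta)\geq\spa(T,F+\{0,\dots,m-1\},\delta')$, but you then need $F+\{0,\dots,m-1\}$ to be an interval in order to invoke the $\liminf$ characterization of entropy via $\spa(T,\{0,\dots,n-1\},\delta')$ and translation invariance; you acknowledge this by restricting to "the bounded-gaps situation." For a general finite set $F'$ there is no elementary way to bound $\spa(T,F',\delta')$ from below by $e^{|F'|(h_{top}(T)-\epsilon)}$: the interval-based definition of entropy gives no control over such sets, and the fact that $\sup_{\mathcal U}\inf_{F'\Subset\Z}\frac{1}{|F'|}\log N(\mathcal U^{F'})$ still equals $h_{top}(T)$ is precisely the nontrivial "infimum rule" of Downarowicz--Frej--Romagnoli (\Cref{downarowicz}) that the paper's proof is built on. The paper chooses a cover $\mathcal V$ with $\inf_{F'}\frac{1}{|F'|}\log N(\mathcal V^{F'})>h_{top}(T)-\epsilon$, sets $\mathcal U=\mathcal V^{\{0,\dots,m-1\}}$ so that $\mathcal U^{F}=\mathcal V^{F+\{0,\dots,m-1\}}$, and converts to spanning sets via the Lebesgue number; your proof needs this ingredient (or an equivalent) to cover general $F$. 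Your handling of the short-$F$ regime by shrinking $\delta$ until $\spa(T,\{0\},\delta)$ is large is workable (it is vacuous when $h_{top}(T)\leq\epsilon$ and otherwise $X$ is infinite, so the covering number diverges), but it does not repair the main omission.
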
 
One easily derives an analogous statement in terms of open covers using \Cref{wellknown}. For the proof we we will use the following result of Downarowicz, Frej, and  Romagnoli, which is known as the infimum rule. 
\begin{proposition}[Theorem 6.8 in \cite{kolyada_shearers_2016}]\label{downarowicz}
The topological entropy $h_{top}(T)$ satisfies
\[\label{eq:downarowicz}
        h_{top}(T)=\sup_{\mathcal U}\inf_{F\Subset \Z}\frac{1}{|F|}\log(N(\mathcal{U}^F)).
\]
    Here $\U$ ranges over finite open covers for $X$.
\end{proposition}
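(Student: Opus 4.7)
The plan is to prove the two inequalities separately. The easy direction, $\sup_{\mathcal U}\inf_{F\Subset\Z}\frac{1}{|F|}\log N(\mathcal U^F)\leq h_{top}(T)$, follows by fixing any finite open cover $\mathcal U$ and testing the intervals $F=\{0,\dots,n-1\}$: this gives $\inf_F\frac{1}{|F|}\log N(\mathcal U^F)\leq \frac{1}{n}\log N(\mathcal U^{\{0,\dots,n-1\}})$ for every $n$, and letting $n\to\infty$ yields an upper bound of $h_{top}(T,\mathcal U)$, which becomes $h_{top}(T)$ after taking the supremum over $\mathcal U$.

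The substantive direction is to show $h_{top}(T,\mathcal U)\leq \inf_{F\Subset\Z}\frac{1}{|F|}\log N(\mathcal U^F)$ for every fixed $\mathcal U$. The key tool is a Shearer-type inequality for open covers: whenever $E\Subset\Z$ is covered by subsets $F_1,\dots,F_s$ with every element of $E$ lying in at least $k$ of the $F_i$,
\[k\log N(\mathcal U^E)\leq \sum_{i=1}^{s}\log N(\mathcal U^{F_i}).\]
This parallels the classical Shearer inequality for Shannon entropy of random variables, but for open covers it requires a direct combinatorial construction that produces, from minimal subcovers of each $\mathcal U^{F_i}$, a subcover of $\mathcal U^E$ of the right size; this is the content of the Downarowicz--Frej--Romagnoli paper.

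Granting Shearer's inequality, I would argue as follows. Fix $F\Subset\Z$ with $|F|=m$. After translating (which does not change $N(\mathcal U^F)$, since invertibility of $T$ makes $T^{-t}$ a homeomorphism and hence $N(\mathcal U^{F+t})=N(T^{-t}\mathcal U^F)=N(\mathcal U^F)$), I may assume $0=\min F$ and $M=\max F$. For large $n$, consider the family of translates $\{F+t:0\leq t\leq n-1\}$. A direct check shows that each $j\in E:=\{M,\dots,n-1\}$ lies in exactly $m$ of these translates, namely those with $t\in j-F\subseteq\{0,\dots,n-1\}$. Applying Shearer's inequality to $E$ with this family at multiplicity $k=m$, and then using shift-invariance of $N(\mathcal U^{\cdot})$ on the right, gives
\[m\log N(\mathcal U^{\{M,\dots,n-1\}})\leq n\log N(\mathcal U^F).\]
Rearranging and invoking shift-invariance once more on the left,
\[\frac{1}{|F|}\log N(\mathcal U^F)\geq \frac{1}{n}\log N(\mathcal U^{\{0,\dots,n-M-1\}}).\]
Letting $n\to\infty$ (with $F$ fixed, so $M$ is fixed), the right-hand side tends to $h_{top}(T,\mathcal U)$, establishing the inequality uniformly in $F$. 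Taking the infimum over $F$ and then the supremum over $\mathcal U$ completes the proof.

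The main obstacle is the Shearer-type inequality for open covers. The Shannon-entropy version is classical and transfers immediately to measure-theoretic entropy through invariant measures, but the topological-cover analog does not follow by a measure-theoretic detour: it requires an explicit argument producing small subcovers of $\mathcal U^E$ from small subcovers of the $\mathcal U^{F_i}$, which is the technical heart of the cited result.
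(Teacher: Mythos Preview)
The paper does not prove this proposition at all: it is quoted verbatim as Theorem~6.8 of Downarowicz--Frej--Romagnoli and used as a black box in the proof of \Cref{estimate}. Your outline, by contrast, sketches an actual proof, and it is essentially the argument of the cited reference: the easy direction via testing intervals, and the hard direction via the Shearer-type inequality for open covers applied to translates of a fixed $F$ covering a long interval with multiplicity $|F|$. Your identification of the Shearer inequality for covers as the one nontrivial ingredient, and your deferral of its proof to the cited paper, is accurate.

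One small technical point worth tightening: in the standard formulation of Shearer's inequality for covers, the sets $F_1,\dots,F_s$ are taken to be subsets of $E$, whereas your translates $F+t$ need not lie inside $E=\{M,\dots,n-1\}$. This is harmless, since replacing each $F+t$ by $(F+t)\cap E$ preserves the multiplicity count for elements of $E$, and monotonicity gives $\log N(\mathcal U^{(F+t)\cap E})\leq \log N(\mathcal U^{F+t})=\log N(\mathcal U^F)$; but it is worth saying explicitly. With that adjustment your derivation of $m\log N(\mathcal U^{\{M,\dots,n-1\}})\leq n\log N(\mathcal U^F)$ and the passage to the limit are correct.
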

\begin{proof}[Proof of \Cref{estimate}]
    Let $\epsilon>0$. We start by proving the statement in the case $h_{top}(T)<\infty$. We start by choosing $\delta_1$ so that for every $\delta<\delta_1$ we have
\[h_{top}(T,\delta)=\limsup_{n\to\infty} \frac{1}{n} \log \spa(T,\{0,\dots,n-1\},\delta)<h_{top}(T)+\epsilon.
\]
This is possible as $h_{top}(T)=\lim_{\delta\to 0^+}h_{top}(T,\delta)$, also recall that we defined $h_{top}(T,\delta) $  in \Cref{definition-h-T-epsilon}. By definition of limsup, it follows that there is $n_0$ such that for all $n\geq n_0$ we have 
\[ 
 \frac{1}{n}\log \spa(T,\{0,\dots,n-1\},\delta)<h_{top}(T)+\epsilon.\]
\Cref{monotony-properties-of-spa} shows that then for every interval $I\Subset \Z$ with $|I|\geq n_0$ we also have 
\begin{dmath}\frac{1}{|I|}\log \spa(T,I,\delta)<h_{top}(T)+\epsilon.\end{dmath}
Now let $F\Subset \Z$ with $m$-bounded gaps and with $|F|\geq n_0$, and let $I=F+\{0,\dots,m-1\}$. The set $I$ is an interval because $F$ has $m$-bounded gaps. Moreover the  inequality above is valid for $I$ as $|I|\geq n_0$. Since  $\spa(T,F,\delta)\leq\spa(T,I,\delta)$ (\Cref{monotony-properties-of-spa}) and $1/|F|=C_m(F)/|I|$, we have
\(
\frac{1}{|F|}\log \spa(T,F,\delta)\leq  \frac{1}{|F|}\log \spa(T,I,\delta) =  \frac{C_m(F)}{|I|}\log \spa(T,I,\delta)\leq C_m(F)\cdot(h_{top}(T)+\epsilon).
\)
This shows that $\delta$ verifies part (1) in 
the statement. 
We now prove part (2). By \Cref{downarowicz} we can choose a finite open cover $\mathcal V$ of $X$ such that
\[\inf_{F\Subset \Z}\frac{\log N(\mathcal V ^F)}{|F|}> h_{top}(T)-\epsilon.\]
Now consider the open cover $\U=\mathcal V^{\{0,\dots,m-1\}}$. For any $F\Subset \Z$ we have $\U^F=\mathcal V^{F+\{0,\dots,m-1\}}$. Using the fact $1/|F|=C_m(F)/|F+\{0,\dots,m-1\}|$ and the property of $\mathcal V$ we have
\begin{dmath}\frac{\log N(\U^F)}{|F|}=\frac{\log N(\mathcal V^{\{0,\dots,m-1\}+F})}{|F|}=C_m(F)\frac{\log N(\mathcal V^{\{0,\dots,m-1\}+F})}{|F+\{0,\dots,m-1\}|} \geq C_m(F)\cdot(h_{top}(T)-\epsilon)\end{dmath}
Thus $\U$ has the property $\frac{1}{|F|}\log N(\U^F)\geq C_m(F)\cdot (h_{top}(T)-\epsilon)$, $F\Subset \Z$. Now let $\delta_2>0$ be smaller than one half the Lebesgue number for $\U$, and let $\delta<\delta_2$. By \Cref{wellknown} and \Cref{sep-and-spa} we have  $\spa(T,F,\delta)\geq \sep(T,F,2\delta)\geq N(\U^F)$, and therefore 
\(\label{cool-inequality-2}
\frac{\log \spa(T,F,\delta)}{|F|}\geq  \frac{\log N(\U^F)}{|F|}\geq  C_m(F)\cdot (h_{top}(T)-\epsilon), \ \ F\Subset\Z.
\)
Thus $\delta$ verifies part (2) in the claim. The conclusion in the statement holds for every $\delta<\min\{\delta_1,\delta_2\}$. 

We now consider the case $h_{top}(T)=\infty$. \Cref{downarowicz} shows that given $N\in\N$ we can choose a finite open cover $\mathcal V$ such that $\log N(\mathcal V^F)\geq |F|\cdot N$ for every $F$. As before, we let $\delta$ be smaller than the Lebesgue number of the open cover $\mathcal V^{\{0,\dots,m-1\}}$ multiplied by $1/2$. The same computations from before show that the inequality in the statement is verified by $\delta$. 
\end{proof}

\begin{remark} Intuitively speaking, 
\Cref{estimate} says that for a finite entropy system,  within the class of sets  $F\Subset\Z$ with $m$-bounded gaps  we can estimate \[\spa(T,F,\epsilon)\approx e^{C_m(F)\cdot |F|\cdot h}.\]
For instance if we take $F_n=\{1,\dots,n\}$ then $C_m(F_n)$ converges to $1$ and we recover the familiar estimate $\spa(T,F_n,\epsilon)\approx e^{|F_n|h}= e^{nh}$. For $F_n=\{2,4,6,8\dots,2n\}$ we have that $C_m(F_n)$ converges to $2$ and therefore  $\spa(T,F_n,\epsilon)\approx e^{2|F_n|h}=e^{2nh}$. Observe that $C_m(F)$ behaves as a finitary version of the constant $K(A)$ from \Cref{Topological-kruw-newton} for a sequence $A=(t_i)_{i\geq 1}$. In fact, provided that $A$ has no repetitions, $K(A)$ is the limit of $C_m(\cdot)$ evaluated over finite segments of the sequence, in the sense that \[K(A)=\lim_{m\to\infty}\limsup_{n\to\infty}C_m(\{t_1,\dots,t_n\}).\]
\end{remark}

We finish this section stating an interesting corollary of \Cref{estimate}. This will not be used in the rest of this work and is related to the questions considered  in \cite{kolyada_shearers_2016}. It is well known that topological entropy can be computed as $h_{top}(T)=\sup_{\U}\limsup_{n\to\infty}\frac{1}{|F_n|}\log N(\U^{F_n})$  for every choice of a F\o lner sequence $(F_n)_{n\in\N}$,  see \cite{kolyada_shearers_2016}. A nontrivial consequence of \Cref{downarowicz} 
is that for every choice of  $(F_n)_{n\in\N}$, even if it is not a F\o lner sequence, we still have the inequality $h_{top}(T)\leq \sup_{\U}\limsup_{n\to\infty}\frac{1}{|F_n|}\log N(\U^{F_n})$. The next result  complements these facts by showing that the inequality is strict for every choice of a non-F\o lner sequence. 
\begin{corollary}[Of \Cref{estimate}]
Suppose that $0<h_{top}(T)<\infty$ and let $(F_n)_{n\in\N}$ be an arbitrary sequence of finite subsets of $\Z$ which is not a F\o lner sequence. Then 
\begin{equation}\label{limsup-nonfolner}h_{top}(T)<\sup_{\U}\limsup_{n\to \infty}\frac{1}{|F_n|}\log N(\U^{F_n}).\end{equation}
\end{corollary}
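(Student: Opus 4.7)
The plan is to combine the infimum rule (\Cref{downarowicz}) with a standard characterization of Følner sequences in $\Z$: a sequence $(F_n)_{n\in\N}$ of finite subsets of $\Z$ is Følner if and only if $\lim_n C_m(F_n)=1$ for every $m\in\N$, with $C_m$ as in \Cref{constant-C_m}. The ``only if'' direction follows from the definition and the bound $|F_n+\{0,\dots,m-1\}|-|F_n|\leq \sum_{k=0}^{m-1}|(F_n+k)\setminus F_n|$; the converse uses translation invariance of cardinality in $\Z$ to control $|(F_n+k)\triangle F_n|$ by $(C_{|k|+1}(F_n)-1)|F_n|$. Since $C_m(F)\geq 1$ always, the condition is equivalent to $\limsup_n C_m(F_n)=1$ for every $m$. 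Therefore, the hypothesis that $(F_n)$ is not Følner produces some $m\in\N$ and some $\lambda>1$ with $\limsup_n C_m(F_n)\geq \lambda$; I would fix such $m$ and $\lambda$.

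Next, for each $\epsilon\in(0,h_{top}(T))$, I would recycle the construction from the proof of part (2) of \Cref{estimate}: apply \Cref{downarowicz} to find a finite open cover $\mathcal V$ of $X$ with $\inf_{F\Subset\Z}|F|^{-1}\log N(\mathcal V^F) > h_{top}(T)-\epsilon$, and put $\U_\epsilon := \mathcal V^{\{0,\dots,m-1\}}$. Since $\U_\epsilon^F=\mathcal V^{F+\{0,\dots,m-1\}}$ and $|F+\{0,\dots,m-1\}|=C_m(F)|F|$, this yields
\[\frac{\log N(\U_\epsilon^F)}{|F|}\geq C_m(F)\bigl(h_{top}(T)-\epsilon\bigr)\quad\text{for every } F\Subset\Z.\]
Specializing to $F=F_n$ and taking the limit superior in $n$,
\[\limsup_n \frac{\log N(\U_\epsilon^{F_n})}{|F_n|}\geq \bigl(h_{top}(T)-\epsilon\bigr)\limsup_n C_m(F_n)\geq \lambda\bigl(h_{top}(T)-\epsilon\bigr).\]

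Since $\U_\epsilon$ is a particular open cover, the right-hand side of \Cref{limsup-nonfolner} is at least $\lambda(h_{top}(T)-\epsilon)$ for every $\epsilon\in(0,h_{top}(T))$; letting $\epsilon\to 0^+$ gives a lower bound of $\lambda\, h_{top}(T)$, which is strictly greater than $h_{top}(T)$ precisely because $\lambda>1$ and $h_{top}(T)>0$. I expect no serious obstacle beyond clean bookkeeping: the only step requiring care is the first one, converting ``$(F_n)$ not Følner'' into the inequality $\limsup_n C_m(F_n)\geq \lambda>1$ for a fixed $m$. Once this $m$ has been extracted, the conclusion is a one-shot application of the infimum rule with the cover $\mathcal V^{\{0,\dots,m-1\}}$, which is the same device used in the proof of \Cref{estimate}(2) to absorb the factor $C_m$.
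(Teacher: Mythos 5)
Your proposal is correct and follows essentially the same route as the paper: both convert ``not F\o lner'' into $\limsup_n C_m(F_n)>1$ for some fixed $m$ via the identity $C_m(F)=1+|(F+\{0,\dots,m-1\})\Delta F|/|F|$, and then apply the infimum rule through the cover $\mathcal V^{\{0,\dots,m-1\}}$ to get the lower bound $C_m(F_n)(h_{top}(T)-\epsilon)$. The only cosmetic difference is that you re-derive the open-cover inequality directly from \Cref{downarowicz}, whereas the paper invokes the statement of \Cref{estimate}(2) for $\spa(T,F_n,\delta)$ and then passes back to covers via \Cref{wellknown}; both are valid.
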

\begin{proof}
Recall that $(F_n)_{n\in\N}$, $F_n\Subset \Z$, is a F\o lner sequence if for every $E\Subset\Z$ the quotient $|(F_n+E)\Delta F_n|/|F_n|$ converges to $0$ as $n\to\infty$. Here $\Delta$ denotes symmetric difference. Furthermore, it suffices to consider sets $E$ of the form $\{0,\dots,m-1\}$. See  \cite[Proposition 4.6.1]{ceccherini-silberstein_cellular_2010} for more details. Since $|F_n+\{0,\dots,m-1\}|=|F_n|+|(F_n+\{0,\dots,m-1\})\Delta F_n|$, it follows that  we can write
\[C_m(F_n)=1+\frac{|(F_n+\{0,\dots,m-1\})\Delta F_n|}{|F_n|}.\]
This shows that the sequence is F\o lner if and only if $\lim_{n\to\infty}C_m(F_n)=1$ for every $m\in\N$. 

Now let $(F_n)_{n\in\N}$ be a sequence of finite subsets of $\Z$ which is not a F\o lner sequence. Note that by definition $C_m(F_n)$ is always at least $1$. By the observation in the previous paragraph, we have that for $m\in\N$ is large enough $\limsup_{n\to\infty}C_m(F_n)$ is strictly larger than 1. We assume in what follows that $m$ has this property and let $L=\limsup_{n\to\infty}C_m(F_n)$.  

Let $\epsilon>0$ be arbitrary, and choose $\delta\in(0,\epsilon)$ as in \Cref{estimate}. Thus for every $n\in\N$ we have \[\frac{1}{|F_n|}\log \spa(T,F_n,\delta)\geq 
 C_m(F_n)(h_{top}(T) -\epsilon)\]
 Now let $\U$ be a finite open cover of $X$ whose elements have diameter smaller than $\delta$. Since $N(\U^{F_n})\geq \spa(T,F_n,\delta)$ for all $n$ (\Cref{wellknown}), it follows that 
 \( \limsup_{n\to\infty}\frac{1}{|F_n|}\log N(\U^{F_n}) \geq \limsup_n \ C_m(F_n)(h_{top}(T) -\epsilon) = L\cdot (h_{top}(T)-\epsilon)
 \)
 Thus the supremum in the statement is at least $L\cdot (h_{top}(T)-\epsilon)$. Since this is true for arbitrary $\epsilon$, it follows that it is at least  $L\cdot h_{top}(T)$.  
\end{proof}
Note that \Cref{limsup-nonfolner} may not hold
if we replace $\limsup$ by $\liminf$ or $\inf$. This occurs exactly when a subsequence of $(F_n)$ is F\o lner. In fact, it is possible to estimate the gap between the limsup and liminf by considering  the limsup and liminf of $C_m(F_n)$ as $n\to\infty$, and following the same idea in the previous proof. 

\subsection{Sketch of the proof of \Cref{thm:entropy}}
Here we present an informal outline of the proof of \Cref{thm:entropy}. 

For simplicity we first describe the proof in a special case first.  Let $Y\subset \{-1,1\}^\Z$ be a subshift on symbols $\{-1,1\}$, let $S$ be the shift map on $Y$, and let $\tau\colon Y\to\Z$ be the continuous map defined by $\tau(y)=y(0)$, $y\in Y$. We also fix an arbitrary invertible topological system $(X,T)$, and  consider the map $S\rtimes_\tau T$ over $ Y\times X$ defined by $(y,x)\mapsto (S(y), T^{\tau(y)}(x))$.

\Cref{thm:entropy} states that there is a scale $\{a_n(t)\}_{n\in\N,t>0}$  depending only on $(Y,S,\tau)$, and with the property $\upperent_{\mathbf{a}}(S\rtimes_\tau T) =h_{top}(T)$. We will prove that the following scale has the desired property:
\begin{equation}\label{first-scale}a_n(t)=\sum_{w\in L_n(Y)} e^{t\cdot |\tau^{\{0,\dots,n-1\}}(w)|}, \ \ \  n\in\N, t>0.\end{equation}
Here $L_n(Y)$ is the set of words of length $n$ that appear in $Y$. For each $w\in L_n(Y)$ we define $\tau^{\{0,\dots,n-1\}}(w)$ as the set $\{\tau^i(w) : i=0,\dots,n-1\}\Subset\Z$, where $\tau^i(w)=w_0+w_1+\dots+w_i$ for $i\geq 1$, and $\tau^0(w)=0$. If we interpret $w$ as a walk on $\Z$ of $n$ steps, where each $-1$ or $1$ corresponds to ``left'' and ``right'', then $|\tau^{\{0,\dots,n-1\}}(w)|$ is the number of visited places in $\Z$.

Recall that topological slow entropy is defined in  \Cref{sec:preliminaries}. In order to compute the topological slow entropy of $S\rtimes_\tau T$ with scale $a_n(t)$ we must estimate $\spa(S\rtimes_\tau T,\{0,\dots,n-1\},\epsilon)$ and then compare this with $a_n(t)$. In \Cref{prop:the-set-A} we will show an estimate of the form 
\begin{equation}\label{informal-estimate-for-spa}
    \spa(S\rtimes_\tau T,\{0,\dots,n-1\},\epsilon)\approx\sum_{w\in L_n(Y)}\spa(T,\tau^{\{0,\dots,n-1\}}(w),\epsilon).
\end{equation}
An important consequence of the fact that $\tau$ attains only values in $\{-1,1\}$ is that the set $\tau^{\{0,\dots,n-1\}}(w)$ is an interval for all $n\in\N$, $w\in L_n(Y)$. The definition of topological entropy shows that $\spa(T,I,\epsilon)\approx e^{h_{top}(T)|I|}$ for any interval $I\Subset\Z$. Thus we can estimate
\[\spa(S\rtimes_\tau T,\{0,\dots,n-1\},\epsilon)\approx\sum_{w\in L_n(Y)}e^{h_{top}(T)|\tau^{\{0,\dots,n-1\}}(w)|}.\]
Therefore for $t>0$ we can write
\begin{equation}\label{quotient-to-analyze}
\frac{\spa(S\rtimes T,\{0,\dots,n-1\},\epsilon)}{a_n(t)}\approx \frac{\sum_{w\in L_n(Y)}e^{h_{top}(T)|\tau^{\{0,\dots,n-1\}}(w)|}}{\sum_{w\in L_n(Y)} e^{t|\tau^{\{0,\dots,n-1\}}(w)|}}
\end{equation}
The assumption that we impose on $\tau$ essentially means that $|\tau^{\{0,\dots,n-1\}}(w)|$ is unbounded on a positive proportion of $w\in L_{n}(Y)$, as $n$ grows. This allows us to prove that the limsup of the quotient above is $0$ if $t>h_{top}(T)$, and that the liminf of this quotient is positive for $t<h_{top}(T)$. Thus the upper topological slow entropy of $S\rtimes_{\tau}T$ with scale $a_n(t)$ is at most $h_{top}(T)$, and the lower topological slow entropy of $S\rtimes_{\tau}T$ with scale $a_n(t)$ is at least $h_{top}(T)$. This implies that both quantities are equal to $h_{top}(T)$, which is exactly what we wanted.

Following these same lines we will prove \Cref{thm:entropy} in the general case. The biggest challenge is that $\tau$ may attain values outside $\{-1,0,1\}$. The sets $\tau^{\{0,\dots,n-1\}}(w)$ may not be intervals, and the sparseness of these sets may produce extra growth which is quantified by the constant $C_m$ from \Cref{sec:estimates} (for suitable $m$). This makes it necessary to modify the scale from \Cref{first-scale}, and the scale having the desired property in the general case is
\[a_n(t)=\sum_{w\in L_n(Y)} e^{t\cdot |\tau^{\{0,\dots,n-1\}}(w)|\cdot C_m(\tau^{\{0,\dots,n-1\}}(w))}, \ \ \  n\in\N, t>0.\]

\subsection{The tuple $(Y,S,\tau)$ and the property of being $\lambda$-unbounded}\label{subsec:the-tuple-S}
Here we fix the tuple to   $(Y,S,\tau)$ with which we will work in the proof of \Cref{thm:entropy} and \Cref{thm:slow-entropy}. We also define the property of being $\lambda$-unbounded that we impose on $\tau$. We will assume that the subshift in the base is a $\Z$-subshift, but the proof can be easily adapted to the case of an $\N$-subshift, see \Cref{non-invertible}.  
 
Let $Y\subset \A^\Z$ be a subshift on alphabet $\A$, $|\A
\geq 2$, and let $S$ be the shift map on $Y$. 
We endow $Y$ with the standard metric for subshifts, so the distance between two configurations is $2^{-n}$ when they coincide over $\{-n,\dots,n\}$ and they do not over $\{-n-1,\dots,n+1\}$.

Let $\tau\colon Y \to \Z$ be a continuous function.  We now observe some consequences of this assumption. Since $Y$ is compact and $\tau$ is continuous, it follows that $\tau (Y)$ is a compact subset of $\Z$ and therefore finite. Thus $\tau$ attains finitely many values and we can  fix $m\in\N$ with the property 
\begin{equation}\label{constant-m}|\tau(y)|\leq m \ \text{ for all } y\in Y.\end{equation}
The results from \Cref{sec:estimates} will be used for this  fixed $m$ (the $m$ in \Cref{constant-C_m}). 

We can write a disjoint union  $Y=\bigsqcup _{i=-m}^{m}\tau^{-1}\{i\}$. Since $\tau$ is continuous and $\Z$ is given the discrete topology, each $\tau^{-1}\{i\}$ is both open and closed as a subset of $Y$. One easily sees that any subset of $Y$ which is both open and closed in a subshift is a finite union of cylinders\footnote{If  $C\subset Y$ is open then it equals a union of balls $C=\bigcup_{i\in I} U_i$. In this metric a ball is a cylinder, so each $U_i$ is a cylinder. Now suppose that $C$ is also closed. Since $Y$ is compact we have that $C$ is compact, $\{U_i\}_{i\in I}$ is an open cover of $C$, and taking a finite subcover $U_1,\dots, U_n$ we obtain $C=\bigcup _{i=1}^n U_i$.}. Thus we can find  $s\in\N$ with the property
\begin{equation}\label{constant-s}
    d(y,y')\leq 2^{-s}\Rightarrow \tau(y)=\tau(y'), \ \text{for all }y,y'\in Y.
\end{equation}
In other words for all $y\in Y$ the value  $\tau(y)$ is determined by the values of $y$ over $\{-s,\dots,s\}$.

Given $y\in Y$ we define $\tau^0(y)=0$,  and for $n\geq 1$ 
\[\tau^n(y) = \sum_{i=0}^{n-1}\tau(S^i(y)). \]
Furthermore, if $F\Subset \Z$ then we define $\tau^F(y)$ as the set
\[\tau^F(y)=\{\tau^i(y) : i\in F\}\Subset\Z.\]   
In some cases it makes sense to evaluate $\tau$ on a pattern or a word instead of a configuration, as $\tau(y)$ depends on finitely many values of $y$. Recall that $L_{n,s}(Y)$ denotes $\{y|_{\{-s,\dots,s+n-1\}} : y\in Y\}$. Given $w\in L_{n,s}(Y)$ we can write  $\tau^i(w)$ for every $i\in \{0,\dots,n-1\}$, and then also $\tau^{\{0,\dots,n-1\}}(w)$. 

We remark that despite the set $\tau^{\{0,\dots,n-1\}}(w)$ may not be an interval, it must have $m$-bounded gaps (by \Cref{constant-m}). Therefore we can apply \Cref{estimate} to sets of the form $\tau^{\{0,\dots,n-1\}}(w)$.

We are now ready to define the assumption on $\lambda$ that we will need.
\begin{definition}\label{def:good}\label{lambda-unbounded}
    Given $w\in L_{n,s}(Y)$ we define $r_n(w)$ by 
    \[r_n(w)=|\tau^{\{0,\dots,n-1\}}(w)|.\]
    Given $\lambda>0$, we say that $\tau$ is $\lambda$-unbounded if for
    all $N\in\N$ we have
\[\liminf_{n\in\N}\frac{|\{w\in L_{n,s}(Y) : r_n(w)\geq N\}|}{|L_{n,s}(Y)|}\geq \lambda.\]
\end{definition}
We assume in what follows that $\tau$ has this property for some $\lambda>0$. One easily verifies that this property does not depend on the choice of $s$ from \Cref{constant-s}. In \Cref{sec:examples} we will observe  sufficient conditions that guarantee that $\tau$ is $\lambda$-unbounded. 

Finally, observe that if $\tau$ is $\lambda$-unbounded then it is also $\lambda'$-unbounded for any $0<\lambda'<\lambda$. It will be convenient for us to fix $\ell\in\N$ such that $\tau$ is $1/\ell$-unbounded. 

\subsection{The scales}
Here we define the scales that will be used to prove \Cref{thm:entropy} and \Cref{thm:slow-entropy}. 
\begin{definition}\label{def:scales}
Let $w\in L_{n,s}(Y)$, and recall from \Cref{def:good} that
\[r_n(w)= |\tau^{\{0,\dots,n-1\}}(w)|.\]
We define $q_n(w)$ by 
\[q_n(w)=r_n(w)\cdot C_m(\tau^{\{0,\dots,n-1\}}(w))\]
Here $C_m$ is the constant defined in \Cref{constant-C_m}, and $m$ was chosen in \Cref{constant-m}. We define the scale $a_n(t)$ by
\[a_n(t)=\sum_{w\in L_{n,s}(Y)}e^{t\cdot q_n(w)}.\]
Furthermore, given an arbitrary scale $b_n(t)$, we define the scale $c_n(t)$ by 
\[c_n(t)=\sum_{w\in L_{n,s}(Y)}b_{r_n(w)}(t).\]
\end{definition}
We will prove that $a_n(t)$ verifies the conclusion in \Cref{thm:entropy}. Assuming that $\tau$ attains only values in $\{-1,0,1\}$, we will show  that $c_n(t)$ verifies the conclusion in \Cref{thm:slow-entropy}.
\subsection{The skew product $S\rtimes_\tau T$}
Let $X$ be a compact metric space and let $T\colon X\to X$ be a homeomorphism. Here we define the skew product $S\rtimes_\tau T$ and prove some elementary results. 

We endow the product space $Y\times X$ with the metric $d^{Y\times X}((y,x), (y',x'))=\max\{d^Y(y,y'),d^X(x,x')\}$. Here $d^X$ denotes the metric that comes with $X$, and $d^Y$ denotes the standard metric for subshifts (\Cref{sec:preliminaries}). We will omit the superscripts as the spaces ($X$, $Y$, or $Y\times X$) will be clear from the context. Consider the continuous transformation $S\rtimes_\tau T$ over $Y\times X$ defined by 
\[(y,x)\mapsto (S(y),T^{\tau(y)}(x)).\]
The $n$-th iterate of this map is given by 
\[(y,x)\mapsto(S\rtimes_\tau T)^n(y,x)=(S^n(y),T^{\tau^n(y)}(x)).\]
Let $(y,x)$ and $(y',x')$ be a pair of elements in $Y\times X$ such that they are $\epsilon,\{0,\dots,n-1\}$-close in the Bowen metric for $S\rtimes_\tau T$, for some $\epsilon\leq 2^{-s}$. This implies that $y$ and $y'$ are $\epsilon,\{0,\dots,n-1\}$-close in the Bowen metric for $S$. Since we took $\epsilon$ sufficiently small, this implies that they coincide over $\{-s,\dots,n-1+s\}$, and therefore $\tau^i(y)=\tau^i(y')$ for every $i\in\{0,\dots,n-1\}$. Thus $d^B_{\{0,\dots,n-1\}}((y,x),(y',x'))$ is equal to
\[\max(\{d(S^i(y),S^i(y')) : i\in \{0,\dots,n\}\}\cup \{d(T^i(x),T^i(x')) : i \in \tau^{\{0,\dots,n-1\}}(y)\}).\]
The relevant conclusion is that whenever $d^B_{\{0,\dots,n-1\}}((y,x),(y',x'))$ is smaller than $2^{-s}$, then the Bowen distance for $S\rtimes_\tau T$ can be expressed in terms of the Bowen distances for $S$ and $T$ as
\begin{equation}\label{lemma-bowen-metrics} d^B_{\{0,\dots,n-1\}}((y,x),(y',x'))=\max\{d^B_{\{0,\dots,n-1\}}(y,y'),d^B_{\tau^{\{0,\dots,n-1\}}(y)}(x,x')\}\end{equation}
We will now estimate  $\spa(S\rtimes_\tau T,\{0,\dots,n-1\},\epsilon)$. It will be convenient to introduce the following. Recall that $L_{n,s}(Y)$ is the set $\{y|_{\{-s,\dots,n+s\}}:y\in Y\}$.
\begin{definition}
    For all $\epsilon\in (0,2^{-s})$ and $n\geq 1$,  we define $A_n(\epsilon)$  by
    \[A_n(\epsilon)=\sum_{w\in L_{n,s}(Y)}\spa(T,\tau^{\{0,\dots,n-1\}}(w),\epsilon).\]
\end{definition}
The next result shows that in the computation of the topological slow entropy of $S\rtimes_\tau T$, we can replace $\spa(S\rtimes_\tau T,\{0,\dots,n-1\},\epsilon)$ by $A_n(\epsilon)$, which is easier to analyze. 
\begin{proposition}\label{prop:the-set-A}
    Let $\epsilon\in (0,2^{-s-1})$ and $n\geq 1$. Then we have the inequalities
    \[A_n(2\epsilon)\leq \spa(S\rtimes_\tau T,\{0,\dots,n-1\},\epsilon)\leq E_{\epsilon}\cdot  A_n(\epsilon/2),\]
    where $E_{\epsilon}$ is a constant depending on $\epsilon$ and not on $n$.
\end{proposition}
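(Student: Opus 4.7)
The plan is to reduce the Bowen metric on $Y \times X$ to a product of two Bowen metrics via the decomposition \eqref{lemma-bowen-metrics}, which applies whenever the overall Bowen distance is at most $2^{-s}$; this hypothesis holds throughout because $\epsilon < 2^{-s-1}$. The key structural point is that $\tau^i(y)$ for $i \in \{0,\dots,n-1\}$ depends only on the restriction $w := y|_{\{-s,\dots,n-1+s\}} \in L_{n,s}(Y)$, which is exactly the indexing set of the sum defining $A_n(\epsilon)$.

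For the lower bound, I would pick a representative $y_w$ for each $w \in L_{n,s}(Y)$ and apply a minimum $\epsilon$-spanning set $\mathfrak U$ of $S \rtimes_\tau T$ to each fibre $\{y_w\} \times X$. Any $(y',x') \in \mathfrak U$ that is Bowen-$\epsilon$-close to some $(y_w,x)$ must satisfy $y'|_{\{-s,\dots,n-1+s\}} = w$ (by the smallness of $\epsilon$), and by \eqref{lemma-bowen-metrics} the corresponding $x'$ lies within $\epsilon$ of $x$ in $d^B_{\tau^{\{0,\dots,n-1\}}(w)}$. Thus the points of $\mathfrak U$ with base type $w$ project to an $\epsilon$-spanning set for $T$ at $\tau^{\{0,\dots,n-1\}}(w)$, and summing the disjoint contributions across $w$ gives $|\mathfrak U| \geq \sum_w \spa(T, \tau^{\{0,\dots,n-1\}}(w), \epsilon) \geq A_n(2\epsilon)$ by monotonicity of $\spa(T,F,\cdot)$.

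For the upper bound, I would refine to a window $\{-k,\dots,n-1+k\}$ with $2^{-k} \leq \epsilon$ (so $k \geq s+1$). For each $v \in L_{n,k}(Y)$ I fix a representative $y_v$ with restriction $w(v) \in L_{n,s}(Y)$; for each $w \in L_{n,s}(Y)$ I take a minimum $(\epsilon/2)$-spanning set $F_w$ for $T$ at $\tau^{\{0,\dots,n-1\}}(w)$, and assemble the candidate spanning set $E := \bigcup_{v} \{y_v\} \times F_{w(v)}$. Given $(y,x) \in Y \times X$, setting $v = y|_{\{-k,\dots,n-1+k\}}$ makes the $Y$-iterates of $y$ and $y_v$ stay within $2^{-k} \leq \epsilon$, while $\tau^i(y) = \tau^i(y_v)$ for $i \in \{0,\dots,n-1\}$ allows the $X$-iterates to be matched to within $\epsilon/2$ by a suitable point of $F_{w(v)}$; this yields overall Bowen distance at most $\epsilon$. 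Since at most $|\A|^{2(k-s)}$ words $v \in L_{n,k}(Y)$ extend a given $w \in L_{n,s}(Y)$, the cardinality is bounded by $|\A|^{2(k-s)} \cdot A_n(\epsilon/2)$, and $E_\epsilon := |\A|^{2(k-s)}$ is the required $\epsilon$-dependent, $n$-independent constant.

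The argument is essentially bookkeeping once the decomposition \eqref{lemma-bowen-metrics} is available. The only subtlety is that $\tau$ is governed by the coarser $s$-window while the $Y$-Bowen metric at scale $\epsilon$ is governed by the finer $k$-window, and this mismatch produces the multiplicative factor $|\A|^{2(k-s)}$; it is harmless for slow-entropy computations because it is independent of $n$.
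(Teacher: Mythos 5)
Your proof is correct and follows essentially the same route as the paper: both arguments rest on the decomposition \eqref{lemma-bowen-metrics} and on counting by words in $L_{n,s}(Y)$, with the multiplicity of the finer $\epsilon$-window over the $s$-window absorbed into the $n$-independent constant $E_\epsilon$. The only cosmetic difference is that the paper builds a single set $B(n,\epsilon)$ from maximal separated sets and uses the $\sep$--$\spa$ sandwich for both inequalities, whereas you construct spanning sets directly; your lower-bound argument in fact yields the slightly stronger inequality $A_n(\epsilon)\leq \spa(S\rtimes_\tau T,\{0,\dots,n-1\},\epsilon)$.
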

\begin{proof}
For each $\epsilon\in(0,2^{-s})$ and $F\Subset \Z$ we let $C(T,F,\epsilon)$ be a choice of a maximal $\epsilon,F$-separated set for $T$. Similarly, we denote by $C(S,F,\epsilon)$ a choice of a maximal $\epsilon,F$-separated set for $S$. It follows that these sets are also $\epsilon,F$-spanning. For all $\epsilon\in(0,2^{-s})$ and $n\geq 1$, we define 
\[B(n,\epsilon)=\{(y,x) : y\in C(S,\{0,\dots,n-1\},\epsilon), \ x\in C(T,\tau^{\{0,\dots,n-1\}}(y),\epsilon)\}.\]
We claim that $B(n,\epsilon)$ is $\epsilon,\{0,\dots,n-1\}$-spanning and $\epsilon,\{0,\dots,n-1\}$-separated for $S\rtimes_\tau T$. It is straightforward that $B(n,\epsilon)$ is $\epsilon,\{0,\dots,n-1\}$-spanning. To see that it is $\epsilon,\{0,\dots,n-1\}$-separated, let $(y,x)$ and $(y',x')$ be elements in $B(n,\epsilon)$ that are $\epsilon,\{0,\dots,n-1\}$-close.  \Cref{lemma-bowen-metrics} shows that then $x$ and $x'$ are $\epsilon, \tau^{\{0,\dots,n-1\}}(y)$-close. Since $x$ and $x'$ belong to the $\tau^{\{0,\dots,n-1\}}(y),\epsilon$-separated set $C(T,\tau^{\{0,\dots,n-1\}}(y),\epsilon)$, this shows that $x=x'$. The same argument applies to $y$ and $y'$ and therefore $(y,x)=(y',x')$.

We can now prove the left inequality in the statement. Observe that for every $w\in L_{n,s}(Y)$ it is possible to find $y\in C(S,\{0,\dots,n-1\},\epsilon)$ such that $w\sqsubset y$  and in this case $\tau^{\{0,\dots,n-1\}}(w)=\tau^{\{0,\dots,n-1\}}(y)$. Thus we have
\(
A_n(\epsilon)=\sum_{w\in L_{n,s}(Y)}\spa(T,\tau^{\{0,\dots,n-1\}}(w),\epsilon) \leq 
\sum_{y\in C(S,\{0,\dots,n-1\},\epsilon)} \spa(T,\tau^{\{0,\dots,n-1\}}(y),\epsilon)
\)
Since $C(T,\tau^{\{0,\dots,n-1\}}(y),\epsilon)$ is in particular $\epsilon,\{0,\dots,n-1\}$-spanning we have that $\spa(T,\tau^{\{0,\dots,n-1\}}(y),\epsilon)\leq \lvert C(T,\tau^{\{0,\dots,n-1\}}(y),\epsilon)\rvert$ for each $y$, and the inequality above becomes 
\(  
\phantom{a} \leq 
\sum_{y\in C(S,\{0,\dots,n-1\},\epsilon)} \lvert C(T,\tau^{\{0,\dots,n-1\}}(y),\epsilon)\rvert
= \lvert B(n,\epsilon)\rvert 
\)
Since  $B(n,\epsilon)$ is in particular $\epsilon,\{0,\dots,n-1\}$-separated, its cardinality can be bounded as
\(\phantom{a}\leq \sep 
 (S\rtimes_\tau T,\{0,\dots,n-1\},\epsilon) \leq\spa(S\rtimes_\tau T,\{0,\dots,n-1\},\epsilon/2).\)
Here we used \Cref{sep-and-spa}. Thus $A_n(\epsilon)\leq \spa(S\rtimes_\tau T,\{0,\dots,n-1\},\epsilon/2)$ and this proves the left inequality in the statement.

We now prove the remaining inequality in the statement $\spa(S\rtimes_\tau T,\{0,\dots,n-1\},\epsilon)\leq E_{\epsilon}A_n(\epsilon/2)$. Since $B(n,\epsilon)$ is $\epsilon,\{0,\dots,n-1\}$-spanning, we have  $\spa(S\rtimes_\tau T,\{0,\dots,n-1\},\epsilon)\leq |B(n,\epsilon)|$. Thus it suffices to prove 
\begin{equation}\label{equation-c-epsilon}|B(n,\epsilon)|\leq E_{\epsilon}A_n(\epsilon/2),\end{equation} for a constant $E_{\epsilon}$ independent of $n$. Using \Cref{sep-and-spa} and the fact that $C(T,\tau^{\{0,\dots,n-1\}}(y),\epsilon)$ is in particular $
\epsilon,\{0,\dots,n-1\}$-separated we have
\begin{dmath}\label{eq:long sum}
\vert B(n,\epsilon)\rvert = \sum_{y\in C(S,\{0,\dots,n-1\},\epsilon)}\lvert C(T,\tau^{\{0,\dots,n-1\}}(y),\epsilon)\rvert \leq{\sum_{y\in C(S,\{0,\dots,n-1\},\epsilon)}\sep(T,\tau^{\{0,\dots,n-1\}}(y),\epsilon)}\leq\sum_{y\in C(S,\{0,\dots,n-1\},\epsilon)}\spa(T,\tau^{\{0,\dots,n-1\}}(y),\epsilon/2)
\end{dmath}
Let $l(\epsilon)=\lceil\log_2(\epsilon)\rceil$, so that $d(y,y')\leq\epsilon$ exactly when $y,y'$ coincide when restricted to $\{-l(\epsilon),\dots,l(\epsilon)\}$. It follows that $d^B_{\{0,\dots,n-1\}}(y,y')\leq\epsilon$ exactly when $y,y'$ coincide over  $\{-l(\epsilon),\dots,n+l(\epsilon)-1\}$. Since $C(S,\{0,\dots,n-1\},\epsilon)$ is chosen to be $\epsilon,\{0,\dots,n-1\}$-separated, every element in this set is uniquely identified by its restriction to $\{-l(\epsilon),\dots,n+l(\epsilon)-1\}$. Thus for each $w\in L_{\{-s,\dots,n+s-1\}}(Y)$ there are at most $E_\epsilon =|\A|^{n+2l(\epsilon)-2s}$ elements $y\in C(S,\{0,\dots,n-1\},\epsilon)$ with $y|_{\{-s,\dots,s+n-1\}}=w$. If $w$ and $y$ satisfy this relation then we have $\tau^{\{0,\dots,n-1\}}(y)=\tau^{\{0,\dots,n-1\}}(w)$ and therefore $\spa(T,\tau^{\{0,\dots,n-1\}}(y),\epsilon/2)=\spa(T,\tau^{\{0,\dots,n-1\}}(w),\epsilon/2)$. Thus \Cref{eq:long sum} can be continued as
\(
\phantom{a}\leq E_{\epsilon}\cdot \sum_{w\in L_{n,s}(Y)}\spa(T,\tau^{\{0,\dots,n-1\}}(w),\epsilon/2)
=
E_{\epsilon}A_n(\epsilon/2).
\)
This finishes the proof. 
\end{proof} 
\subsection{The slow entropy of $S\rtimes_\tau T$}
In this section we compute the slow entropy of $S\rtimes_\tau T$ with the scales from \Cref{def:scales}. The next result proves \Cref{thm:entropy} as the system $(X,T)$ is arbitrary.
\begin{proposition}[Proof of \Cref{thm:entropy}]\label{prop:entropy}
The upper and lower slow entropy of $S\rtimes_\tau T$ with respect to $a_n(t)$ is equal to $h_{top}(T)$. That is,
\[\lowerent_{\textbf{a}}(S\rtimes_\tau T)=\upperent_{\textbf{a}}(S\rtimes_\tau T)=h_{top}(T)\]
\end{proposition}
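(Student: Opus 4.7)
The plan is to use \Cref{prop:the-set-A} to sandwich $\spa(S\rtimes_\tau T,\{0,\dots,n-1\},\epsilon)$ between $A_n(2\epsilon)$ and $E_{\epsilon}A_n(\epsilon/2)$, thereby reducing the slow entropy computation with scale $a_n(t)=\sum_{w\in L_{n,s}(Y)} e^{tq_n(w)}$ to a comparison between $A_n(\epsilon)$ and $a_n(t)$. The crucial structural point is that each set $\tau^{\{0,\dots,n-1\}}(w)$ has $m$-bounded gaps (by the definition of $m$ in \Cref{constant-m}), so \Cref{estimate} applies with the constant $C_m$, and the quantity $q_n(w)=r_n(w)\cdot C_m(\tau^{\{0,\dots,n-1\}}(w))$ is exactly the exponent in its statement. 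Thus \Cref{estimate} matches each summand of $A_n(\delta)$ with $e^{q_n(w)h_{top}(T)}$, which is precisely the summand of $a_n(h_{top}(T))$.

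For the lower bound $\lowerent_{\mathbf{a}}(S\rtimes_\tau T)\geq h_{top}(T)$ (assuming $h_{top}(T)<\infty$), fix $t<h_{top}(T)$ and $\epsilon'>0$ with $t<h_{top}(T)-\epsilon'$. Item (2) of \Cref{estimate} applies uniformly to all finite $F\Subset\Z$ with no length restriction, producing $\delta>0$ such that $\spa(T,F,\delta)\geq e^{C_m(F)|F|(h_{top}(T)-\epsilon')}$; summing over $w\in L_{n,s}(Y)$ gives $A_n(\delta)\geq\sum_w e^{q_n(w)(h_{top}(T)-\epsilon')}\geq a_n(t)$ for all $n$. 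Combined with $\spa(S\rtimes_\tau T,\{0,\dots,n-1\},\delta/2)\geq A_n(\delta)$ this yields $\liminf_n \spa(\cdots,\delta/2)/a_n(t)\geq 1$, hence $t\leq\lowerent_{\mathbf{a}}(S\rtimes_\tau T,\delta/2)$, and letting $t\to h_{top}(T)^-$ concludes this half.

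The upper bound is more delicate and is where the $\lambda$-unboundedness hypothesis plays the decisive role. Fix $t>h_{top}(T)$ and $\epsilon'>0$ with $t>h_{top}(T)+\epsilon'$, and let $\delta,n_0$ be given by item (1) of \Cref{estimate}. Decompose $A_n(\delta)=\Sigma_{<}+\Sigma_{\geq}$ according to whether $r_n(w)<n_0$ or $r_n(w)\geq n_0$. Item (1) bounds the large part as $\Sigma_{\geq}\leq \sum_{r_n(w)\geq n_0}e^{q_n(w)(h_{top}(T)+\epsilon')}\leq e^{-n_0(t-h_{top}(T)-\epsilon')}a_n(t)$, which is small. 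The obstacle is that the small part is bounded only by $\Sigma_{<}\leq M|L_{n,s}(Y)|$ for a constant $M$ bounding $\spa(T,F,\delta)$ over $F$ contained in intervals of length at most $mn_0$, and it is not a priori clear this is negligible compared to $a_n(t)$. The $\lambda$-unboundedness assumption resolves this: for any fixed $N\geq n_0$ and $n$ large, at least $(\lambda/2)|L_{n,s}(Y)|$ words satisfy $r_n(w)\geq N$, giving $a_n(t)\geq(\lambda/2)|L_{n,s}(Y)|e^{tN}$, so $\Sigma_{<}/a_n(t)\leq 2M/(\lambda e^{tN})$, which vanishes as $N\to\infty$. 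Choosing $N$ first and then $n$ large yields $\limsup_n A_n(\delta)/a_n(t)=0$ and therefore the upper bound.

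The case $h_{top}(T)=\infty$ follows by the analogous argument using the unbounded variant of \Cref{estimate}: for any prescribed $N\in\N$ there is $\delta>0$ with $\spa(T,F,\delta)\geq e^{C_m(F)|F|N}$ for every $F\Subset\Z$, so $A_n(\delta)\geq a_n(N)\geq e^{N-t}a_n(t)$ for any $t>0$ (using $\min_w q_n(w)\geq 1$), making both slow entropies infinite. The principal obstacle throughout is the role of $\Sigma_{<}$ in the upper bound: the words $w$ with bounded walk-range could in principle produce a contribution to $A_n(\delta)$ proportional to $|L_{n,s}(Y)|$ that dominates $a_n(t)$; the $\lambda$-unboundedness condition is precisely the mechanism ensuring that a definite fraction of $w$ instead contribute exponentially large summands to $a_n(t)$, keeping the denominator sufficiently large.
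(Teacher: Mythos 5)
Your lower bound and your treatment of the case $h_{top}(T)=\infty$ are correct and essentially identical in substance to the paper's argument (the paper phrases the lower bound via the minimum of $\spa(T,\tau^{\{0,\dots,n-1\}}(w),\epsilon)/e^{q_n(w)t}$ over $w$, but this is the same termwise comparison). The overall architecture of your upper bound — split $A_n(\delta)$ by the size of $r_n(w)$, control the large-range words by \Cref{estimate}(1), and use $\lambda$-unboundedness to show that the small-range words cannot dominate $a_n(t)$ — is also the paper's architecture, and your identification of $\Sigma_<$ as the place where $\lambda$-unboundedness is decisive is exactly right.

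However, there is a genuine gap in the execution of the upper bound. You split at the \emph{fixed} threshold $n_0$ supplied by \Cref{estimate}(1) and bound $\Sigma_{\geq}\leq e^{-n_0(t-h_{top}(T)-\epsilon')}\,a_n(t)$. This is a correct inequality, but $e^{-n_0(t-h_{top}(T)-\epsilon')}$ is a fixed positive constant: it does not tend to $0$, so your two bounds together only give $\limsup_n A_n(\delta)/a_n(t)\leq e^{-n_0(t-h_{top}(T)-\epsilon')}>0$. By the definition of $\upperent_{\textbf{a}}$, a strictly positive limsup places $t$ \emph{inside} the set over which the supremum is taken, which is the opposite of what you need; you must show the limsup is exactly $0$. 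The repair uses the same ingredients you already have: fix an arbitrary $\gamma>0$ and split at a $\gamma$-dependent threshold $n_1\geq n_0$ chosen so that $e^{-n_1(t-h_{top}(T)-\epsilon')}<\gamma$, which makes $\Sigma_{\geq n_1}\leq \gamma\, a_n(t)$; the constant $M=M_{n_1}$ bounding $\spa(T,F,\delta)$ for $|F|<n_1$ then depends on $\gamma$, but your $\lambda$-unboundedness mechanism absorbs this by choosing $N$ (and then $n$) after $n_1$, giving $\Sigma_{<n_1}\leq (\text{const})\cdot\gamma\, a_n(t)$ as well. Letting $\gamma\to 0$ yields $\limsup_n A_n(\delta)/a_n(t)=0$. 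This two-threshold bookkeeping (the paper's $n_1$ and $n_2$) is precisely what your write-up omits.
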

\begin{proof}
    We start by proving the inequality $
    \upperent_{\textbf{a}}(S\rtimes_\tau T)\leq h_{top}(T)$. This is clearly true if $h_{top}(T)=\infty$, so we assume $h_{top}(T)<\infty$. Let $t>0$ be arbitrary with $h_{top}(T)<t$. We will prove that $\upperent_{\textbf{a}}(S\rtimes_\tau T)\leq t$. According to the definition of upper slow entropy, we must show that for arbitrarily small $\epsilon$ we have
    \[\limsup_{n\to\infty} \frac{\spa(S\rtimes_\tau T,\{0,\dots,n-1\},\epsilon)}{a_n(t)}=0.\]
    By \Cref{prop:the-set-A} it suffices to prove that for arbitrarily small $\epsilon$
    \begin{equation}\label{eq:limsup}\limsup_{n\to\infty} \frac{A_n(\epsilon)}{a_n(t)}=0.\end{equation}
    Choose $t'$ with $h_{top}(T)<t'<t$. \Cref{estimate} shows that there is an arbitrarily small $\epsilon$, fixed from now on, with the following property. There is $n_0\in\N$ such that \[\spa(T,F,\epsilon)\leq e^{C_m(F)|F|t'}\]
    for every $F\Subset\Z$ with $m$-bounded gaps and $|F|\geq n_0$. Recall that $r_n(w)$ and $q_n(w)$ are defined in \Cref{def:scales}. If we apply the inequality above to $F=\tau^{\{0,\dots,n-1\}}(w)$ for some $w\in L_{n,s}(Y)$ with $r_n(w)\geq n_0$, we obtain
    \begin{equation}\label{eq:estimate-1} \spa(T,\tau^{\{0,\dots,n-1\}}(w),\epsilon)\leq e^{q_n(w)t'}.\end{equation}
    We will prove that for this $\epsilon$ the limsup in \Cref{eq:limsup} is 0. For this purpose we fix an arbitrary $\gamma>0$ and we prove that the limsup is at most a multiple of $\gamma$. We will now choose some parameters, the reason of these choices will become clear later. 
    
    Since $t-t'>0$, the function $1/e^{n(t-t')}$ converges to $0$ as $n\to\infty$, and we can find $n_1\geq n_0$ such that for all $n\geq n_1$ we have $1/e^{n(t-t')}<\gamma$. For a word $w\in L_{n,s}(Y)$ with $r_n(w)\geq n_1$, by \Cref{eq:estimate-1} we have \[\spa(T,\tau^{\{0,\dots,n-1\}}(w),\epsilon)/e^{q_n(w)t}\leq 1/{e^{q_n(w)(t-t')}}<\gamma.\] The relevant consequence for us is 
    \begin{equation}\label{n1-definition}
    \frac{\spa(T,\tau^{\{0,\dots,n-1\}}(w),\epsilon)}{e^{q_n(w)t}}<\gamma, \ \ \ r_n(w)\geq n_1.
    \end{equation}
    After choosing $n_1$, we let $N\in\N$ be equal to $\spa(T,I,\epsilon)$, where $I\subset\Z$ is an interval with length $2n_1m$. Therefore when  $F\Subset\Z$ is an arbitrary set with $m$-bounded gaps and $|F|\leq n_1$, a translate of $F$ is contained in $I$ and therefore $\spa(T,F,\epsilon)\leq N$ (\Cref{monotony-properties-of-spa}).  
    
    After choosing $N$ we observe that the function $N/e^{nt}$ converges to $0$ as $n\to\infty$, and then we can find $n_2\geq n_1$ with the property 
    \begin{equation}\label{property-of-n2}
    N/e^{nt}<\gamma, \ \ \ \forall n\geq n_2.\end{equation}
    We now use the fact that $(Y,S,\tau)$ satisfies  \Cref{def:good}. This property ensures that we can choose $n_3$ such that for every $n\geq n_3$, the proportion of words $w\in L_{n,s}(Y)$ with $r_n(w)\geq n_2$ is at least $1/\ell$. We claim that \begin{equation}\label{bound-A_n/a_n}\frac{A_n(\epsilon)}{a_n(t)}<\gamma(\ell+2), \ \ \ \forall n\geq n_3.\end{equation}
    Recall that $A_n(\epsilon)$ is equal to the sum of $\spa(T,\tau^{\{0,\dots,n-1\}}(w),\epsilon)$, with $w$ ranging over $L_{n,s}(Y)$. We will separate this sum in the cases $r_n(w)<n_1$ and $r_n(w)\geq n_1$. If $r_n(w)\geq n_1$ then by \Cref{eq:estimate-1} we have  
    $\spa(T,\tau^{\{0,\dots,n\}}(w),\epsilon)\leq \gamma e^{q_n(w)t}$. Therefore \begin{align*}\sum_{\substack{w\in L_{n,s}(Y)\\ r_n(w)\geq n_1}}\spa(T,\tau^{\{0,\dots,n-1\}}(w),\epsilon)&\leq \gamma\sum_{\substack{w\in L_{n,s}(Y)\\ r_n(w)\geq n_1}} e^{q_n(w)t} \\ &\leq \gamma a_n(t).
    \end{align*}
    We now consider the  words $w$ with $r_n(w)< n_1$.  By definition of $N$ in this case we have $\spa(T,\tau^{\{0,\dots,n-1\}}(w),\epsilon)\leq N$. Taking the sum over words $w$ with $r_n(w)< n_1$ we have
    \[\sum_{\substack{w\in L_{n,s}(Y)\\ r_n(w)< n_1}}\spa(T,\tau^{\{0,\dots,n-1\}}(w),\epsilon)\leq |L_{n,s}(Y)|N.
    \]
    Since the proportion of words in $L_{n,s}(Y)$ with $r_n(w)\geq n_2$ is at least $1/\ell$, we have $|L_{n,s}(Y)|\leq |\{w\in L_{n,s}(Y) : r_n(w)\geq n_2\}|\cdot (\ell+1)$. Therefore the inequality above can be continued as
    \begin{align*}
    &\leq |\{w\in L_{n,s}(Y) : q_n(w)\geq n_2\}| (\ell+1) N\\&=(\ell+1)\sum_{\substack{w\in L_{n,s}(Y)\\ r_n(w)\geq  n_2}} N
    \end{align*}
    Our choice of $n_2$ ensures that for every $w$ with $r_n(w)\geq n_2$ we have $N<\gamma e^{q_n(w)t}$ (by \Cref{property-of-n2} and because  $r_n(w)\leq q_n(w)$). Therefore we can bound the previous sum as 
    \(
    \leq (\ell+1) \sum_{\substack{w\in L_{n,s}(Y)\\ r_n(w)\geq  n_2}} \gamma e^{q_n(w)t}
    \leq (\ell+1)\gamma a_n(t)
    \)
    We have proved that for all $n\geq n_3$ we have  $A_n(\epsilon)\leq (\ell+2) \gamma a_n(t)$ and therefore $A_n(\epsilon)/a_n(t)\leq (\ell+2) \gamma$. This shows that the limsup of $A_n(t)/a_n(t)$ as $n\to\infty$ is at most $(\ell+2) \gamma$. Since $\gamma$ is arbitrary, it follows that this limsup us $0$. Thus we have proved \Cref{eq:limsup}. Since this holds for  $\epsilon$ arbitrarily small, we have proved our claim that $\upperent_{\textbf{a}}(S\rtimes_\tau T)\leq h_{top}(T)$. 
    We will now prove that $h_{top}(T)\leq \lowerent_{\textbf{a}}(T)$. This is clearly true if $h_{top}(T)=0$, so we assume now $h_{top}(T)\in (0,\infty]$. We fix an arbitrary $t$ with $0<t<h_{top}(T)$ and we prove that $t\leq \lowerent_{\textbf{a}}(S\rtimes_\tau T)$. According to the definition of lower slow entropy we must prove that for arbitrarily small $\epsilon$ we have
    \[\liminf_{n\to\infty} \frac{\spa(S\rtimes_\tau T,\{0,\dots,n-1\},\epsilon)}{a_n(t)}>0.\]
    Thanks to \Cref{prop:the-set-A}, it suffices to prove that for arbitrarily small $\epsilon$ we have \begin{equation}\label{liminf}\liminf_{n\to\infty} \frac{A_n(\epsilon)}{a_n(t)}>0.\end{equation}
    Choose $t'$ such that $t<t'<h_{top}(T)$. By \Cref{estimate}, for $\epsilon$ arbitrarily small we have that
    \begin{equation}\label{lower-bound-spa}
    \spa(T,F,\epsilon)\geq e^{C(F)|F|t'}, \ \ \ F\Subset \Z.
    \end{equation}
    We will prove that for this $\epsilon$, \Cref{liminf} holds. For each $n\in\N$ we define $g(n)$ as 
    \[g(n)=\min \{\frac{\spa(T,\tau^{\{0,\dots,n-1\}}(w),\epsilon)}{e^{q_n(w)t}} : w\in L_{n,s}(Y)\}\]
    We also let $w_n$ be a word realizing this minimum, that is \[g(n)=\frac{\spa(T,\tau^{\{0,\dots,n-1\}}(w_n),\epsilon)}{e^{q(w_n)t}}.\]
    A direct computation shows that  $g(n)\cdot a_n(t)\leq A_n(\epsilon)$. Therefore $A_n(\epsilon)/a_n(t)\geq g(n)$ for all $n$, and to prove \Cref{liminf} it suffices to show
    \[\liminf_{n\to\infty} g(n)>0.\]
    By \Cref{lower-bound-spa},  for each $n$ we have $\spa(T,\tau^{\{0,\dots,n-1\}}(w_n),\epsilon)\geq e^{q(w_n)t'}$, so
\(\liminf_{n\to\infty} g(n)=\liminf_{n\to\infty} \frac{\spa(T,\tau^{\{0,\dots,n-1\}}(w_n),\epsilon)}{e^{q(w_n)t}}\geq \liminf_{n\to\infty} \frac{e^{q(w_n)t'}}{e^{q(w_n)t}}
=\liminf_{n\to\infty}e^{q(w_n)(t'-t)}
\)
Since $q(w_n)\geq 1$ for each $n$ and $t-t'>0$, this liminf is nonzero.  Thus we have proved \Cref{liminf}.

Finally, a general fact is that $\lowerent_{\textbf{a}}(S\rtimes_\tau T)\leq \upperent_{\textbf{a}}(S\rtimes_\tau T)$. Thus we have
\[h_{top}(T)\leq \lowerent_{\textbf{a}}(S\rtimes_\tau T)\leq \upperent_{\textbf{a}}(S\rtimes_\tau T)\leq h_{top}(T).\]
This proves that $\lowerent_{\textbf{a}}(S\rtimes_\tau T)$ and $\upperent_{\textbf{a}}(S\rtimes_\tau T)$ are both equal to $h_{top}(T)$.
\end{proof}
In the next result we prove \Cref{thm:slow-entropy}. We remark that the proof follows similar ideas to the previous one. Instead of \Cref{estimate} we invoke abstract estimates that follow from the definition of $\lowerent_{\textbf{b}}(T)$ and $\upperent_{\textbf{b}}(T)$. The hypothesis that $\tau$ attains values in $\{-1,0,1\}$ ensures that the sets $\tau^{\{0,\dots,n-1\}}(w)$ are always intervals, and this simplifies some considerations. Recall that $b_n(t)$ is an arbitrary scale, and $c_n(t)$ depends on $b_n(t)$ and is defined in \Cref{def:scales}.
\begin{proposition}[Proof of \Cref{thm:slow-entropy}]
Suppose that for every $y\in Y$ we have $\tau(y)\in \{-1,0,1\}$. Then
\[\upperent_{\textbf{c}}(S\rtimes_\tau T)\leq \upperent_{\textbf{b}}(T)\]
\[\lowerent_{\textbf{c}}(S\rtimes_\tau T)\geq \lowerent_{\textbf{b}}(T)\]
\end{proposition}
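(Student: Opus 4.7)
The plan is to follow the template of the proof of \Cref{prop:entropy} but replace the appeal to \Cref{estimate} by estimates that come directly from the definitions of $\lowerent_{\textbf{b}}(T)$ and $\upperent_{\textbf{b}}(T)$. The key simplification provided by the new hypothesis $\tau(y)\in\{-1,0,1\}$ is that the partial sums $\tau^i(w)$ vary by at most one at each step, so the set $\tau^{\{0,\dots,n-1\}}(w)$ is always an interval of length $r_n(w)$. Together with the translation invariance in \Cref{monotony-properties-of-spa}, this gives the clean identity $\spa(T,\tau^{\{0,\dots,n-1\}}(w),\epsilon)=\spa(T,\{0,\dots,r_n(w)-1\},\epsilon)$, so that
\[A_n(\epsilon)=\sum_{w\in L_{n,s}(Y)}\spa(T,\{0,\dots,r_n(w)-1\},\epsilon).\]
By \Cref{prop:the-set-A}, it suffices to compare $A_n(\epsilon)$ with $c_n(t)=\sum_w b_{r_n(w)}(t)$.

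For the upper bound, I would fix $t>\upperent_{\textbf{b}}(T)$, pick $\epsilon$ so that $\limsup_n \spa(T,\{0,\dots,n-1\},\epsilon)/b_n(t)=0$, and show $\limsup_n A_n(\epsilon)/c_n(t)=0$. Given $\gamma>0$, choose $n_1$ such that $\spa(T,\{0,\dots,n-1\},\epsilon)\leq \gamma b_n(t)$ for all $n\geq n_1$. Split the sum defining $A_n(\epsilon)$ according to whether $r_n(w)\geq n_1$ or not. The first piece is directly bounded by $\gamma c_n(t)$. For the second piece each term is bounded by $N:=\spa(T,\{0,\dots,n_1-1\},\epsilon)$, and to absorb this into $\gamma c_n(t)$ I would pick $N_2\geq n_1$ large enough that $N\leq \gamma b_{N_2}(t)$ (possible because $b_n(t)\to\infty$), and then invoke the $1/\ell$-unboundedness of $\tau$ to replace $|L_{n,s}(Y)|$ by $(\ell+1)|\{w:r_n(w)\geq N_2\}|$ for $n$ large. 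This turns the second piece into at most $(\ell+1)\gamma c_n(t)$, yielding $A_n(\epsilon)/c_n(t)\leq (\ell+2)\gamma$ for all large $n$, hence the limsup is zero.

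For the lower bound, fix $t<\lowerent_{\textbf{b}}(T)$ and $\epsilon$ such that $\liminf_n \spa(T,\{0,\dots,n-1\},\epsilon)/b_n(t)>0$, so that there are $\delta>0$ and $n_1$ with $\spa(T,\{0,\dots,n-1\},\epsilon)\geq \delta b_n(t)$ for $n\geq n_1$. I will show $\liminf_n A_n(\epsilon)/c_n(t)>0$ by producing a constant $K$ with $c_n(t)\leq K\cdot A_n(\epsilon)$ for $n$ large. Split $c_n(t)$ according to whether $r_n(w)\geq n_1$: the piece with $r_n(w)\geq n_1$ is bounded by $\delta^{-1}A_n(\epsilon)$ using the lower bound on $\spa$. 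The piece with $r_n(w)<n_1$ is bounded by $|L_{n,s}(Y)|\,b_{n_1}(t)$; to control this using $A_n(\epsilon)$, choose $N_2\geq n_1$ with $\delta b_{N_2}(t)\geq b_{n_1}(t)$, so that for every $w$ with $r_n(w)\geq N_2$ one has $\spa(T,\{0,\dots,r_n(w)-1\},\epsilon)\geq b_{n_1}(t)$. Then $1/\ell$-unboundedness gives $|L_{n,s}(Y)|\,b_{n_1}(t)\leq (\ell+1)\sum_{w:r_n(w)\geq N_2}\spa(T,\{0,\dots,r_n(w)-1\},\epsilon)\leq (\ell+1)A_n(\epsilon)$.

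The most delicate step in both halves is the same: handling the ``short'' words with $r_n(w)$ below the threshold where the slow-entropy estimates for $T$ take effect. The devices to overcome this are the monotonicity of $b_n(t)$ in $n$, which lets us choose a higher threshold $N_2$ where $b_{N_2}(t)$ dominates the constants coming from the short-word piece, and the $\lambda$-unboundedness, which guarantees that a positive proportion of words sit above $N_2$ and can absorb the short-word contribution. Once these bounds are in place, combining them with \Cref{prop:the-set-A} delivers both inequalities in the statement.
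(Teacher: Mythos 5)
Your proof is correct. The upper-bound half is essentially identical to the paper's argument: the same reduction to comparing $A_n(\epsilon)$ with $c_n(t)$ via \Cref{prop:the-set-A}, the same interval identity $\spa(T,\tau^{\{0,\dots,n-1\}}(w),\epsilon)=\spa(T,\{0,\dots,r_n(w)-1\},\epsilon)$, and the same split at a threshold $n_1$ with the short words absorbed through a second threshold and the $1/\ell$-unboundedness. The lower-bound half, however, takes a genuinely different route. The paper sets $g(n)=\min_w \spa(T,\tau^{\{0,\dots,n-1\}}(w),\epsilon)/b_{r_n(w)}(t)$, notes $A_n(\epsilon)\geq g(n)\,c_n(t)$, and then observes that the ratio $\spa(T,\{0,\dots,m-1\},\epsilon)/b_m(t)$ is bounded below by a positive constant \emph{uniformly in $m$}: it exceeds some $\gamma>0$ for $m\geq n_0$ by the liminf hypothesis, and for $m<n_0$ it is at least the minimum $\eta>0$ of finitely many positive quantities; hence $g(n)\geq\min\{\gamma,\eta\}$ and the unboundedness of $\tau$ is not used at all in this half. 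You instead mirror the upper-bound decomposition: split $c_n(t)$ at the threshold $n_1$, bound the long-word piece by $\delta^{-1}A_n(\epsilon)$, and absorb the short-word piece $|L_{n,s}(Y)|\,b_{n_1}(t)$ into $(\ell+1)A_n(\epsilon)$ via a second threshold $N_2$ and the $1/\ell$-unboundedness. Both arguments are valid (your use of the monotonicity of $n\mapsto b_n(t)$ at the thresholds is exactly what is needed); the paper's version is slightly leaner because it exploits that there are only finitely many ``short'' lengths, each contributing a strictly positive ratio, whereas yours buys symmetry between the two halves at the cost of invoking the unboundedness hypothesis where it is not actually needed. The only cosmetic omission is that you do not dispose of the trivial cases $\upperent_{\textbf{b}}(T)=\infty$ and $\lowerent_{\textbf{b}}(T)=0$ before fixing $t$, but this does not affect correctness.
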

\begin{proof}
Observe that the assumption on $\tau$ implies that for every $n$ and $w\in L_{n,s}(Y)$ the set $\tau^{\{0,\dots,n-1\}}(w)$ is an interval. Since its cardinality is $r_n(w)$, we can always shift this interval and obtain that $\spa(T,\tau^{\{0,\dots,n-1\}}(w),\epsilon)$ is equal to $\spa(T,\{0,\dots,r_n(w)-1\},\epsilon)$ (\Cref{monotony-properties-of-spa}). We will use this repeatedly in this proof. 

We start by proving $\upperent_{\textbf{c}}(S\rtimes_\tau T)\leq \upperent_{\textbf{b}}(T)$. This is clearly true if $\upperent_{\textbf{b}}(T)=\infty$, so we assume $\upperent_{\textbf{b}}(T)<\infty$ in what follows. We take an arbitrary $t$ with $\upperent_{\textbf{b}}(T)<t$ and we prove $\upperent_{\textbf{c}}(S\rtimes_\tau T)\leq t$. According to the definition of upper slow entropy we must prove that for arbitrarily small $\epsilon$ we have
\[\limsup_{n\to\infty} \frac{\spa(S\rtimes_\tau T,\{0,\dots,n-1\},\epsilon)}{c_n(t)}=0.\]
Thanks to \Cref{prop:the-set-A} it suffices to prove that for arbitrarily small $\epsilon$ we have
\begin{equation}\label{limsup-slow}
    \limsup_{n\to\infty} \frac{A_n(\epsilon)}{c_n(t)}=0.
\end{equation}
Since $\upperent_{\textbf{b}}(T)<t$, we have $\upperent_{\textbf{b}}(T,\epsilon)<t$ for arbitrarily small $\epsilon$. Fix $\epsilon$ with this property. We will prove that the limsup in \Cref{limsup-slow} is 0 for this $\epsilon$. For this we take an arbitrary $\gamma$ and we prove that the limsup is at most a multiple of $\gamma$. 

Since $\upperent_{\textbf{b}}(T,\epsilon)<t$, the definition of upper slow entropy shows that the limsup of $\spa(T,\{0,\dots,n-1\},\epsilon)/b_n(t)$ is $0$.  Therefore we can take $n_1$ such that 
\begin{equation}\label{n_1-slow}
    \frac{\spa(T,\{0,\dots,n-1\},\epsilon)}{b_n(t)}<\gamma, \ \ \ \forall n\geq n_1.
\end{equation}
After choosing $n_1$ we let $N=\spa(T,\{0,\dots,n_1-1\},\epsilon)$. By definition of scale we have that $n\mapsto b_n(t)$ tends to infinity, so we can choose $n_2$ with the property 
\begin{equation}\label{n_2-slow}N/b_n(t)\leq \gamma, \ \ \ \forall n\geq n_2.\end{equation}
Since $(Y,S,\tau)$ satisfies \Cref{def:good}, it is possible to choose $n_3$ such that for every $n\geq n_3$, the proportion of words $w\in L_{n,s}(Y)$ with $r_n(w)\geq n_2$ is at least $1/\ell$. 

Let $n\geq n_3$.  \Cref{limsup-slow} shows that 
\begin{align*}
\sum_{\substack{w\in L_{n,s}(Y)\\ r_n(w)\geq n_1}}\spa(T,\tau^{\{0,\dots,n-1\}}(w),\epsilon)&\leq \gamma\sum_{\substack{w\in L_{n,s}(Y)\\ r_n(w)\geq n_1}} b_{q_n(w)}(t) \\ &\leq \gamma c_n(t).
\end{align*}
Using \Cref{n_2-slow}, the same argument used in the proof of \Cref{prop:entropy} shows that
\begin{align*}
\sum_{\substack{w\in L_{n,s}(Y)\\ r_n(w)< n_1}}\spa(T,\tau^{\{0,\dots,n-1\}}(w),\epsilon)&\leq (\ell+1)\gamma c_n(t).
\end{align*}
We obtain that   $A_n(\epsilon)\leq (\ell+2)\gamma c_n(t)$ for all $n\geq n_3$, and therefore the limsup of $A_n(\epsilon)/c_n(t)$ is at most $(\ell+2)\gamma$. 
Since $\gamma$ is arbitrary and $\ell$ only depends on $\tau$, this limsup must be $0$. Thus we have proved \Cref{limsup-slow}, and consequently that $\upperent_{\textbf{c}}(S\rtimes_\tau T)\leq \upperent_{\textbf{b}}(T)$. 

We will now prove that $\lowerent_{\textbf{c}}(S\rtimes_\tau T)\geq \lowerent_{\textbf{b}}(T)$. This is certainly true if $\lowerent_{\textbf{b}}(T)=0$. Assume now that $\lowerent_{\textbf{b}}(T)\in (0,\infty]$, and let $t$ with $0<t<\lowerent_{\textbf{b}}(T)$. We will prove that $t\leq \lowerent_{\textbf{c}}(S\rtimes_\tau T)$. According to the definition of $\lowerent$ we must show that for arbitrarily small $\epsilon$ we have
\[\liminf_{n\to\infty} \frac{\spa(S\rtimes_\tau T,\{0,\dots,n-1\},\epsilon)}{c_n(t)}> 0.\]
Thanks to \Cref{prop:the-set-A}, it suffices to show that for arbitrarily small $\epsilon$ we have
\begin{equation}\label{liminf-slow}\liminf_{n\to\infty} \frac{A_n(\epsilon)}{c_n(t)}> 0\end{equation}
Since $t$ is chosen with $t<\lowerent_{\textbf{b}}(T)$, for all $\epsilon$ small enough we have $t<\lowerent_{\textbf{b}}(T,\epsilon)$. We fix $\epsilon$ with this property and we prove that \Cref{liminf-slow} holds for this $\epsilon$. For each $n\in\N$ we define $g(n)$ as 
    \[g(n)=\min \{\frac{\spa(T,\tau^{\{0,\dots,n-1\}}(w),\epsilon)}{b_{r_n(w)}(t)} : w\in L_{n,s}(Y)\}\]
We also let $w_n$ be a word realizing this minimum, that is \[g(n)=\frac{\spa(T,\tau^{\{0,\dots,n-1\}}(w_n),\epsilon)}{b_{r_n(w)}(t)}.\]
A direct computation shows that $g(n)\cdot c_n(t)\leq A_n(\epsilon)$. Therefore $A_n(\epsilon)/a_n(t)\geq g(n)$, and to prove \Cref{liminf} it suffices to show
\[\liminf_{n\to\infty} g(n)>0.\]
Since $t$ is chosen with $t<\lowerent_{\textbf{b}}(T,\epsilon)$, we have by definition of $\lowerent$ that the liminf of $\spa(T,\{0,\dots,n-1\},\epsilon)/b_n(t)$ is positive. Choose $\gamma>0$ to be strictly smaller than the liminf of $S(T,\{0,\dots,n-1\},\epsilon)/b_n(t)$. Therefore we can find $n_0$ such that for every $n\geq n_0$ we have $\spa(T,\{0,\dots,n-1\},\epsilon)/b_n(t)>\gamma$. We now let $\eta$ be equal to the minimum of  $\spa(T,\{0,\dots,n-1\},\epsilon)/b_n(t)$, with $n$ ranging over $\{0,\dots,n_0-1\}$. Observe that $\eta>0$. Since $g(n)$ is equal to $\spa(T,\{0,\dots,r_n(w_n)-1\},\epsilon)/b_{r_n(w_n)}(t)$, it follows that $g(n)\geq \gamma$ whenever $r_n(w_n)\geq n_0$ and $g(n)\geq \eta$ whenever $r_n(w_n)\leq n_0$. In particular $g(n)\geq \min\{\gamma,\eta\}$ for every $n$, so the liminf of $g(n)$ is  positive.  
\end{proof}
\begin{remark}\label{non-invertible}
    If $(Y,S)$ is a $\N$-subshift instead of a $\Z$-subshift then all the arguments presented here can be easily adapted. One only needs to redefine $L_{n,s}(Y)$ as $L_{\{0,\dots,s+n-1\}}(Y)$ instead of $L_{\{-s,\dots,s+n-1\}}(Y)$  (as in \Cref{subsec:subshifts}). 
\end{remark}
\section{Examples}
Given a tuple $(Y,S,\tau)$ of a subshift $S\colon Y\to Y$ and a continuous cocycle $\tau\colon Y\to \Z$, we consider the family of skew products $S\rtimes_\tau T$, where $(X,T)$ is an arbitrary invertible topological dynamical system. In the next result we prove a sufficient condition on $(Y,S,\tau)$ that implies that all systems of the form $S\rtimes_\tau T$ have the same topological  entropy. This justifies the application of other invariants different from topological entropy. 



\label{sec:examples}
\begin{proposition}\label{criterion-zero-entropy}
    Let $(Y,S)$ be a subshift,  and let $\tau\colon Y\to\Z$ be a continuous function such that for all $y\in Y$ we have  
    \[\lim_{n\to\infty}\frac{1}{n}\sum_{i=0}^{n-1}\tau(S^iy)= 0,\]
    and furthermore this convergence is uniform over $y\in Y$. Then for every invertible topological dynamical system $(X,T)$ we have $h_{top}(S\rtimes_\tau T)=h_{top}(S)$.
\end{proposition}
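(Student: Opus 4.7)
The inequality $h_{top}(S\rtimes_\tau T)\geq h_{top}(S)$ is immediate: the projection $(y,x)\mapsto y$ is a topological factor map from $S\rtimes_\tau T$ onto $S$, and factor maps do not increase topological entropy. Thus the content is the reverse inequality $h_{top}(S\rtimes_\tau T)\leq h_{top}(S)$, which I would establish using \Cref{prop:the-set-A}. Fix $\epsilon\in(0,2^{-s-1})$; by that proposition it suffices to prove
\[\limsup_{n\to\infty}\frac{1}{n}\log A_n(\epsilon/2)\leq h_{top}(S).\]

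The main observation is that the uniform convergence hypothesis forces $r_n(w)=|\tau^{\{0,\dots,n-1\}}(w)|$ to be sublinear in $n$, uniformly in $w$. Indeed, given $\delta>0$, uniform convergence yields $n_0\in\N$ such that $|\tau^i(y)|\leq \delta i$ for every $i\geq n_0$ and every $y\in Y$, while for $i<n_0$ the trivial bound from \Cref{constant-m} gives $|\tau^i(y)|\leq mi\leq mn_0$. Hence as soon as $n\geq mn_0/\delta$, we have $\tau^i(y)\in\{-\delta n,\dots,\delta n\}$ for every $i\in\{0,\dots,n-1\}$ and every $y\in Y$, and consequently $r_n(w)\leq 2\delta n+1$ for every $w\in L_{n,s}(Y)$.

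To bound $\spa(T,\tau^{\{0,\dots,n-1\}}(w),\epsilon/2)$ I would use a crude estimate rather than the finer \Cref{estimate}: choose any finite open cover $\U$ of $X$ whose elements have diameter less than $\epsilon/2$ and set $k=|\U|$. By \Cref{wellknown}, $\spa(T,F,\epsilon/2)\leq N(\U^F)\leq k^{|F|}$ for every $F\Subset\Z$. Combined with the previous paragraph this yields, for $n$ large,
\[A_n(\epsilon/2)\leq |L_{n,s}(Y)|\cdot k^{2\delta n+1},\]
so that, using $\frac{1}{n}\log |L_{n,s}(Y)|\to h_{top}(S)$,
\[\limsup_{n\to\infty}\frac{1}{n}\log A_n(\epsilon/2)\leq h_{top}(S)+2\delta\log k.\]
Since $\delta>0$ was arbitrary and $k$ depends only on $\epsilon$, letting $\delta\to 0$ gives the desired estimate, and then \Cref{prop:the-set-A} together with $\epsilon\to 0^+$ concludes the proof.

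The only delicate step is the passage from the uniform convergence hypothesis to the sublinear bound $r_n(w)\leq 2\delta n+1$; once that is in place, the remaining estimates are routine. As a side benefit, the argument works without any assumption on $h_{top}(T)$, since the crude bound $k^{|F|}$ has a base depending only on the covering scale $\epsilon$, not on the dynamics of $T$.
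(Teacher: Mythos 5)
Your proof is correct, and its skeleton coincides with the paper's: the inequality $h_{top}(S\rtimes_\tau T)\geq h_{top}(S)$ via the projection factor map, the reduction to $A_n(\cdot)$ via \Cref{prop:the-set-A}, and the key observation that uniform convergence of the averages forces $r_n(w)\leq 2\delta n+1$ uniformly in $w\in L_{n,s}(Y)$ for $n$ large (your splitting at $i<n_0$ versus $i\geq n_0$, using $|\tau|\leq m$ from \Cref{constant-m}, is exactly the right way to pass from the asymptotic hypothesis to a bound valid for all $i\leq n$). Where you genuinely diverge is in the final estimate. The paper encloses $\tau^{\{0,\dots,n-1\}}(w)$ in an interval of length $O(\eta n)$ and bounds $\spa(T,\tau^{\{0,\dots,n-1\}}(w),\epsilon/2)$ by $\spa(T,\{0,\dots,\lceil\eta n\rceil-1\},\epsilon/2)$, arriving at $h_{top}(S\rtimes_\tau T)\leq h_{top}(S)+C\eta\,h_{top}(T)$ before letting $\eta\to 0$; you instead use the crude cover bound $\spa(T,F,\epsilon/2)\leq N(\U^F)\leq k^{|F|}$ with $k=|\U|$ depending only on $\epsilon$, arriving at $h_{top}(S\rtimes_\tau T,\epsilon)\leq h_{top}(S)+2\delta\log k$. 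The trade-off is instructive: the paper's route gives the sharper intermediate inequality, isolating the fiber's contribution as $\eta\,h_{top}(T)$, but the limit $\eta\to 0$ only yields the conclusion when $h_{top}(T)<\infty$, so the sketch as written needs an extra word for fibers of infinite entropy; your cruder bound is uniform in $T$ and, as you note, closes that case automatically. Both versions are complete proofs modulo the routine facts that $\tfrac{1}{n}\log|L_{n,s}(Y)|\to h_{top}(S)$ for a subshift and that the constant $E_\epsilon$ in \Cref{prop:the-set-A} does not depend on $n$.
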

\begin{proof}[Proof sketch]
The nontrivial inequality is $h_{top}(S\rtimes_\tau T)\leq h_{top}(S)$. Let $\eta\in(0,1)$ be arbitrary. The hypothesis ensures that there is $n_0$ such that for all $n\geq n_0$ we have $\frac{1}{n}\sum_{i=0}^{n-1}\tau(S^iy)<\eta/2$ for all $y\in Y$. Using this and \Cref{prop:the-set-A} one obtains that for all $n$ large enough
\[\spa(S\rtimes_\tau T,\{0,\dots,n-1\},\epsilon)\leq E\cdot |L_{n,s}(Y)|\cdot \spa(T,\{0,\dots,\lceil \eta n\rceil-1,\epsilon/2).\]
Here $E$ is a constant independent of $n$. Applying logarithm, dividing by $n$, and taking the limsup one obtains $h_{top}(S\rtimes_\tau T)\leq h_{top}(S)+\eta h_{top}(T)$. As $\eta\in(0,1)$ is arbitrary it follows that   $h_{top}(S\rtimes_\tau T)\leq h_{top}(S)$ as claimed.
\end{proof}
We remark that the condition in the last result is met when $(Y,S)$ is uniquely ergodic and $\tau$ has zero mean with respect to the unique invariant measure \cite[Theorem 4.10]{einsiedler_ergodic_2011}.

The next result provides a condition that implies that $\tau$ is $1$-unbounded.  
\begin{proposition}\label{criterion-1-unbounded}
    Let $(Y,S)$ be a subshift and let $\tau\colon Y\to \Z$ be a continuous map such that $n\mapsto\sum_{i=0}^{n-1}\tau(S^iy)$ is unbounded for every $y\in Y$. Then $\tau$ is $1$-unbounded.
\end{proposition}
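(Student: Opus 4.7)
Recall that $\tau$ being $1$-unbounded means that for every $N\in\N$,
\[
\liminf_{n\to\infty}\frac{|\{w\in L_{n,s}(Y):r_n(w)\geq N\}|}{|L_{n,s}(Y)|}\geq 1,
\]
so the proportion in fact converges to $1$. My plan is to prove something stronger: for every $N\in\N$ there exists $n^*=n^*(N)$ such that for \emph{all} $n\geq n^*$ and \emph{every} $w\in L_{n,s}(Y)$, $r_n(w)\geq N$. This makes the proportion above equal to $1$ for $n\geq n^*$ and immediately yields the conclusion.

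The key step is a compactness/uniformity argument. For fixed $N$ and $n$, consider the set $U_n=\{y\in Y:r_n(y)\geq N\}$. Since $r_n$ depends only on $y|_{\{-s,\dots,n+s-1\}}$ (because $\tau$ depends on a window of radius $s$, by \eqref{constant-s}), $U_n$ is a finite union of cylinders in $Y$, hence clopen. Moreover, $U_n\subset U_{n+1}$ since $\tau^{\{0,\dots,n-1\}}(y)\subseteq\tau^{\{0,\dots,n\}}(y)$ gives that $n\mapsto r_n(y)$ is non-decreasing. The hypothesis that $n\mapsto\tau^n(y)=\sum_{i=0}^{n-1}\tau(S^iy)$ is unbounded for every $y\in Y$ implies that each $y$ eventually realizes $N$ distinct values of its partial sums, so $Y=\bigcup_{n\in\N}U_n$. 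By compactness of $Y$, the increasing open cover $\{U_n\}$ admits a finite subcover, and therefore $Y=U_{n^*}$ for some $n^*=n^*(N)$.

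Once uniformity is established, the conclusion follows from monotonicity: if $w\in L_{n,s}(Y)$ with $n\geq n^*$, pick any $y\in Y$ with $y|_{\{-s,\dots,n+s-1\}}=w$; then $r_n(w)=|\tau^{\{0,\dots,n-1\}}(y)|\geq|\tau^{\{0,\dots,n^*-1\}}(y)|=r_{n^*}(y)\geq N$. Hence every word in $L_{n,s}(Y)$ satisfies $r_n(w)\geq N$, so the proportion in the definition of $\lambda$-unboundedness equals $1$ for all $n\geq n^*$.

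The only nontrivial ingredient is the compactness argument, which hinges on the fact that $\{r_n\geq N\}$ is clopen; I do not foresee any obstacle beyond carefully noting that $r_n$ factors through the restriction map $y\mapsto y|_{\{-s,\dots,n+s-1\}}$ and that unboundedness of the partial sums forces $\tau^{\{0,\dots,n-1\}}(y)$ to grow without bound, since $\tau$ takes only finitely many values (a bounded sequence of integers with unbounded partial sums must visit infinitely many integers).
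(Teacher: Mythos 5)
Your proof is correct and rests on the same mechanism as the paper's: compactness of $Y$ together with the fact that $r_n$ is determined by $y|_{\{-s,\dots,n+s-1\}}$ forces the pointwise unboundedness of the partial sums to be uniform over $Y$. You run the argument directly via a finite subcover of the increasing clopen sets $\{r_n\geq N\}$, whereas the paper argues by contradiction with a sequential-compactness accumulation point of configurations extending bad words; these are dual formulations of the same idea, and both in fact yield the stronger conclusion that for all large $n$ \emph{every} word of $L_{n,s}(Y)$ satisfies $r_n(w)\geq N$.
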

\begin{proof}
    We claim that for every $N\in\N$ we have \[\liminf_{n\to\infty} \frac{|\{w\in L_{n,s}(Y) : r_n(w)\geq N\}|}{|L_{n,s}(Y)|}=1\]
    Suppose for a contradiction that this is false. Then there is $N\in\N$ such that the liminf above is strictly smaller than $1$. We can find an increasing sequence $(n_k)_{k\in\N}$ of natural numbers, and words $w_k\in L_{n_k,s}(Y)$ such that $r_{n_k}(w_{n_k})< N$. For each $w_k$ we can choose $y_k\in Y$ with $w_k\sqsubset y_k$. Since $Y$ is compact the sequence $(y_k)_{k\in\N}$ has an accumulation point $y^*$. We claim that $r_n(y)<N$ for all $n\in\N$. Indeed, for arbitrary $n$ we can find $t\in\N$ large enough so that $r_n(y)$ depends on the restriction of $y$ to $\{-s,\dots,t+s-1\}$. Then we can find $k$ large enough so that $n_k>t$ and such that $y_k$ and $y$ are $2^{-t-s}$ close. This implies that they have the same restriction to $\{-t-s,\dots,t+s\}$ so in particular $r_n(y^{\ast})=r_n(y_k)$. We have $r_n(y_k)\leq r_{n_k}(y_k)$ as $n\leq n_k$, and $r_{n_k}(y_k)< N$ by definition of $y_k$ and $w_k$. Thus $r_n(y^{\ast})<N$ for all $n$. This implies that $n\mapsto\sum_{i=0}^{n-1}\tau(S^iy^\ast)$ is bounded, a contradiction to the hypothesis. 
\end{proof}

Recall that a continuous cocycle $\tau\colon Y\to\Z$ is called a coboundary when  $\tau=g-g\circ S$ for some continuous function $g\colon Y\to\R$. The  Gottschalk-Hedlund theorem \cite[Theorem 2.9.4]{katok_introduction_1995} states that if $\tau$ is not a cocycle then  $n\mapsto\sum_{i=0}^{n-1}\tau(S^iy)$ is unbounded for every $y\in Y$. From this an the previous result we obtain the following. 
\begin{proposition}\label{gottschalk-non-coboundary}
    Let $(Y,S)$ be a minimal subshift and let $\tau\colon Y\to\Z$ be a continuous map which is not a coboundary. Then $\tau$ is $1$-unbounded.
\end{proposition}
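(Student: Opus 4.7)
The plan is to combine the Gottschalk--Hedlund theorem with \Cref{criterion-1-unbounded}. Concretely, \Cref{criterion-1-unbounded} already does the essential work: it reduces the property of being $1$-unbounded to the condition that, for every $y\in Y$, the Birkhoff sum $n\mapsto \sum_{i=0}^{n-1}\tau(S^i y)$ is unbounded. So the whole proof amounts to verifying this hypothesis using the assumption that $\tau$ is not a coboundary and $(Y,S)$ is minimal.

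First, I would recall the precise formulation of Gottschalk--Hedlund that is needed. For a minimal topological dynamical system $(Y,S)$ and a continuous cocycle $\tau\colon Y\to\R$ (in particular $\tau\colon Y\to\Z$), the theorem states that $\tau$ is a coboundary, i.e.\ $\tau = g - g\circ S$ for some continuous $g\colon Y\to\R$, if and only if there exists some (equivalently, every) point $y\in Y$ such that the sequence of Birkhoff sums $\{\sum_{i=0}^{n-1}\tau(S^i y)\}_{n\in\N}$ is bounded. This is exactly the version cited in the excerpt as \cite[Theorem 2.9.4]{katok_introduction_1995}. Taking the contrapositive: since $\tau$ is assumed not to be a coboundary, the Birkhoff sums must be unbounded at every $y\in Y$.

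Once this is established, the proof concludes in one line by applying \Cref{criterion-1-unbounded} to $(Y,S,\tau)$, which gives that $\tau$ is $1$-unbounded. There is really no obstacle here; the only mild subtlety to flag is that Gottschalk--Hedlund provides the dichotomy ``bounded for some $y$ iff coboundary,'' and to get ``unbounded for \emph{every} $y$'' one uses that in a minimal system the set of $y$ with bounded orbit sums is either all of $Y$ (in which case $\tau$ is a coboundary by the theorem) or is a proper $S$-invariant subset. Since any continuous function on $Y$ whose Birkhoff sums are uniformly bounded on a dense orbit would extend, by continuity and minimality, to give a bounded orbital sum structure everywhere, the non-coboundary assumption forces unboundedness of partial sums at every point, which is exactly the hypothesis needed to invoke \Cref{criterion-1-unbounded}.
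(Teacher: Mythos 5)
Your proposal is correct and is essentially the paper's own argument: the paper also derives the proposition by citing the Gottschalk--Hedlund theorem (in the form that a non-coboundary over a minimal system has unbounded Birkhoff sums at every point) and then applying \Cref{criterion-1-unbounded}. The extra remark you make about upgrading ``bounded for some $y$'' to ``unbounded for every $y$'' is a valid reading of the standard statement of Gottschalk--Hedlund for minimal systems and does not change the route.
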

This result provides a large class of systems where \Cref{corollary} can be applied. We now review one specific example in detail. 
\begin{example}[Deterministic random walk]\label{example-1}
    Let $\mathbb T=\R/\Z$ and let $R_{\alpha}\colon\mathbb T \to \mathbb T$, $x\mapsto x+\alpha$ be an irrational rotation. Let $\tau\colon [0,1)\to \{-1,1\}$ be the map that equals $1$ over $[0,1/2)$ and $-1$ otherwise. The Sturmian subhift of type $(1/2,\alpha)$, which we denote $(Y_\alpha,S_\alpha)$, is the smallest subshift on alphabet $\{-1,1\}$ containing all sequences of the form $(\tau(x+n\alpha))_{n\in\Z}$, $x\in \mathbb T$. This subshift is minimal, uniquely ergodic, has zero topological entropy, and  is measurably isomorphic to $(\mathbb{T},R_\alpha)$ with the Lebesgue measure. 
    
    Consider the continuous cocycle $\tau_{\alpha}\colon Y\to\{-1,1\}$ given by $y\mapsto y(0)$, and consider the family of skew products $S_{\alpha}\rtimes_{\tau_\alpha}T$, where $T$ is an arbitrary topological dynamical system. \Cref{criterion-zero-entropy} shows that all these systems have zero topological entropy (note that $\tau_\alpha$ has zero mean with respect to the unique invariant measure of $S_\alpha$, so by Birkhoff's ergodic theorem the hypothesis in \Cref{criterion-zero-entropy} is met).

    It follows from a classic result of Kesten \cite{kesten_conjecture_1966} that $\tau_\alpha$ is not a coboundary. By \Cref{gottschalk-non-coboundary}, it follows that $\tau_\alpha$ is $1$-unbounded. Then \Cref{thm:entropy} shows that there is a scale $\{a_n(t)\}_{n\in\N,t>0}$ such that $\upperent_{\textbf{a}}(S_{\alpha}\rtimes_{\tau_\alpha}T)=h_{top}(T)$ for every topological dynamical system $(X,T)$. 

    Let us note that the same conclusion can not be obtained by mean of topological entropy dimension, another invariant which has shown to be useful for zero-entropy systems. Indeed, in \cite{dou_entropy_2022} the authors show that there is $r\in\R$ associated to $\alpha$ such that the entropy dimension of $S_{\alpha}\rtimes_{\tau_{\alpha}} T$ equals $r$ for every $T$ with positive topological entropy. This is proved under some hypothesis on $\alpha$, and holds for an uncountable and dense set of $\alpha$'s. 
\end{example}
A simple modification of the last example provides a family of systems where topological sequence entropy has no distinguishing power, but slow entropy does.  
\begin{example}\label{example:slow-entropy-and-sequence-entropy}
    Consider the tuple $(Y_{\alpha},S_{\alpha},\tau_{\alpha})$ be from  \Cref{example-1}, let $Y=Y_{\alpha}\times \{-1,1\}^\Z$, let $S$ be the (componentwise) shift map on $Y_{\alpha}\times \{-1,1\}^\Z$, and define $\tau\colon Y\to\{-1,1\}$ by $\tau(x,y)=\tau_{\alpha}(x)$. We consider the family of systems of the form $S\rtimes_\tau T$, where $(X,T)$ is an arbitrary invertible topological dynamical system.

    We claim that for every sequence $A$ the corresponding invariant of topological sequence entropy  $h_{top}^A(\cdot)$ is constant on this family of systems. Indeed, by \Cref{criterion-zero-entropy} we have $h_{top}(S\rtimes_{\tau}T)=h_{top}(S)=\log(2)$ for every choice of $T$. Then  part (3) of \Cref{goodman-v2} shows that
    \[h^A_{top}(S\rtimes_{\tau}T)=K(A)h_{top}(S\rtimes_\tau T)=K(A)\log(2).\] 
    That is, $h^A_{top}(S\rtimes_{\tau}T)$ has the same value for any choice of $T$. 
    
    On the other hand, \Cref{criterion-1-unbounded} shows that $\tau$ is $1$-unbounded, and then by  \Cref{thm:entropy} there is 
    a scale $\{a_n(t)\}_{n\in\N,t>0}$  such that $\upperent_{\textbf{a}}(S\rtimes_{\tau}T)=h_{top}(T)$ for all $T$.  
\end{example}
In the next result we will verify that our results can be applied to the classical $T,T^{-1}$ system. That is, the shift over $\{-1,1\}^\Z$ and $\tau$ given by $\tau(y)=y(0)$.  
\begin{proposition}\label{fullshift-unbounded}
    Let $Y=\{-1,1\}^\Z$, let $S\colon Y\to Y$ be the shift map, and let $\tau\colon Y\to\Z$ be given by $\tau(y)=y(0)$. Then $\tau$ is $1$-unbounded.
\end{proposition}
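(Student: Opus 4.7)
Since $\tau(y)=y(0)$ depends only on the coordinate at $0$, the constant $s$ from \Cref{constant-s} can be taken to be $0$, so $L_{n,s}(Y)=\{-1,1\}^{n}$ and $|L_{n,s}(Y)|=2^{n}$. For $w\in\{-1,1\}^{n}$, set $S_{i}:=\tau^{i}(w)=\sum_{j=0}^{i-1}w(j)$ for $i\geq 1$ and $S_{0}=0$, so that $(S_{0},\dots,S_{n-1})$ is a trajectory of the simple $\pm 1$ walk on $\Z$ starting at the origin. By definition, $r_{n}(w)$ is the cardinality of the range $\{S_{0},S_{1},\dots,S_{n-1}\}$ of this walk.

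The key observation is that, because each step changes position by exactly $\pm 1$, the set of visited sites is always a set of consecutive integers containing $0$. Thus it has the form $\{-a,-a+1,\dots,b\}$ with $a=-\min_{i}S_{i}\geq 0$ and $b=\max_{i}S_{i}\geq 0$, and $r_{n}(w)=a+b+1$. In particular, $r_{n}(w)\leq N-1$ forces $\max_{0\leq i\leq n-1}|S_{i}|\leq N-2$, and hence the weaker $|S_{n-1}|\leq N-2$. Consequently
\[
\frac{|\{w\in\{-1,1\}^{n}:r_{n}(w)<N\}|}{2^{n}}\leq\frac{|\{w\in\{-1,1\}^{n}:|S_{n-1}(w)|\leq N-2\}|}{2^{n}}.
\]

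It therefore suffices to show that the right-hand side tends to $0$ as $n\to\infty$ for every fixed $N$. Under the uniform measure on $\{-1,1\}^{n}$, $S_{n-1}$ takes the value $k$ (with $k$ of the same parity as $n-1$) with relative frequency $2^{-(n-1)}\binom{n-1}{(n-1+k)/2}$, and Stirling's formula yields $\max_{k}2^{-(n-1)}\binom{n-1}{k}=O(1/\sqrt{n})$. Summing over the at most $2N-3$ admissible values of $k$ with $|k|\leq N-2$ gives the bound $O(N/\sqrt{n})\to 0$. Combining this with the preceding paragraph proves
\[
\lim_{n\to\infty}\frac{|\{w\in\{-1,1\}^{n}:r_{n}(w)\geq N\}|}{2^{n}}=1
\]
for every $N\in\N$, which is stronger than the liminf bound required by \Cref{lambda-unbounded}. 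No serious obstacle is anticipated: the only nonelementary ingredient is the $O(1/\sqrt{n})$ bound on central binomial coefficients, which is entirely classical, and the combinatorial reduction through the walk's continuity is immediate.
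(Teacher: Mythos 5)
Your proof is correct, and it is a legitimate, fully elementary substitute for the paper's argument. The paper's proof sketch is softer: it invokes the fact that the simple random walk is $\mu$-a.e.\ unbounded (via CLT, the law of the iterated logarithm, or recurrence), identifies the proportion of words with $r_n(w)\geq N$ with $\mu(\{y : r_n(y)\geq N\})$, and lets continuity from below push that measure to $1$. You instead exploit the structural observation that the range of a $\pm 1$ walk is an interval of integers containing $0$, so that $r_n(w)<N$ forces the endpoint $S_{n-1}$ into a window of width $O(N)$, and then you bound the probability of that confinement by the local estimate $\max_k 2^{-(n-1)}\binom{n-1}{k}=O(1/\sqrt{n})$. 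What your route buys is an explicit rate, $O(N/\sqrt{n})$, for the convergence of the proportion to $1$, and it avoids any appeal to a.e.\ statements or limit theorems beyond Stirling; what it gives up is generality, since the reduction from confinement of the range to confinement of the endpoint uses that the steps are exactly $\pm 1$, whereas the paper's soft argument would survive for any cocycle whose ergodic sums are a.e.\ unbounded under a fully supported product measure. Both arguments establish the stronger statement that the limit (not just the liminf) of the proportion is $1$, which is what $1$-unboundedness in the sense of \Cref{lambda-unbounded} requires.
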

\begin{proof}[Proof sketch]
    Let $\mu$ be the uniform measure on $\{-1,1\}^\Z$. It is a well-known fact that $n\mapsto \sum_{i=0}^{n-1}\tau(S^iy)$ is unbounded for a subset of $\{-1,1\}^\Z$ of full measure. This follows from the standard central limit theorem, or the law of iterated logarithm, or can be seen as a classic fact about random walks on $\Z$. Furthermore, for any $N\in\N$ the proportion of words $w\in L_n(Y)$ with $r_n(w)\geq N$ is equal to  $\mu(\{y\in Y : r_n(y)\geq N\})$, which converges to $1$ as $n\to\infty$ by the previous observation. 
\end{proof} 
In contrast to the measure-theoretic case, it is easy to observe\footnote{In this case one easily derives from \Cref{prop:the-set-A} that $\max\{h_{top}(T),h_{top}(S)\}\leq h_{top}(S\rtimes_\tau T)\leq h_{top}(T)+h_{top}(S)$.} that with this system in the base, the topological entropy of $S\rtimes_\tau T$ is not independent of $T$, and thus applying \Cref{thm:entropy} is less justified. However, the conclusion of \Cref{thm:slow-entropy} is still interesting if we consider skew products $S\rtimes_\tau T$ where  $T$ ranges over a class of systems with zero topological entropy. In this manner one can find other families of systems where sequence entropy is a trivial invariant (with the same argument from \Cref{example:slow-entropy-and-sequence-entropy}) but slow entropy is not (applying \Cref{thm:slow-entropy}).
\begin{example}
We will consider the class of transformations studied in \cite{kanigowski_slow_2019}, see this reference for undefined terms. Let $X=G/\Gamma$ be the quotient of a connected Lie group $G$ by a co-compact lattice $\Gamma$. Let $\mathfrak{g}$ be a Lie algebra for $G$, let $U\in\mathfrak{g}$ be an ad-quasi-unipotent element, and consider the left multiplication map $T\colon X\to X$ given by  $T(x)=\exp(g)x\Gamma$. The topological slow entropy of $(X,T)$ with scale $b_n(t)=n^t$ is computed in \cite[Theorem 1.10]{kanigowski_slow_2019}. It is an integer number and furthermore it is proved that $\upperent_{\textbf{b}}(T)=\lowerent_{\textbf{b}}(T)$. See the reference for an explicit formula for $\upperent_{\textbf{b}}(T)$ in terms of the chain structure of $U$. We also note that Theorem 1.10 in \cite{kanigowski_slow_2019} refers to the slow entropy of the flow associated to $U$, but the  transformation $T$ has the same slow entropy by \Cref{slow-entropy-of-flows-and-transformations} below. 

   Let $S$ be the shift map on $Y=\{-1,1\}^\Z$ and let $\tau\colon Y\to\Z$, $\tau(y)=y(0)$ for all $y\in Y$. We consider the family of skew products $S\rtimes_\tau T$, with $T$ as in the previous paragraph. It is easy to check that $h_{top}(S\rtimes_\tau T)=h_{top}(S)$ for any choice of $T$, so neither entropy nor sequence entropy provide information on this class of systems (see the argument in \Cref{example:slow-entropy-and-sequence-entropy}). On the other hand, by \Cref{fullshift-unbounded} and \Cref{thm:slow-entropy} there is a scale $c_n(t)$ such that $\upperent_{\textbf{c}}(S\rtimes_\tau T)=\upperent_{\textbf{b}}(T)$ for every choice of $T$.   
\end{example}
In the next result we verify that the slow entropy of a flow equals the slow entropy of its time one map. By a flow we mean a family $(T^{t})_{t\in \R}$ of homeomorphisms of  a compact metric space $X$ such that $(x,t)\mapsto T^{t}(x)$ is continuous as a function of $x\in X$ and $t\in\R$, and which obeys the rule $T^{t+s}(x)=T^{t}(T^{s}(x))$, $s,t\in \R$, $x\in X$. The definitions from \Cref{sec:preliminaries} can be directly adapted to flows by replacing $\{0,\dots,n-1\}$ by a compact interval $I\subset \R$. In this manner one defines Bowen metrics $d^B_I$, the values  $\spa(T,I,\epsilon)$, and the topological slow entropy of a flow. The reader is referred to \cite[Section 1.1]{kanigowski_slow_2019} for more details.
\begin{proposition}\label{slow-entropy-of-flows-and-transformations}
The topological slow entropy of a continuous flow on a compact metric space equals the topological slow entropy of its time one map, for any scale.  
\end{proposition}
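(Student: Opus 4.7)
The plan is to show the identity $\upperent_{\textbf{a}}(T^1) = \upperent_{\textbf{a}}((T^t)_{t\in\R})$ (and its lower-entropy counterpart) by sandwiching the flow spanning numbers between two time-one-map spanning numbers that differ only by a rescaling of $\epsilon$. The mechanism is uniform continuity of the flow map on $X \times [0,1]$.

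First I would invoke that $(t,x) \mapsto T^t x$ is continuous on the compact product $[0,1] \times X$, hence uniformly continuous. This produces a modulus $\delta \colon (0,\infty) \to (0,\infty)$ with $\delta(\epsilon) \to 0^+$ as $\epsilon \to 0^+$, such that $d(x,y) \le \delta(\epsilon)$ implies $d(T^t x, T^t y) \le \epsilon$ for every $t \in [0,1]$. Crucially, the modulus depends only on $\epsilon$, not on any other parameter.

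Next I would establish the two-sided bound
\[
\spa(T^1, \{0,\ldots,n\}, \epsilon) \;\le\; \spa((T^t)_{t\in\R}, [0,n], \epsilon) \;\le\; \spa(T^1, \{0,\ldots,n\}, \delta(\epsilon)),
\]
valid for every $\epsilon > 0$ and $n \in \N$. The left inequality is immediate: restricting any $\epsilon$-spanning set of the flow on $[0,n]$ to integer times produces an $\epsilon$-spanning set of $T^1$ on $\{0,\ldots,n\}$. For the right inequality, if $E$ is $\delta(\epsilon)$-spanning for $T^1$ on $\{0,\ldots,n\}$, then for any $x \in X$ some $y \in E$ satisfies $d(T^i x, T^i y) \le \delta(\epsilon)$ for every $i \in \{0,\ldots,n\}$; writing an arbitrary $t \in [0,n]$ as $t = i + s$ with $i \in \{0,\ldots,n-1\}$ and $s \in [0,1]$, the modulus applied to the pair $(T^i x, T^i y)$ gives $d(T^t x, T^t y) = d(T^s T^i x, T^s T^i y) \le \epsilon$, so $E$ is $\epsilon$-spanning for the flow.

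Inserting the sandwich into the definition of upper slow entropy at fixed resolution yields
\[
\upperent_{\textbf{a}}(T^1, \epsilon) \;\le\; \upperent_{\textbf{a}}((T^t)_{t\in\R}, \epsilon) \;\le\; \upperent_{\textbf{a}}(T^1, \delta(\epsilon)).
\]
Passing to the limit $\epsilon \to 0^+$ and using $\delta(\epsilon) \to 0^+$, both outer terms converge to $\upperent_{\textbf{a}}(T^1)$, so all three agree. Replacing $\limsup$ by $\liminf$ throughout handles $\lowerent_{\textbf{a}}$ verbatim. I do not anticipate a genuine obstacle: this is the slow-entropy analogue of the classical identity $h_{top}(T^1) = h_{top}((T^t)_{t\in\R})$, and the argument adapts directly because the uniform-continuity modulus $\delta(\epsilon)$ is independent of $n$ and slow entropy only feels $\epsilon$ in the limit $\epsilon \to 0^+$.
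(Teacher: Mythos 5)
Your proposal is correct and follows essentially the same route as the paper: sandwich the flow's spanning numbers between those of the time-one map at resolutions $\epsilon$ and $\delta(\epsilon)$, using uniform continuity of $(t,x)\mapsto T^t x$ on $[0,1]\times X$, then let $\epsilon\to 0^+$. Your extraction of a single modulus $\delta(\epsilon)$ valid simultaneously for all $t\in[0,1]$ directly from joint uniform continuity is a mild streamlining of the paper's version of the same step, which instead uses a finite net of times $s_1<\dots<s_k$ with separate moduli $\eta_i$ and the decomposition $s=j+s_i+\alpha$.
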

\begin{proof}
Let $T\colon X\times \R\to X$, $(x,t)\mapsto T^{t}(x)$ be a continuous flow on the compact metric space $X$, and let  $T^1$ be the time-1 map of $T$. It is clear that
\[\spa(T^1,\{0,\dots,n-1\},\epsilon)\leq \spa(T,[0,n-1],\epsilon),\ \ \  \epsilon>0, n\in\N.\]
We claim that for every $\epsilon>0$ we can find $\epsilon'>0$ smaller than $\epsilon$ and with the property
\[\spa(T,[0,n-1],\epsilon)\leq \spa(T,\{0,\dots,n-1\}, \epsilon'),\ \ \  n\in\N.\]
The claim follows directly from these two inequalities and the definitions.

Let $\epsilon>0$, and consider the restriction of $(T^t)_{t\in\R}$ to a function $X\times [0,1]\to X$. Since $T^t(x)$ is continuous as a function of $t$ and $x$, and $X\times [0,1]$ is compact, the restriction is uniformly continuous, and thus we can find $\delta>0$ such that for every $x,x'\in X$, $t,t'\in [0,1]$, we have that $\max\{d(x,x'),|t-t'|\}<\delta$ implies $d(T^t(x),T^{t'}(x'))<\epsilon$. After doing this, choose elements $0=s_1<s_2<\dots<s_k=1$ such that $|s_i-s_{i+1}|<\delta$ for every $i\in\{1,\dots,k\}$. For every $s_i$ we consider the time $s_i$ map $T^{s_i}\colon X\to X$. For every $i=1,\dots,k$ the map $T^{s_i}\colon X\to X$ is uniformly continuous, and thus we can find  $\eta_i$ with the property that for all $x,y\in X$, $d(x,y)<\eta_i$ implies $d(T^{s_i}(x),T^{s_i}(y))<\delta$. 

We claim that  $\epsilon'=\min\{\eta_1,\dots,\eta_k,\epsilon/2\}$ satisfies $\spa(T,[0,n-1],\epsilon)\leq \spa(T,\{0,\dots,n-1\}, \epsilon')$ for all $n\in\N$. It suffices to take an arbitrary pair of elements $x,y\in X$ with $d^B_{\{0,\dots,n-1\}}(x,y)<\epsilon'$, and prove $d^B_{[0,n-1]}(x,y)<\epsilon$. We take an arbitrary element $s\in [0,n-1]$ verify that  $d(T^{s}(x),T^{s}(Y))<\epsilon$. For this we write $s$ as $s=j+s_i+\alpha$, where $i,j\in\{0,\dots,n-1\}$, $s_i\in [0,1]$, and $|\alpha|<\delta$. 
     Since $d^B_{\{0,\dots,n-1\}}(x,y)<\epsilon'$ by assumption, we have  $d(T^j(x),T^j(y))<\epsilon$ for every $j=1,\dots,k$, and this implies  $d(T^j(x),T^j(y))<\eta_i$. Since  $T^j(x),T^j(y)$ are $\eta_i$-close, their images by $T^{s_i}$, $T^{j+s_i}(x),T^{j+s_i}(y)$ must be $\delta$-close. Since $|\alpha|<\delta$, it follows that their images by $T^{\alpha}$, $T^{j+s_i+\alpha}(x),T^{j+s_i+\alpha}(y)$, are $\epsilon$ close. That is, $d(T^{s}(x),T^{s}(y))<\epsilon$. 
\end{proof}

\bibliographystyle{abbrv}
\bibliography{references}

\begin{thebibliography}{10}

\bibitem{aaronson_relative_2012}
J.~Aaronson.
\newblock Relative complexity of random walks in random sceneries.
\newblock {\em The Annals of Probability}, 40(6), Nov. 2012.
\newblock arXiv:1001.1433 [math].

\bibitem{abramov_entropy_1965}
L.~Abramov and V.~A. Rokhlin.
\newblock The entropy of a skew product of measure-preserving transformations.
\newblock {\em Amer. Math. Soc. Transl}, 48:225--245, 1965.

\bibitem{austin_scenery_2014}
T.~Austin.
\newblock Scenery entropy as an invariant of {RWRS} processes, May 2014.
\newblock arXiv:1405.1468 [math].

\bibitem{ball_entropy_2003}
K.~Ball.
\newblock Entropy and $\sigma$-algebra equivalence of certain random walks on
  random sceneries.
\newblock {\em Israel Journal of Mathematics}, 137(1):35--60, Dec. 2003.

\bibitem{ceccherini-silberstein_cellular_2010}
T.~Ceccherini-Silberstein and M.~Coornaert.
\newblock {\em Cellular automata and groups}.
\newblock Springer {Monogr}. {Math}. Springer, Berlin, 2010.

\bibitem{canovas_interval_2000}
J.~S. Cánovas.
\newblock An {Interval} {Counterexample} on {Topological} {Sequence} {Entropy}.
\newblock {\em Acta Mathematica Hungarica}, 88(1):123--131, Aug. 2000.

\bibitem{den_hollander_mixing_1997}
F.~den Hollander and J.~E. Steif.
\newblock Mixing properties of the generalized ${T,T^{-1}}$ process.
\newblock {\em Journal d’Analyse Mathematique}, 72(1):165--202, Dec. 1997.

\bibitem{dolgopyat_flexibility_2022}
D.~Dolgopyat, C.~Dong, A.~Kanigowski, and P.~Nándori.
\newblock Flexibility of statistical properties for smooth systems satisfying
  the central limit theorem.
\newblock {\em Inventiones mathematicae}, 230(1):31--120, Oct. 2022.

\bibitem{dolgopyat_mixing_2022}
D.~Dolgopyat, C.~Dong, A.~Kanigowski, and P.~Nándori.
\newblock Mixing properties of generalized ${T,T^{-1}}$ transformations.
\newblock {\em Israel Journal of Mathematics}, 247(1):21--73, Apr. 2022.

\bibitem{dou_entropy_2022}
D.~Dou and K.~K. Park.
\newblock Entropy dimension for deterministic walks in random sceneries.
\newblock {\em Ergodic Theory and Dynamical Systems}, 42(6):1908--1925, June
  2022.

\bibitem{kolyada_shearers_2016}
T.~Downarowicz, B.~Frej, and P.-P. Romagnoli.
\newblock Shearer’s inequality and infimum rule for {Shannon} entropy and
  topological entropy.
\newblock In S.~Kolyada, M.~Möller, P.~Moree, and T.~Ward, editors, {\em
  Contemporary {Mathematics}}, volume 669, pages 63--75. American Mathematical
  Society, Providence, Rhode Island, 2016.

\bibitem{eberlein_topological_nodate}
E.~Eberlein.
\newblock On topological entropy of semigroups of commuting transformations.

\bibitem{einsiedler_ergodic_2011}
M.~Einsiedler and T.~Ward.
\newblock {\em Ergodic {Theory}}.
\newblock Springer London, London, 2011.

\bibitem{gao_sequence_2020}
J.~Gao, H.~Xiaojun, and C.~Zhu.
\newblock Sequence entropies and mean sequence dimension for amenable group
  actions.
\newblock {\em Journal of Differential Equations}, 269(11):9404--9431, Nov.
  2020.

\bibitem{goodman_topological_1974}
T.~N.~T. Goodman.
\newblock Topological {Sequence} {Entropy}.
\newblock {\em Proceedings of the London Mathematical Society},
  s3-29(2):331--350, 1974.
\newblock \_eprint:
  https://onlinelibrary.wiley.com/doi/pdf/10.1112/plms/s3-29.2.331.

\bibitem{heicklen_entropy_2000}
D.~Heicklen, C.~Hoffman, and D.~J. Rudolph.
\newblock Entropy and {Dyadic} {Equivalence} of {Random} {Walks} on a {Random}
  {Scenery}.
\newblock {\em Advances in Mathematics}, 156(2):157--179, Dec. 2000.

\bibitem{hollander_random_2006}
F.~d. Hollander and J.~E. Steif.
\newblock Random walk in random scenery: {A} survey of some recent results.
\newblock pages 53--65. 2006.

\bibitem{hric_topological_nodate}
R.~Hric.
\newblock Topological sequence entropy for maps of the circle.

\bibitem{huang_combinatorial_2009}
W.~Huang and X.~Ye.
\newblock Combinatorial lemmas and applications to dynamics.
\newblock {\em Advances in Mathematics}, 220(6):1689--1716, Apr. 2009.

\bibitem{hulse_counterexamples_2009}
P.~Hulse.
\newblock Counterexamples to the product rule for entropy.
\newblock {\em Dynamical Systems}, 24(1):81--95, Mar. 2009.
\newblock Publisher: Taylor \& Francis \_eprint:
  https://doi.org/10.1080/14689360802448586.

\bibitem{kalikow_t_1982}
S.~A. Kalikow.
\newblock T, {T} -1 {Transformation} is {Not} {Loosely} {Bernoulli}.
\newblock {\em The Annals of Mathematics}, 115(2):393, Mar. 1982.

\bibitem{kanigowski_slow_2018}
A.~Kanigowski.
\newblock Slow entropy for some smooth flows on surfaces.
\newblock {\em Israel Journal of Mathematics}, 226(2):535--577, June 2018.

\bibitem{gogolev_survey_2024}
A.~Kanigowski, A.~Katok, and D.~Wei.
\newblock Survey on {Entropy}-{Type} {Invariants} of {Subexponential} {Growth}
  in {Dynamical} {Systems}.
\newblock In A.~Gogolev, B.~Hasselblatt, D.~Damjanovic, and Y.~Pesin, editors,
  {\em A {Vision} for {Dynamics} in the 21st {Century}: {The} {Legacy} of
  {Anatole} {Katok}}, pages 232--289. Cambridge University Press, Cambridge,
  2024.

\bibitem{kanigowski_slow_2022}
A.~Kanigowski, P.~Kunde, K.~Vinhage, and D.~Wei.
\newblock Slow entropy of higher rank abelian unipotent actions.
\newblock {\em Journal of Modern Dynamics}, 18(0):575--607, Dec. 2022.

\bibitem{kanigowski_slow_2019}
A.~Kanigowski, K.~Vinhage, and D.~Wei.
\newblock Slow {Entropy} of {Some} {Parabolic} {Flows}.
\newblock {\em Communications in Mathematical Physics}, 370(2):449--474, Sept.
  2019.

\bibitem{katok_smooth_nodate}
A.~Katok.
\newblock Smooth {Non}-{Bernoulli} {K}-{Automorphisms}.

\bibitem{katok_lyapunov_1980}
A.~Katok.
\newblock Lyapunov exponents, entropy and periodic orbits for diffeomorphisms.
\newblock {\em Publications Mathématiques de l'IHÉS}, 51:137--173, 1980.

\bibitem{AIHPB_1997__33_3_323_0}
A.~Katok and J.-P. Thouvenot.
\newblock Slow entropy type invariants and smooth realization of commuting
  measure-preserving transformations.
\newblock {\em Annales de l'I.H.P. Probabilités et statistiques},
  33(3):323--338, 1997.
\newblock Publisher: Gauthier-Villars tex.mrnumber: 1457054 tex.zbl:
  0884.60009.

\bibitem{katok_introduction_1995}
A.~B. Katok and B.~Hasselblatt.
\newblock {\em Introduction to the modern theory of dynamical systems}.
\newblock Number v. 54 in Encyclopedia of mathematics and its applications.
  Cambridge University Press, Cambridge ; New York, NY, USA, 1995.

\bibitem{kerr_ergodic_2016}
D.~Kerr and H.~Li.
\newblock {\em Ergodic {Theory}: {Independence} and {Dichotomies}}.
\newblock Springer {Monographs} in {Mathematics}. Springer International
  Publishing : Imprint: Springer, Cham, 1st ed. 2016 edition, 2016.

\bibitem{kesten_conjecture_1966}
H.~Kesten.
\newblock On a conjecture of {Erdös} and {Szüsz} related to uniform
  distribution mod 1.
\newblock {\em Acta Arithmetica}, 12(2):193--212, 1966.

\bibitem{kushnirenko_metric_1967}
A.~G. Kushnirenko.
\newblock On metric invariants of entropy type.
\newblock {\em Russian Mathematical Surveys}, 22(5):53, Oct. 1967.
\newblock Publisher: IOP Publishing.

\bibitem{lemanczyk_sequence_1985}
M.~Lemańczyk.
\newblock The sequence entropy for {Morse} shifts and some counterexamples.
\newblock {\em Studia Mathematica}, 82(3):221--241, 1985.

\bibitem{misiurewicz_short_nodate}
M.~Misiurewicz.
\newblock A short proof of the variational principle for a {Z}+{N} action on a
  compact space.

\bibitem{petersen_ergodic_1983}
K.~E. Petersen.
\newblock {\em Ergodic {Theory}}.
\newblock Cambridge {Studies} in {Advanced} {Mathematics}. Cambridge University
  Press, Cambridge, 1983.

\bibitem{rudolph_asymptotically_1988}
D.~J. Rudolph.
\newblock Asymptotically {Brownian} skew products give non-loosely
  {BernoulliK}-automorphisms.
\newblock {\em Inventiones Mathematicae}, 91(1):105--128, Feb. 1988.

\bibitem{walters_introduction_2000}
P.~Walters.
\newblock {\em An introduction to ergodic theory}.
\newblock Number~79 in Graduate texts in mathematics. Springer, New York, 1.
  softcover printing edition, 2000.

\end{thebibliography}
\end{document}